\setlist[enumerate,1]{label=(\roman*)}
\newtheorem{theorem}{Theorem}
\newtheorem{lemma}[theorem]{Lemma}
\newtheorem{proposition}[theorem]{Proposition}
\newtheorem{corollary}[theorem]{Corollary}
\newtheorem{remark}[theorem]{Remark}
\newcommand\mb{\mathbf}
\newcommand\mc{\mathcal}
\newcommand\R{\mb{R}}
\newcommand{\esp}{\mb{E}}
\newcommand{\ud}{\mathrm{d}}
\title{On the quadratic barycentric transport problem}
\date{\today}
\author{Nathael Gozlan}
\address{N. G. : Université Paris Cité, CNRS, MAP5, F-75006 Paris}
\email{nathael.gozlan@u-paris.fr}
\author{Thibaut Le Gouic}
\address{T. Le G. : Centrale Marseille, I2M, UMR 7373, CNRS, Aix-Marseille univ., Marseille, 13453, France}
\email{thibaut.le\_gouic@math.cnrs.fr}
\author{Paul Marie Samson}
\address{P.-M. S. : Univ Gustave Eiffel, UPEM, Univ Paris Est Creteil, CNRS, F-77447 Marne-la-Vallée, France}
\email{paul-marie.samson@univ-eiffel.fr}
\keywords{Weak Optimal transport, Convex Order, Martingales, Benamou-Brenier Formula}
\subjclass{60A10, 49J55, 60G42}
\thanks{The authors are supported by a grant of the Agence nationale de la recherche (ANR), Grant ANR-23-CE40-0017 (Project SOCOT)}
\begin{document}

\begin{abstract}
We investigate the structure of optimal transport plans, dual optimizers, and geodesic paths for the quadratic barycentric transport problem.
\end{abstract}
\maketitle

\section{Introduction}
Let $\mu,\nu$ be two probability measures on $\R^d$ with finite second moments.
The optimal quadratic barycentric transport cost between $\mu$ and $\nu$ is defined by
\[
\overline{\mc{T}}_2(\nu|\mu):=\inf_{X\sim \mu, Y\sim \nu}\esp(\|\esp (Y-X|X)\|^2)
\]
where the infimum runs over all couples of random vectors $(X,Y)$ with $X \sim \mu$ and $Y\sim \nu$, and where in all the paper $\|\,\cdot\,\|$ denotes the standard Euclidean norm on $\R^d.$ This variant of the Monge-Kantorovich transport problem has been first considered in \cite{gozlan2017kantorovich} and \cite{GRST18} in the context of the concentration of measure phenomenon for convex functions. It has then been involved in several directions such as: an approach to the Caffarelli contraction theorem \cite{gozlan2020mixture, FGP20} ; a notion of Wasserstein barycenter \cite{NEURIPS2021_70d5212d}; a connection with the question of geodesic extrapolation in the Wasserstein space \cite{GNT25}. The transport cost $\overline{\mc{T}}_2$ enters the more general family of weak optimal transport problems also introduced in \cite{gozlan2017kantorovich}. See also \cite{ABC19, BBP19} for general results on this problem, and \cite{BP-survey, Beiglbock-Fundamental} for an up-to-date presentation of the different applications of the weak optimal transport problem. 

The aim of this paper is to explore further the properties of barycentric optimal transport and, in particular, to obtain an alternative formulation for $\overline{\mc{T}}_2$ in the spirit of the Benamou-Brenier formula \cite{Benamou-Brenier} for the classical Wasserstein distance $W_2$ defined by
\[
W_2^2(\mu,\nu) = \inf_{X\sim \mu,Y\sim \nu}\esp(\|Y-X\|^2).
\]
To state our main results (gathered in Theorem \ref{thm:BBbar} below), let us fix some sufficiently rich filtered probability space $(\Omega,\mathcal{F},\mathbb{P})$ (rich enough to support the existence of a Brownian motion).
As we shall prove,
\begin{equation}\label{eq:BBbar-intro}
\overline{\mc{T}}_2(\nu|\mu)=\inf\mathbb{E}\int_0^1\|v_t\|^2\ud t,
\end{equation}
where the infimum is taken over all progressively measurable processes  $(v_t)_{t\in [0,1]}$  with the following constraints: there exists an $\mathcal{F}_0$-measurable random vector $X_0\sim \mu$ and an $\mc{F}$-martingale $(M_t)_{t\in [0,1]}$ such that the process 
\[
X_t = X_0+\int_0^t v_s\,\ud s +M_t-M_0,\qquad t\in [0,1],
\]
or for short $\ud X_t = v_t\,\ud t+ \ud M_t$ satisfies  that $X_1$ has law $\nu$. Formula \eqref{eq:BBbar-intro} shows that the barycentric quadratic transport problem is a particular case of the so called semimartingale transport problem introduced by Tan and Touzi \cite{Tan-Touzi} (a framework extending \cite{Mikami, Mikami-Thieullen} ; see also \cite{BP-survey} for the link with weak optimal transport). In comparison, for the $W_2$ distance and assuming that $\mu$ is absolutely continuous with respect to Lebesgue, it holds
\begin{equation}\label{eq:BB-intro}
W_2^2(\mu,\nu)=\inf\mathbb{E}\int_0^1\|v_t\|^2\ud t,
\end{equation}
where the infimum runs this time over all $(v_t)_{t\in [0,1]}$ such that there exists $X_0 \sim \mu$ with $X_0+\int_0^1 v_s\,ds \sim \nu$.

More can be said about the optimizers in \eqref{eq:BBbar-intro}. Recall that, according to \cite{gozlan2020mixture}, the transport cost $\overline{\mc{T}}_2(\nu|\mu)$ admits the following interpretation:
\[
\overline{\mc{T}}_2(\nu|\mu) = \inf_{\eta \leq_c \nu} W_2^2(\mu,\eta),
\]
where $\leq_c$ denotes the convex order between probability measures ($\eta \leq_c \nu$ means that $\int h\,\ud \eta \leq \int h\,\ud \nu$, for all convex function $h$ on $\R^d$). Moreover the infimum is reached at a unique point $\bar{\mu} \leq_c\nu$ called the backward projection of $\mu$ onto the convex set of measures dominated by $\nu$ in the convex order (a notion also studied in depth in \cite{Alfonsi-Jourdain, Kim-Ruan}). Furthermore, there exists a continuously differentiable convex function $\varphi:\R^d \to \R$ such that $\nabla \varphi_\#\mu=\bar{\mu}$. With these notions, equality in \eqref{eq:BBbar} is achieved for any process of the form
\begin{equation}\label{eq:Xopt-intro}
X_t= (1-t)X_0+t\nabla \varphi(X_0) + M_t-\nabla\varphi(X_0),\qquad t\in [0,1]
\end{equation}
(for which  $v_t=\nabla \varphi(X_0)-X_0$, $t\in [0,1]$) with
 $(M_t)_{t\in [0,1]}$ any $\mathcal{F}$-martingale such that $M_0=\nabla\varphi(X_0)$ a.s. and $M_1\sim \nu$ (and such martingales exist). In the case of the $W_2$ distance, and assuming that $\mu$ is absolutely continuous with respect to Lebesgue, it is well known that the unique optimizer in \eqref{eq:BB-intro} is given by 
\begin{equation}\label{eq:McCann}
X_t= (1-t)X_0+tT(X_0),\qquad t\in [0,1]
\end{equation}
where $T:\R^d\to\R^d$ is the Brenier transport map \cite{Brenier1, Brenier2} sending $\mu$ onto $\nu$ and is defined, for almost every $x$, by $T(x)=\nabla F(x)$ for some convex function $F$. The path $\mu_t^{W_2} = \mathrm{Law}((1-t)X_0+tT(X_0))$, $t\in [0,1]$, is known as McCann's interpolation between $\mu$ and $\nu$ \cite{McCann} and is actually a constant speed geodesic in the sense that
\[
W_2(\mu_s^{W_2},\mu_t^{W_2}) = |t-s|W_2(\mu,\nu),\qquad \forall s,t \in [0,1].
\]
Something similar also holds in the barycentric framework: denoting by $\mu_t$ the law of $X_t$ defined at \eqref{eq:Xopt-intro}, one gets
\begin{equation}\label{eq:geodbar-intro}
\overline{\mc{T}}_2(\mu_t|\mu_s) = (t-s)^2 \overline{\mc{T}}_2(\nu|\mu),\qquad \forall 0\leq s\leq t \leq 1.
\end{equation}

A very classical property of McCann's interpolation \eqref{eq:McCann} is the following non-crossing property of trajectories: if for some $t\in (0,1)$, $(1-t)x_0+tT(x_0) = (1-t)x_0'+tT(x_0')$, then $x_0=x_0'$. A simple probabilistic consequence of this non-crossing property is that the process $X$ defined at \eqref{eq:McCann} is a (time inhomogenous) Markov process. As we shall see, the same is true in the barycentric case: if $X$ is defined by \eqref{eq:Xopt-intro} with $M$ being a Markovian martingale, then $X$ is also a Markov process. 

To prove that $X$ is Markov under the assumptions above, a crucial step consists in proving that the knowledge of $X_t$, for a given $t$, gives access to the drift $X_0-\nabla \varphi(X_0)$ (in other words, this drift is $\sigma(X_t)$-measurable) and then to $M_t$, which is the only thing that is needed to predict the future evolution of the process. We give two different proofs of this fact. The first proof relies on the construction of a paving of the space $\R^d$ into convex cells that trap all the martingales $M$ connecting $\bar{\mu}$ to $\nu$. The existence of this martingale invariant convex paving, reminiscent of a result by De March and Touzi \cite{DeMT19}, is obtained in Section  \ref{sec:paving} by projecting the flat pieces of the graph of a convex optimizer for the dual problem for $\overline{\mc T_2}(\nu|\mu)$. A second proof of the $\sigma(X_t)$-measurability of $X_0-\nabla \varphi(X_0)$ is sketched in Remark \ref{rem:alternative} and relies on the analysis of the equality cases in \eqref{eq:geodbar-intro}.

The paper is organized as follows. Section \ref{Sec:Dualpot} recalls several known results about barycentric quadratic optimal transport that will be used in the paper. We will recall in particular the dual formulations of $\overline{\mc{T}}_2$, the existence of dual optimizers, and the notions of backward and forward projections. Section \ref{sec:paving} will focus on the martingales transporting $\bar{\mu}$ to $\nu$. Using a solution of the dual problem, we will construct a convex paving of $\R^d$ that is stable for all martingales sending $\bar{\mu}$ to $\nu$. Section \ref{sec:benamou} will contain the proof of our main result Theorem \ref{thm:BBbar}. The proof of the Markov property of the optimal processes $X$ mentioned above will rely on the convex paving obtained in the preceding section. Section \ref{sec:forward-proj} will focus on the martingales sending $\mu$ to one of its forward projections $\tilde{\nu}$ (whose definition will be recalled in Section \ref{Sec:Dualpot}). We will in particular establish a simple correspondence between martingales joining $\mu$ to  $\tilde{\nu}$ and those joining $\bar{\mu}$ to $\nu$. Finally, Section \ref{sec:Gaussian} will treat the case where $\mu$ and $\nu$ are Gaussian distributions of a certain type. We will describe in this case the backward projection $\bar{\mu}$, which will turn out to be also Gaussian, and the $\overline{\mc{T}}_2$-geodesics between $\mu$ and $\nu$.

\medskip

\textbf{Note.} During the preparation of this paper, we learned about a recent preprint by Alfonsi and Jourdain \cite{Alfonsi-Jourdain-preprint} that contains a full description of the backward and forward projections in the Gaussian case. Propositions \ref{Gauss} and \ref{prop-Gaussian-Commute} independently obtained in Section \ref{sec:Gaussian} of the present paper are particular cases of their Proposition 3.1 and Theorem 4.1.

\section{Dual potentials, backward and forward projections}\label{Sec:Dualpot}
This section collects several known results on the quadratic barycentric transport problem and fixes notation that will be used in the remainder of the paper. 

In all the paper, we will denote by $\mathcal{P}_2(\R^d)$, the Wasserstein space of order $2$, i.e. the space of probability measures with a finite second moment. In all the section, $\mu,\nu$ will be two given elements of  $\mathcal{P}_2(\R^d)$.

\subsection*{Quadratic barycentric transport problem.}
The quadratic barycentric optimal transport problem between $\mu$ and $\nu$ can be restated as follows:
\begin{equation}\label{eq:barT-coupling}
\overline{\mathcal{T}}_2(\nu|\mu) = \inf_{\pi \in \Pi(\mu,\nu)} \int \left\|x- \int y\,\ud\pi_x(y)\right\|^2\,\ud \mu(x),
\end{equation}
where we recall that $\|\,\cdot\,\|$ denotes the standard Euclidean norm on $\R^d$, and where $\Pi(\mu,\nu)$ is the set of all transport plans between $\mu$ and $\nu$ (that is, the set of probability distributions on $\R^d\times \R^d$ that admit $\mu$ as first marginal and $\nu$ as second). For $\pi \in \Pi(\mu,\nu)$, we denote by $(\pi_x)_{x\in \R^d}$ the conditional disintegration of $\pi$ with respect to its first marginal, which is such that
\[
\ud \pi(x,y) = \ud\mu(x)\ud\pi_x(y).
\]

\subsection*{Dual formulations and dual optimizers.} According to \cite{gozlan2017kantorovich}, the following Kantorovich type duality formula holds:
\begin{equation}\label{eq:duality}
    \frac{1}{2}\overline{\mathcal{T}}_2(\nu|\mu) = \sup_{f}\left\{\int Q_2f\,\ud\mu - \int f\,\ud \nu\right\},
\end{equation}
where the supremum runs over the set of all convex functions $f:\R^d\to \R\cup\{+\infty\}$, and where 
\[
Q_2f(x) = \inf_{y\in \R^d} \left\{f(y)+\frac{1}{2}\|x-y\|^2\right\},\qquad x\in \R^d.
\]
Moreover, according to \cite[Theorem 6.1]{gozlan2020mixture}, there exists a lower semicontinuous convex function $\bar{f}: \R^d \to \R\cup\{+\infty\}$, integrable with respect to $\nu$, achieving equality in \eqref{eq:duality}. 

It will be useful to consider a slightly different Kantorovich dual problem involving the operator $P_2$ given by
\[
P_2g(y):=\sup_{x\in \R^d}\left\{g(x)-\frac12 \|x-y\|^2\right\},\qquad y\in \R^d,
\]
for any function $g:\R^d\to \R$.
The following result appeared in \cite{Kim-Ruan}.
\begin{lemma}\label{dualitybis}
    For any probability measures  $\mu$ and $\nu$ in $\mathcal{P}_2(\R^d)$, one has 
    \begin{equation}\label{eq:dualitybis}
    \frac{1}{2}\overline{\mathcal{T}}_2(\nu|\mu) = \sup_{g}\left\{\int g\,\ud \mu - \int P_2g\,\ud\nu\right\},
\end{equation}
where the supremum runs over the set of all convex functions $g:\R^d\to \R$.
Moreover, optimizers of \eqref{eq:duality} and \eqref{eq:dualitybis} are related as follows:
\begin{itemize}
    \item if $f$ is an optimizer of \eqref{eq:duality} ($f \neq \infty$) then $g =Q_2f$ is an optimizer of \eqref{eq:dualitybis},
    \item conversely,  if $g$ is an optimizer of \eqref{eq:dualitybis}, then $f =P_2g$ is an optimizer of \eqref{eq:dualitybis}.
\end{itemize}
\end{lemma}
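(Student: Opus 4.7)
My plan is to pass between the two dual formulations via a Fenchel--Moreau correspondence between the operators $Q_2$ and $P_2$. Completing the square inside each definition gives the Legendre-type identities
\[
Q_2 f(x) = \tfrac12\|x\|^2 - \bigl(f+\tfrac12\|\cdot\|^2\bigr)^{\!*}(x), \qquad P_2 g(y) = \bigl(\tfrac12\|\cdot\|^2-g\bigr)^{\!*}(y) - \tfrac12\|y\|^2,
\]
where $(\cdot)^{*}$ denotes the standard Legendre transform. Two consequences drive everything. First, since $f+\tfrac12\|\cdot\|^2\in\Gamma_0(\R^d)$ whenever $f$ is convex, lsc and proper, the biconjugate theorem yields $P_2 Q_2 f = f$. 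Second, substituting $g=g^{**}$ for real-valued convex $g$ and rearranging, one obtains $P_2 g = (g^{*}-\tfrac12\|\cdot\|^2)^{\!*}$, exhibiting $P_2 g$ itself as a Legendre transform; in particular $P_2 g$ is always convex.

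The inequality $\tfrac12\overline{\mc T}_2(\nu|\mu)\leq\sup_g\{\int g\,\ud\mu-\int P_2 g\,\ud\nu\}$ is then immediate by substitution: for any admissible $f$ in \eqref{eq:duality}, set $g:=Q_2 f$, which is convex and real-valued as an infimal convolution of two convex functions; the identity $P_2 g=f$ turns $\int Q_2 f\,\ud\mu-\int f\,\ud\nu$ into $\int g\,\ud\mu-\int P_2 g\,\ud\nu$, and passing to the supremum yields the bound.

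The main work is the reverse inequality, because the naive pointwise bound $g(x)-P_2 g(y)\leq \tfrac12\|x-y\|^2$ only gives the weaker estimate $\tfrac12 W_2^2(\mu,\nu)$. I would instead go back to the primal coupling formulation \eqref{eq:barT-coupling}: for a convex $g$ and any $\pi\in\Pi(\mu,\nu)$ with barycenter map $b_\pi(x):=\int y\,\ud\pi_x(y)$, pick a subgradient $v_x\in\partial g(x)$. Inserting the subgradient inequality $g(z)\geq g(x)+\langle v_x,z-x\rangle$ inside the sup defining $P_2 g(y)$ and maximizing the resulting quadratic in $z$ (optimum at $z=y+v_x$) yields the pointwise bound
\[
P_2 g(y) \geq g(x) + \langle v_x,\,y-x\rangle + \tfrac12\|v_x\|^2.
\]
Integrating first against $\ud\pi_x(y)$ (so that $y$ is replaced by $b_\pi(x)$), then against $\ud\mu(x)$, rearranging, and applying the elementary one-variable maximum $\langle v_x,w\rangle-\tfrac12\|v_x\|^2\leq\tfrac12\|w\|^2$ with $w=x-b_\pi(x)$, I get $\int g\,\ud\mu-\int P_2 g\,\ud\nu\leq\tfrac12\int\|x-b_\pi(x)\|^2\,\ud\mu$. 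Minimizing over $\pi$ establishes \eqref{eq:dualitybis}.

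The correspondence of optimizers then drops out. If $f$ optimizes \eqref{eq:duality}, then $g:=Q_2 f$ attains the common value in \eqref{eq:dualitybis} by the equality above. Conversely, if $g$ optimizes \eqref{eq:dualitybis}, then $f:=P_2 g$ is convex by the first step, and the one-line pointwise bound $Q_2 P_2 g\geq g$ (take $z=x$ in the sup defining $P_2 g(y)$, then infimum in $y$) forces $\int Q_2 f\,\ud\mu-\int f\,\ud\nu\geq\int g\,\ud\mu-\int P_2 g\,\ud\nu=\tfrac12\overline{\mc T}_2(\nu|\mu)$, so $f$ optimizes \eqref{eq:duality}. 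The only anticipated technicalities are routine bookkeeping on integrability and measurability (finiteness and $\nu$-integrability of $P_2 g$, $\mu$-a.e.\ existence of subgradients, Fubini), standard for real-valued convex functions of at most quadratic growth on $\R^d$.
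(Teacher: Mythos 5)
Your proof is correct, and the forward inequality and the optimizer correspondence are handled essentially as in the paper (the paper uses $P_2Q_2f\leq f$ rather than the full biconjugate identity $P_2Q_2f=f$, but this is immaterial). The genuine divergence is in the reverse inequality $\sup_g\bigl\{\int g\,\ud\mu-\int P_2g\,\ud\nu\bigr\}\leq\tfrac12\overline{\mc T}_2(\nu|\mu)$. The paper's argument is purely algebraic: for any convex $g$ it sets $f:=P_2g$, uses $g\leq Q_2P_2g$ to get $\int g\,\ud\mu-\int P_2 g\,\ud\nu\leq\int Q_2f\,\ud\mu-\int f\,\ud\nu$, and then simply invokes \eqref{eq:duality} again. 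You instead bypass \eqref{eq:duality} entirely for this direction and reprove weak duality from scratch against the primal formulation \eqref{eq:barT-coupling}, via the subgradient estimate $P_2g(y)\geq g(x)+v_x\cdot(y-x)+\tfrac12\|v_x\|^2$ followed by the elementary bound $v\cdot w-\tfrac12\|v\|^2\leq\tfrac12\|w\|^2$. Both are valid; the paper's chain is shorter and leans more heavily on the already-established duality \eqref{eq:duality}, while yours is more self-contained and has the side benefit of re-exhibiting the mechanism behind weak duality, at the cost of a measurable-selection step for $x\mapsto v_x\in\partial g(x)$ and slightly more care with integrability. Two small remarks: your justification that $P_2g$ is convex via the identity $P_2g=(g^*-\tfrac12\|\cdot\|^2)^*$ is cleaner than what one might first try (each $y\mapsto g(x)-\tfrac12\|x-y\|^2$ is concave, not convex, so $P_2g$ is not literally a supremum of convex functions); and to conclude that $f=P_2g$ is an optimizer of \eqref{eq:duality} in the converse direction you do, of course, still need the upper bound from \eqref{eq:duality}, which you use implicitly.
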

Note that since \cite[Theorem 6.1]{gozlan2020mixture}) ensures the existence of an optimizer in \eqref{eq:duality} which is $\nu$-integrable, Lemma \ref{dualitybis} provides the existence of an optimizer in \eqref{eq:dualitybis}. For the sake of completeness, we include the short proof of this result.
\begin{proof}
This lemma is a simple consequence of the following observations. First, if $f$ is convex, then $Q_2f$ is also convex as an infimum convolution of two convex functions, and if $g$ is convex, then $P_2g$ is a convex function as a supremum of convex functions. Secondly, for any functions $f$ and $g$, one has $Q_2 f\leq P_2 Q_2 f\leq f$ and $P_2g\geq Q_2P_2 g\geq g$. 

Let $\bar f:\R^d\to \R\cup\{+\infty\}$, $\bar f\neq +\infty$, be an optimizer in \eqref{eq:duality} and set $\bar g =Q_2\bar f$. By choosing $y_0$ such that $\bar f(y_0)<+\infty$, one has $\bar g(x)\leq \bar f(y_0)+ \frac12 \|x-y_0\|^2<+\infty$ for any $x\in\R^d$.  Moreover,
the following inequalities lead to \eqref{eq:dualitybis} and the optimality of the convex function $\bar g:\R^d\to \R$ in \eqref{eq:dualitybis}:
\begin{align*}
     &\frac{1}{2}\overline{\mathcal{T}}_2(\nu|\mu)=\int Q_2\bar f\,\ud\mu - \int \bar f\,\ud\nu
     \leq \int \bar g\,\ud\mu - \int P_2 \bar g\,\ud\nu
     \leq \sup_{g \ \mathrm{cvx}}\left\{\int g\,\ud\mu - \int P_2g\,\ud\nu\right\}
\\&\leq \sup_{g \ \mathrm{cvx}}\left\{\int P_2 g\,\ud\mu - \int Q_2(P_2g)\,\ud\nu\right\}
\leq \sup_{f \ \mathrm{cvx}}\left\{\int f\,\ud\mu - \int Q_2f\,\ud\nu\right\}=\frac{1}{2}\overline{\mathcal{T}}_2(\nu|\mu).
\end{align*}

Conversely, if $\tilde g:\R^d\to \R$ is an optimizer in \eqref{eq:dualitybis}, then setting  $\tilde f=P_2\tilde g$, the following inequalities lead to the optimality of the lower semi-continuous convex function  $\tilde f:\R^d\to \R\cup\{+\infty\}$ in \eqref{eq:duality}
\[\frac{1}{2}\overline{\mathcal{T}}_2(\nu|\mu)=\int \tilde g\,\ud\mu - \int P_2\tilde g\,\ud\nu
     \leq \int Q_2\tilde f\,\ud\mu - \int \tilde f\,\ud\nu
     \leq \frac{1}{2}\overline{\mathcal{T}}_2(\nu|\mu).\]
\end{proof}

Note that solutions of the dual problems \eqref{eq:duality} and \eqref{eq:dualitybis} are not unique (even up to constants). In all what follows, we will therefore fix some particular optimizer $\bar{f}$ of \eqref{eq:duality} and set $\bar{g}:=Q_2\bar{f}$. According to the preceding Lemma~\ref{dualitybis}, $\bar{g}$ is an optimizer of \eqref{eq:dualitybis}. Moreover, the convex function $\bar{f}$ can be recovered using the relation $\bar{f}=P_2\bar{g}$ (we refer to \cite[Proposition 2.3]{GRS14} for this classical fact). 

\subsection*{Optimal transport plans and backward projection.} The dual optimizer $\bar{f}$ enables one to describe the optimal transport between $\mu$ and $\nu$. We refer to \cite[Theorem 1.2]{gozlan2020mixture} for all the results stated in this paragraph (see also \cite{BBP19} for alternative proofs). Indeed, let us consider the convex function $\varphi:\R^d \to \R$ defined by 
\begin{equation}\label{eq:varphi}
    \varphi=\left(\bar{f} + \frac{\|\,\cdot\,\|^2}{2}\right)^*.
\end{equation}
This function $\varphi$ is continuously differentiable on $\R^d$ and the map $\nabla \varphi$ is 1-Lipschitz on $\R^d$ . Moreover, the probability measure $\bar{\mu} := \nabla \varphi_\#\mu$ is such that $\bar{\mu} \leq_c \nu$, where $\leq_c$ represents the convex order relation, and is such that 
\begin{equation}\label{eq:GJ20}
\overline{\mc{T}_2}(\nu|\mu)=W_2^2(\mu,\bar{\mu}) = \inf_{\eta \leq_c \nu}W_2^2(\mu,\eta).
\end{equation}
The probability $\bar{\mu}$ is actually the unique minimizer of the right-hand side, and, following the terminology of \cite{Kim-Ruan}, will be called the backward Wasserstein projection of $\mu$ onto the convex set of probability measures dominated by $\nu$ in the convex order, or simply the backward projection of $\mu$ on $\nu$ in the convex order.

Moreover, a transport plan $\pi \in \Pi(\mu,\nu)$ is optimal for $\overline{\mathcal{T}}_2(\nu|\mu)$ if, and only if, it is of the following form
\begin{equation}\label{eq:decomposition}
\ud\pi(x,y) = \ud\mu(x)\ud q_{\nabla \varphi(x)}(y)
\end{equation}
where $q$ is an arbitrary martingale probability kernel transporting $\bar{\mu}$ onto $\nu$, that is $q$ is an arbitrary probability kernel such that $\int q_z(\,\cdot\,)\ud\bar{\mu}(z) = \nu(\,\cdot\,)$ and $\int y\,\ud q_z(y)=z$ for $\bar{\mu}$ almost every $z \in \R^d$. Since $\bar{\mu}\leq_c \nu$, such martingale kernels exist according to the classical characterization of the convex order due to Strassen \cite{Strassen}. We will denote by $\mathcal{M}(\bar{\mu},\nu)$ the set of these martingale kernels. In other words, \eqref{eq:decomposition} means that optimal transport from $\mu$ to $\nu$ are the composition of a deterministic coupling from $\mu$ to $\bar{\mu}$ (along the map $\nabla \varphi$) followed by an arbitrary martingale coupling from $\bar{\mu}$ to $\nu.$

The following lemma will play an important role in Section \ref{sec:paving}.
\begin{lemma}\label{lem:equality-barf}
With the notation introduced above, it holds $\int \bar{f}\,\ud\bar{\mu}=\int \bar{f}\,\ud\nu.$
\end{lemma}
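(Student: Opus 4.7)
The plan is to combine the duality identity
$$\frac{1}{2}\overline{\mathcal{T}}_2(\nu|\mu) = \int \bar{g}\,\ud\mu - \int \bar{f}\,\ud\nu$$
coming from the optimality of $\bar f$ (Lemma~\ref{dualitybis}) with a direct expression for $\int \bar g\,\ud\mu$ in terms of $\bar f$ and $\bar\mu$. Note that the inequality $\int\bar f\,\ud\bar\mu\leq\int\bar f\,\ud\nu$ is automatic from $\bar\mu\leq_c\nu$ and the convexity of $\bar f$, so only the reverse inequality has real content; in fact both will come out simultaneously as an equality.

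The key step is to rewrite $\bar g = Q_2\bar f$ through $\varphi$. From the definition $\varphi = (\bar f + \|\cdot\|^2/2)^*$ in \eqref{eq:varphi}, completing the square gives
$$\bar g(x) = Q_2\bar f(x) = \frac{\|x\|^2}{2} - \varphi(x), \qquad x\in\R^d.$$
Since $\varphi$ is differentiable on $\R^d$, the supremum defining $\varphi(x)$ is attained at $y = \nabla\varphi(x)$, so the Fenchel--Young equality
$$\varphi(x) + \bar f(\nabla\varphi(x)) + \frac{1}{2}\|\nabla\varphi(x)\|^2 = \langle x,\nabla\varphi(x)\rangle$$
holds pointwise. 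Combining these two identities yields
$$\bar g(x) = \bar f(\nabla\varphi(x)) + \frac{1}{2}\|x-\nabla\varphi(x)\|^2, \qquad x\in\R^d.$$

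Integrating against $\mu$ and using $\bar\mu = \nabla\varphi_\#\mu$ gives
$$\int\bar g\,\ud\mu = \int\bar f\,\ud\bar\mu + \frac{1}{2}\int\|x-\nabla\varphi(x)\|^2\,\ud\mu(x).$$
The decomposition \eqref{eq:decomposition} of any optimal plan shows that the coupling $(X,\nabla\varphi(X))$ with $X\sim\mu$ realizes the conditional barycenter in \eqref{eq:barT-coupling}, whence $\int\|x-\nabla\varphi(x)\|^2\,\ud\mu(x) = \overline{\mathcal{T}}_2(\nu|\mu)$. Plugging this into the duality identity above produces $\int\bar f\,\ud\bar\mu = \int\bar f\,\ud\nu$.

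The only bookkeeping point is the well-definedness of $\int\bar f\,\ud\bar\mu$: bounding $\bar f$ from below by an affine function $\ell$, the nonnegative convex function $\bar f - \ell$ is integrable against $\nu$ (since $\bar f$ is, by \cite{gozlan2020mixture}), hence against $\bar\mu$ by the convex order $\bar\mu\leq_c\nu$; and $\ell$ is $\bar\mu$-integrable as $\bar\mu \in \mathcal{P}_2(\R^d)$. I do not see any genuine obstacle beyond this.
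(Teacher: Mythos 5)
Your proof is correct and follows essentially the same route as the paper: both start from the dual optimality identity $\tfrac12\overline{\mathcal{T}}_2(\nu|\mu)=\int Q_2\bar f\,\ud\mu-\int\bar f\,\ud\nu$, evaluate $Q_2\bar f(x)$ at $y=\nabla\varphi(x)$, and use $\nabla\varphi_\#\mu=\bar\mu$ together with $W_2^2(\mu,\bar\mu)=\overline{\mathcal{T}}_2(\nu|\mu)$. The only (cosmetic) improvement is that you invoke the Fenchel--Young equality to see that $Q_2\bar f(x)=\bar f(\nabla\varphi(x))+\tfrac12\|x-\nabla\varphi(x)\|^2$ holds exactly, whereas the paper uses only the ``$\leq$'' from the infimum and then closes the squeeze with the convex-order inequality $\int\bar f\,\ud\bar\mu\leq\int\bar f\,\ud\nu$, which your argument renders unnecessary.
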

\proof
According to \eqref{eq:duality} and using that $\bar{\mu} \leq_c \nu$, it holds
\begin{align*}
        \frac{1}{2}\overline{\mathcal{T}}_2(\nu |\mu)=\int Q_2\bar{f}\,\ud\mu - \int \bar{f}\,\ud\nu & \leq \int \bar{f}(\nabla \varphi (x)) + \frac{1}{2}\|x-\nabla \varphi(x)\|^2\,\ud\mu(x) - \int f\,\ud\nu\\
        & = \int \bar{f}\,\ud\bar{\mu} + \frac{1}{2}W_2^2(\mu,\bar{\mu}) - \int \bar{f}\,\ud\nu\\   
        & \leq \frac{1}{2}W_2^2(\mu,\bar{\mu})=\frac{1}{2}\overline{\mathcal{T}}_2(\nu |\mu).
    \end{align*}
    Therefore, there is equality everywhere, and so $\int \bar{f}\,\ud\bar{\mu}=\int \bar{f}\,\ud\nu.$
\endproof

\subsection*{Forward projections.}
Following \cite{Alfonsi-Jourdain} and \cite{Kim-Ruan}, we will say that $\tilde{\nu}$ is a forward Wasserstein projection of $\nu$ on the convex cone of measures dominating $\mu$ for the convex order (or simply a forward projection of $\nu$ on $\mu$ in the convex order) if
\[
\tilde{\nu} \in \mathrm{Argmin}_{\mu \leq_c \eta} W_2(\eta,\nu).
\]
As shown in \cite[Theorem 4.1]{Alfonsi-Jourdain} where this notion first appeared, minimizers always exist and are unique if $\nu$ is absolutely continuous with respect to Lebesgue or if $d=1$. Explicit examples where the forward projection is not unique can be found in \cite{Alfonsi-Jourdain-preprint}.  Remarkably, this problem is also related to the quadratic barycentric optimal transport by the following identity first obtained in \cite[Corollary 4.4]{Alfonsi-Jourdain} (see also \cite[Theorem 8.3]{Kim-Ruan}) : 
\begin{equation}\label{eq:AJ}
\overline{\mathcal{T}}_2(\nu|\mu)= W_2^2(\tilde{\nu},\nu),
\end{equation}
for any forward projection $\tilde{\nu}$ of $\nu$ on $\mu$ in the convex order.
Moreover, we have the following result slightly generalizing observations of \cite{Kim-Ruan}.
\begin{lemma}\label{lem:forward-proj}
Let $\tilde{\nu}$ be a forward projection of $\nu$ on $\mu$ in the convex order. Then $\int \bar{g}\,\ud\tilde{\nu} = \int \bar{g}\,\ud\mu$. Moreover, if $\pi$ is an optimal coupling for $W_2^2(\tilde{\nu},\nu)$, then $x\in \partial \varphi^*(y)$ for $\pi$ almost every $(x,y)\in \R^d.$ In particular, if $\nu$ is absolutely continuous with respect to Lebesgue, $\tilde{\nu}$ is the image of $\nu$ under the map $\nabla \varphi^*$ (well defined Lebesgue almost everywhere).
\end{lemma}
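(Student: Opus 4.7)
I would derive all three assertions simultaneously from a pinched chain of inequalities at the value $\tfrac12\overline{\mathcal T}_2(\nu|\mu)$, and then read off the equality cases. The three ingredients are: (a) since $\bar g$ is convex and $\mu\leq_c\tilde\nu$, one has $\int \bar g\,\ud\mu\leq\int \bar g\,\ud\tilde\nu$; (b) the pointwise bound $\bar g(x)-P_2\bar g(y)\leq\tfrac12\|x-y\|^2$, which is built into the definition of $P_2$, integrated against any $W_2$-optimal coupling $\pi$ of $(\tilde\nu,\nu)$ gives $\int \bar g\,\ud\tilde\nu-\int P_2\bar g\,\ud\nu\leq\tfrac12 W_2^2(\tilde\nu,\nu)$; (c) by \eqref{eq:AJ} and the optimality of $\bar g$ in \eqref{eq:dualitybis}, the right-hand side equals $\tfrac12\overline{\mathcal T}_2(\nu|\mu)=\int \bar g\,\ud\mu-\int P_2\bar g\,\ud\nu$. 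Chaining (a)--(c) forces equality throughout, and the equality between the first two terms is exactly the identity $\int \bar g\,\ud\tilde\nu=\int \bar g\,\ud\mu$.

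For the subgradient assertion, the equality in step (b) means that $\bar g(x)-P_2\bar g(y)=\tfrac12\|x-y\|^2$ holds $\pi$-almost everywhere. Using the algebraic identity $\bar g(x)-\tfrac12\|x-y\|^2=\langle x,y\rangle-\bigl(\tfrac12\|x\|^2-\bar g(x)\bigr)-\tfrac12\|y\|^2$, together with $\bar f=P_2\bar g$ and the identification $\varphi^*=\bar f+\tfrac12\|\cdot\|^2$ (which follows from \eqref{eq:varphi} by biconjugation, since $\bar f+\tfrac12\|\cdot\|^2$ is lower semicontinuous and convex), the saturated equality rewrites as
\[
\varphi^*(y)=\langle x,y\rangle-h(x), \qquad h(z):=\tfrac12\|z\|^2-\bar g(z).
\]
Since $\varphi^*(y')=\sup_{z\in\R^d}\{\langle z,y'\rangle-h(z)\}\geq \langle x,y'\rangle-h(x)$ for every $y'$ with equality at $y'=y$, the affine map $y'\mapsto\langle x,y'\rangle-h(x)$ is a supporting affine function of the convex function $\varphi^*$ at $y$, which yields $x\in\partial\varphi^*(y)$.

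For the final assertion, if $\nu$ is absolutely continuous with respect to Lebesgue then the convex function $\varphi^*$ is differentiable Lebesgue-a.e., hence $\nu$-a.e., so $\partial\varphi^*(y)=\{\nabla\varphi^*(y)\}$ for $\nu$-a.e. $y$. The previous step then forces $x=\nabla\varphi^*(y)$ $\pi$-a.e.; projecting this coupling on the first coordinate gives $\tilde\nu=\nabla\varphi^*_\#\nu$.

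The only step requiring real care is the transition from the saturated $P_2$-identity to the subgradient relation $x\in\partial\varphi^*(y)$: this is where the correspondence $\varphi^*=\bar f+\tfrac12\|\cdot\|^2$ is essential, as it converts the $P_2$-Fenchel-Young equality into a genuine Legendre-type equality that witnesses membership in the subdifferential of $\varphi^*$.
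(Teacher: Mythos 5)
Your proposal is correct and follows essentially the same route as the paper: it pins the same chain of inequalities at the value $\tfrac12\overline{\mathcal T}_2(\nu|\mu)$ using $\mu\leq_c\tilde\nu$, the $P_2$-Fenchel--Young inequality integrated against $\pi$, and \eqref{eq:AJ} with dual optimality of $\bar g$, then reads off the equality cases. The only difference is cosmetic: the paper deduces the subgradient relation from the saturated identity $\varphi(x)+\varphi^*(y)=x\cdot y$ and cites it as classical, whereas you unpack the same equality into the supporting-affine-function characterization of $\partial\varphi^*(y)$ via $h=\varphi$ and $h^*=\varphi^*$.
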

For the sake of completeness, we recall the short proof of this result.
\proof
According to \eqref{eq:AJ} and Lemma \ref{dualitybis},
\[
\frac{1}{2}W_2^2(\tilde{\nu},\nu) = \frac{1}{2}\overline{\mathcal{T}}_2(\nu|\mu) = \int \bar{g}\,\ud\mu - \int P_2\bar{g}\,\ud\nu \leq \int \bar{g}\,\ud\tilde{\nu} - \int P_2\bar{g}\,\ud\nu \leq \frac{1}{2}W_2^2(\tilde{\nu},\nu),
\]
and so there is equality everywhere. In particular, the couple $(\bar{g},P_2\bar{g})$ is a dual optimizer for $\frac{1}{2}W_2^2(\tilde{\nu},\nu)$ which classically yields 
\[
  \bar{g}(x)-P_2\bar{g}(y) = \frac{1}{2}\|x-y\|^2
\]
  for $\pi$ almost every $(x,y)\in \R^d.$ Since $\bar{g}=\frac{\|\,\cdot\,\|^2}{2}-\varphi$ and $P_2\bar{g}=\varphi^*-\frac{\|\,\cdot\,\|^2}{2}$, we get that $ \varphi(x)+\varphi^*(y)=x\cdot y$ for $\pi$ almost all $(x,y)\in \R^d$ which completes the proof.
\endproof

\section{Dual potentials, convex paving and martingale couplings}\label{sec:paving}
The aim of this section is to associate to  an optimizer $\bar{f}$ of the dual problem \eqref{eq:duality} a convex paving denoted $\{D_{\bar{f}}(z)\}_{z\in \R^d}$ whose cells are stable by all martingales joining $\bar{\mu}$ to $\nu$, in the following sense : if a martingale $(M_t)_{t\in [0,1]}$ is such that $M_0 \sim \bar{\mu}$ and $M_1\sim\nu$, then, for all $t\in [0,1]$, $M_t\in \overline{D_{\bar{f}}(M_0)}$ almost surely.

\subsection{Convex paving associated to a convex function}
Let $f:\R^d\to \R\cup\{+\infty\}$ be a lower semi-continuous convex function whose domain is denoted $\mathrm {Dom}(f):=\{z\in \R^d:\bar f(z)<+\infty\}$ and is assumed to be non-empty.

For all $z\in \mathrm{Dom}(f)$, consider the convex set
\begin{equation}\label{defD_f}
   D_{f}(z) :=  \left\{y\in \R^d : \exists \varepsilon >0, \forall \lambda\in(-\varepsilon,1+\varepsilon), f\big((1-\lambda)z+\lambda y)\big)= (1-\lambda) f (z) +\lambda  f(y)\right\}. 
\end{equation}

Lemma \ref{lemtopo} below gathers useful properties and alternative characterizations of the family of sets $\{D_{f}(z)\}_{z\in \mathrm{Dom}(f)}$. Before stating this result, one needs to recall some definitions.
\begin{itemize}
\item The closure of a set $A\subset \R^m$ will be denoted by $\overline{A}$.
\item For any set $A \subset \R^m$, the relative face of a point $a\in A$ is defined by 
\begin{align}\label{def-rf_a}
\mathrm{rf}_a(A)&:=\big\{b\in A: \big(a-\varepsilon (b-a),b+\varepsilon (b-a)\big)\subset A \mbox{ for some } \varepsilon>0\big\}\nonumber\\
&=\big\{b\in A: \mbox{ for some } \varepsilon>0, \forall \lambda\in[-\varepsilon, 1+\varepsilon], (1-\lambda)a +\lambda b\in A \big\}
\end{align}
The set $\mathrm{rf}_a(A)$ is equal to the only relative interior of a face of $A$ containing $a$ (see Section 3.1 of \cite{DeMT19}). For any convex set $A$, the family $\{\mathrm{rf}_a(A):a\in A\}$  is known to be a partition of $A$ into convex subsets (see, e.g., \cite[Theorem 18.2]{rockafellar2015convex}).
\item The epigraph of $f$ is the set $E_f \subset \R^d \times \R$ defined by
\begin{equation}\label{defepi}
    E_{ f}:=\{(z,k)\in \R^d\times \R: k\geq f (z)\}.
\end{equation}
The set $E_{f}$ is a closed convex set of $\R^d\times \R$ since $f$ is a lower semi-continuous convex function. 
\item The subgradient of $f$ at the point $z\in  \mathrm{Dom}(f)$ is defined by
\[
\partial f(z):=\{ a\in \R^d:  f(y)\geq  f(z)+a\cdot(y-z)\mbox{ for any } y\in \R^d\}.
\]
\item For any $z\in \mathrm{Dom}(f)$, consider
\begin{equation}\label{defC_f}
    C_{f}(z):=\bigcap_{a\in \partial f(z)}C_f^a(z), \quad \mbox{with} \quad C_f^a(z):=\big\{y\in \R^d : f(y) = f(z)+a\cdot (y-z)\big\}.
\end{equation}
\end{itemize}

With the notation and definitions introduced above, we have the following result.

\begin{lemma}\label{lemtopo} \ 
\begin{enumerate}
\item For any $z\in \mathrm{Dom}( f)$, $C_f(z)$ is a convex subset of $\R^d$ that contains $z$.
We also have 
\[C_f(z)=\big\{y\in \mathrm{Dom}( f): \partial f(z)\subset \partial f(y)\big\},\]
and for any $y\in C_f(z)$,  $C_f(y)\subset C_f(z)$.

\item
    The following assertions are equivalent : for any $y,z \in \mathrm{Dom}( f)$,
    \[\begin{array}{ll}a)\quad y\in\mathrm{proj}_{\R^d}\big(\mathrm{rf}_{(z, {f}(z))} E_f\big),\qquad~ &b)\quad y\in \mathrm{rf}_z C_f(z),\\
    c)\quad  y\in D_f(z), \quad &d) \quad \mathrm{rf}_{(z,{f}(z))} E_f=\mathrm{rf}_{(y, {f}(y))} E_f,
    \end{array}\]
     where $\mathrm{proj}_{\R^d}(y,k):=y$ for any $(y,k)\in \R^d\times \R$. 
    \item For any $z\in \mathrm{Dom}(  f)$, 
    \[D_f(z)=\mathrm{proj}_{\R^d}\big(\mathrm{rf}_{(z,{f}(z))} E_f\big)=\mathrm{rf}_z C_f(z).  \]
    \item The family of sets $\{D_f(z) : z\in \mathrm{Dom}(  f)\}$ forms a partition of $\mathrm{Dom}( f)$ into convex subsets.
    \item For any $z\in \mathrm{Dom}( f)$ and $y\in D_f(z)$, $\partial  f(y)=\partial f(z)$. 
\end{enumerate}
\end{lemma}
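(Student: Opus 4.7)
The plan is to first settle (1), then establish the circle of equivalences in (2), and finally deduce (3)--(5) as fairly direct consequences. For (1), each set $C_f^a(z) = \{y : (f - a\cdot)(y) = (f - a\cdot)(z)\}$ is the level set at its minimum value of a convex function, hence convex, and so is the intersection $C_f(z)$. The characterization $C_f(z) = \{y \in \mathrm{Dom}(f) : \partial f(z) \subset \partial f(y)\}$ follows by a direct check: if $y \in C_f^a(z)$ and $a \in \partial f(z)$, then for any $w\in\R^d$, $f(w) \geq f(z) + a\cdot(w-z) = f(y) + a\cdot(w-y)$, so $a \in \partial f(y)$; conversely, if $a \in \partial f(z) \cap \partial f(y)$, writing the two subgradient inequalities at $y$ and $z$ forces $f(y) = f(z) + a\cdot(y-z)$. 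The inclusion $C_f(y) \subset C_f(z)$ for $y \in C_f(z)$ is then immediate from $\partial f(z) \subset \partial f(y) \subset \partial f(w)$ for $w \in C_f(y)$.

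The heart of the proof is (2). The implication (c) $\Rightarrow$ (a) is trivial (take the lift $k = f(y)$). For (a) $\Rightarrow$ (c), I would introduce $g(\lambda) := f((1-\lambda)z + \lambda y)$, which is convex in $\lambda$, and let $h(\lambda) := g(\lambda) - (1-\lambda)f(z) - \lambda k$. By (a) there is $\varepsilon > 0$ with $h \leq 0$ on $[-\varepsilon, 1+\varepsilon]$, while $h(0) = 0$; a standard one-dimensional lemma says a convex function that attains its maximum at an interior point is constant, so $h \equiv 0$ on the whole interval. Evaluating at $\lambda = 1$ yields $k = f(y)$ and the affine identity $g(\lambda) = (1-\lambda)f(z) + \lambda f(y)$, i.e.\ (c). The equivalence (a) $\Leftrightarrow$ (d) then follows from the general fact that the relative faces of a convex set partition it: the argument above has just shown that whenever $(y,k) \in \mathrm{rf}_{(z,f(z))} E_f$ we must have $k = f(y)$, so $(y, f(y))$ and $(z, f(z))$ lie in the same relative face of $E_f$. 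For (c) $\Leftrightarrow$ (b), unfolding the definitions: $y \in \mathrm{rf}_z C_f(z)$ means $(1-\lambda)z + \lambda y \in \bigcap_{a \in \partial f(z)} C_f^a(z)$ on $[-\varepsilon, 1+\varepsilon]$, which by the defining equation of $C_f^a(z)$ and the value at $\lambda = 1$ is exactly the affine identity of (c); conversely, starting from (c), for any $a \in \partial f(z)$ the subgradient inequality $f(w) \geq f(z) + a\cdot(w-z)$ combined with the affine behavior of $f$ along the segment (and its extrapolation) forces equality everywhere, yielding $w = (1-\lambda)z + \lambda y \in C_f^a(z)$.

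Part (3) is now a direct restatement of the equivalence (a) $\Leftrightarrow$ (b) $\Leftrightarrow$ (c). For (4), the partition property of $\{D_f(z)\}_{z \in \mathrm{Dom}(f)}$ comes from (d): the relation ``$\mathrm{rf}_{(y,f(y))} E_f = \mathrm{rf}_{(z,f(z))} E_f$'' is an equivalence relation whose classes project, via $\mathrm{proj}_{\R^d}$ and the identity in (3), onto the sets $D_f(z)$, which are convex as projections of the (convex) relative faces. Finally (5) combines (2) and (1): by (b), $y \in D_f(z)$ gives $y \in C_f(z)$, hence $\partial f(z) \subset \partial f(y)$; by the symmetry in (d), one also has $z \in D_f(y)$, so $\partial f(y) \subset \partial f(z)$, whence equality. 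The main technical obstacle is the step (a) $\Rightarrow$ (c), which rests on the one-dimensional convexity lemma about an interior maximum; once that is in place, the rest is bookkeeping.
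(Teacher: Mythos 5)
Your proposal is correct and follows essentially the same route as the paper's proof: (1) via the subgradient characterization, the equivalence cycle in (2) with the observation that the lift $k$ must equal $f(y)$, and (3)--(5) deduced as consequences of the partition of $E_f$ into relative faces. The only notable stylistic difference is that for (a)$\Rightarrow$(c) you invoke the one-dimensional fact that a convex function with an interior maximum is constant, whereas the paper reaches the same conclusion by sandwiching with the subgradient inequality and passing through (b) first; both are sound and of comparable length.
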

The third item helps in particular to visualize how the sets $D_f(z)$ are obtained: this family of sets corresponds to the projection on $\R^d$ of the relative faces of the convex set $E_f\subset \R^d\times \R
$ contained in the graph of $f$. The fact that $\{D_f(z)\}_{z\in \mathrm{Dom}(f)}$ is a partition of $\mathrm{Dom}(f)$ is in particular a direct consequence. The alternative description of $D_f(z)$ involving $C_f(z)$ will be useful to show that the paving $\{D_{\bar f}(z) : z\in \mathrm{Dom}(  \bar f)\}$ is left stable (up to closure) by all martingales interpolating between $\bar \mu$ and $\nu$.

\begin{proof}[Proof of Lemma \ref{lemtopo}] We start with the proof of statement $(i)$. For any $z\in \mathrm{Dom}(  f)$, the subset $C_f(z)$ is  convex and contains $z$  as  intersection of the convex sets $C_f^a(z)$, $a\in \partial  f (z)$, that contain $z$.  Indeed, for any $y,y'\in C_f^a(z)$ and any $\lambda\in[0,1]$, one has 
\[  f\big((1-\lambda)y+\lambda y'\big)\leq (1-\lambda)  f(y)+\lambda f(y')=  f(z)+a\cdot ((1-\lambda)y+\lambda y'-z)\leq  f\big((1-\lambda)y+\lambda y'\big),\]
where the first inequality follows from the convexity property of $ f$ and the second inequality is due to the definition of $\partial f(z)$. This implies  that $(1-\lambda)y+\lambda z\in C_f^a(z)$. 
For the second part of statement $(i)$, for clarity let us denote
\[\widetilde C_f(z):=\big\{y\in \mathrm{Dom}( f): \partial f(z)\subset \partial f(y)\big\},\qquad z\in \mathrm{Dom}(  f).\]
In order to prove that $C_f(z)\subset \widetilde C_f(z)$, let $y\in C_f(z)$ and $a\in \partial f(z)$. We want to  show that $a \in \partial f(y)$. Since $y\in C_f(z)$, it holds $f(y)=f(z)+a\cdot (y-z)$. So, for all $v\in \R^d$, we have
\[
f(v) \geq f(z) + a\cdot (v-z) = f(z)+a\cdot (y-z) + a\cdot (v-y) = f(y) + a\cdot (v-y)
\]
which shows that $a$ belongs to $\partial{f}(y)$.
Conversely, let $y\in \widetilde C_f(z)$. Since $\partial f(z)\subset \partial f(y)$, according to the definition of $\partial f(y)$, one has for all $a\in\partial f(z)\subset \partial f(y)$,
\[f(z)\geq f(y)+a\cdot (z-y).\]
But the converse inequality also holds from the definition of $\partial f(z)$. Therefore equality holds for any $a\in \partial f(z)$, which means that $y\in C_f(z)$. 
The last point of statement $(i)$ is an easy consequence of the characterization of $C_f(z)$ given by  $\widetilde C_f(z)$.

We now turn to the proof of statement $(ii)$.  First,
$d)\implies a)$ is obvious since according to $d)$, one has $(y, f(y))\in \mathrm{rf}_{(z,{f}(z))} E_f$. Let us show that $a)\implies b)\implies c)\implies d)$.

If $a)$ holds, then  there exists $k\in \R$ such that $(y,k)\in \mathrm{rf}_{(z,{f}(z))} E_f$. According to the definition \eqref{def-rf_a}, for some $\varepsilon >0$ and  for all $\lambda \in[-\varepsilon, 1+\varepsilon]$, $(1-\lambda)(z, f(z))+\lambda(y,k)\in E$.
As a consequence, if $a \in \partial f(z)$, then one has for all $\lambda \in[-\varepsilon, 1+\varepsilon]$,
\begin{equation}\label{transit1}(1-\lambda)  f(z)+\lambda k\geq f\big((1-\lambda)z+\lambda y\big)\ge  f(z)+\lambda a \cdot(y-z).
\end{equation}
It follows that for some positive and some negative values of $\lambda$, $\lambda(k- f(z))\geq \lambda a\cdot(y-z)$. Therefore for any $a\in \partial  f (z)$ 
\[ f(y)- f(z)\leq k- f(z)=a\cdot(y-z)\leq  f(y)- f(z),\]
where the first inequality holds since $(y,k)\in E_f$, and the second inequality is due to the definition of $\partial f(z)$. 
Thus we get  $k= f(y)=f(z)+a\cdot (y-z)$ for all $a\in\partial f(z)$. Plugging these equalities  in the left-hand side of \eqref{transit1} provides  for all $\lambda \in [-\varepsilon, 1+\varepsilon]$
\[ f(z)+\lambda a\cdot (y-z)\geq (1-\lambda) f(z)+\lambda  f(y)\geq  f\big((1-\lambda)z+\lambda y\big)\ge f(z)+\lambda a\cdot (y-z),\]
and therefore 
\begin{equation}\label{transit3}
 f\big((1-\lambda)z+\lambda y\big)=  f(z)+\lambda a\cdot (y-z), \quad \forall \lambda \in [-\varepsilon, 1+\varepsilon], \quad \forall a\in \partial  f(z).
\end{equation}
These last property is exactly  $b)$, namely it means $y\in \mathrm{rf}_z C_f(z)$.

Starting  from $b)$, for $\lambda =1$ \eqref{transit3} gives $ f(y)=f(z)+a\cdot (y-z)$ for some $a\in \partial  f (z)$ and therefore also 
\[ f\big((1-\lambda)z+\lambda y\big)= (1-\lambda) f(z)+
\lambda  f(y), \quad \forall \lambda \in [-\varepsilon, 1+\varepsilon],\]
This means that $c)$ holds, namely $y\in D_f(z)$.

Observing that $c)$ obviously implies $(y, f(y))\in \mathrm{rf}_{(z,{f}(z))} E_f$, we get  $d)$, namely  
$\mathrm{rf}_{(z,{f}(z))} E_f=\mathrm{rf}_{(y,{f}(y))} E_f$, since the family $\{\mathrm{rf}_{(z,k)} E_f:(z,k)\in E_f\} $ is a partition of the boundary of the closed convex set $E_f$ (see e.g. \cite[Theorem 18.2]{rockafellar2015convex}). This ends the proof of statement $(ii)$.

Statement $(iii)$ is a consequence of statement $(ii)$, since $d)$ also implies $a)$ with an interchange of the role of $z$ and $y$. 

Let us prove statement $(iv)$. According to statement $(iii)$, the convexity of the set $D_f( z)$ easily follows either from the convexity property of $\mathrm{rf}_{(z,{f}(z))} E_f$ or the one of $\mathrm{rf}_{z} C_f(z)$.  Since $z\in D_f(z)$ for any $z\in \mathrm{Dom}(  f)$, we also get that the family of sets $\{D_f(z):z\in \mathrm{Dom}(  f)\}$  is a partition of $\R^d$ observing that for $z,z'\in\mathrm{Dom}(f)$, $y\in D_f(z)\cap D_f(z')$ implies $\mathrm{rf}_{(z,{f}(z))} E_f=\mathrm{rf}_{(y,{f}(y))} E_f=\mathrm{rf}_{(z',{f}(z'))} E_f$ (according to statement $(ii)$). Therefore statement $(iii)$ gives  $D_f(z)=D_f(z')$.

Let us finally prove statement $(v)$. If $y\in D_f(z)$ then statements $(iii)$ and  $(iv)$ ensure that  $z,y\in D_f(z)=D_f(y)\subset C_f(z)\cap C_f(y)$. It follows from statement $(i)$ that  $\partial f (y)=\partial  f (z)$. This ends the proof of Lemma~\ref{lemtopo}.
\end{proof}

\subsection{Application to martingale-invariant paving}
The construction performed in the preceding subsection can be used to obtain martingale invariant convex paving, as shows the following general result.

\begin{proposition}\label{prop:partitionbis}  
Let $\alpha,\beta$ be two probability measures on $\R^d$ with a finite first moment and such that $\alpha \leq_c \beta$. Suppose that $\int f\,\ud\alpha = \int f\,\ud \beta \in \R$ for some convex lower semicontinuous function $f:\R^d \to \R \cup \{+\infty\}$, then the following properties hold.
\begin{enumerate}
    \item For all $q \in \mathcal{M}(\alpha,\beta)$, $q_z\big(\overline{D_{ f}(z)}\big)=1$ for $\alpha$ almost all $z \in \mathrm{Dom}(f)$.
    \item If $(M_t)_{t\in [0,1]}$ is a martingale with a.s. right continuous trajectories such that $M_0 \sim \alpha$ and $M_1 \sim \beta$, then with probability $1$, for all $t\in [0,1]$, $M_t \in \overline{D_{f}(M_0)}.$
    \item For any $z\in  \mathrm{Dom}( f)$, 
    \[
    \beta\big(\overline{D_{ f}(z)}\big)=\alpha\big(\overline{D_{ f}(z)}\big)+\int_{\R^d\setminus \overline{D_{f}(z)}} q_y\big(\overline{D_{ f}(z)}\cap\big(\overline{D_{f}(y)}\setminus D_{ f}(y)\big) \big)\, \ud\alpha(y).
    \]
    Moreover, if $\beta$ is absolutely continuous, then for any $z\in  \mathrm{Dom}( f)$, $\displaystyle\beta\big(\overline{D_{ f}(z)}\big)=\alpha\big(\overline{D_{ f}(z)}\big)$. 
\end{enumerate}
\end{proposition}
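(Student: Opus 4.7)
My plan is to prove (i) via the rigidity of Jensen's inequality, deduce (ii) from (i) by applying it at each intermediate time and using right-continuity, and obtain (iii) from a direct decomposition combined with a sub-face property extracted from Lemma~\ref{lemtopo}.

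For (i), observe that $q\in\mathcal{M}(\alpha,\beta)$ gives $z=\int y\,dq_z(y)$ for $\alpha$-a.e.\ $z$, so by convexity of $f$ one has $f(z)\leq\int f\,dq_z$. Integrating against $\alpha$ and using $\int q_z(\cdot)\,d\alpha(z)=\beta$, the hypothesis $\int f\,d\alpha=\int f\,d\beta$ forces equality in Jensen's inequality for $\alpha$-a.e.\ $z$. For each such $z$ and each $a\in\partial f(z)$, the tangent bound $f(y)\geq f(z)+a\cdot(y-z)$ becomes an equality $q_z$-a.s., i.e.\ $q_z(C_f^a(z))=1$; intersecting over a countable dense subset of $\partial f(z)$ (the condition in $a$ passes to the limit) yields $q_z(C_f(z))=1$. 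It then remains to invoke the standard fact that a probability measure supported on a closed convex set $K$, with barycenter in $K$, is supported on the closed face of $K$ containing the barycenter in its relative interior; applied to $K=C_f(z)$ together with Lemma~\ref{lemtopo}(iii), this gives $q_z(\overline{D_f(z)})=1$.

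For (ii), the joint law of $(M_0,M_t)$ is a martingale coupling between $\alpha$ and $\mathrm{Law}(M_t)$ for every $t$, and the chain $\alpha\leq_c\mathrm{Law}(M_t)\leq_c\beta$ together with $\int f\,d\alpha=\int f\,d\beta$ forces $\int f\,d\mathrm{Law}(M_t)=\int f\,d\alpha=\int f\,d\beta$. Applying (i) to this coupling gives $M_t\in\overline{D_f(M_0)}$ a.s.\ for each fixed $t$. Taking a countable union over $t\in[0,1]\cap\mathbb{Q}$ and exploiting right-continuity of trajectories together with closedness of $\overline{D_f(M_0)}$ upgrades this to: with probability one, $M_t\in\overline{D_f(M_0)}$ for every $t\in[0,1]$.

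For (iii), I start from $\beta(\overline{D_f(z)})=\int q_y(\overline{D_f(z)})\,d\alpha(y)$ and split according to whether $y\in\overline{D_f(z)}$ or not. The key input is a sub-face property: $y\in\overline{D_f(z)}$ implies $\overline{D_f(y)}\subseteq\overline{D_f(z)}$. Indeed $y\in\overline{D_f(z)}\subseteq C_f(z)$ yields $C_f(y)\subseteq C_f(z)$ by Lemma~\ref{lemtopo}(i), so the closed face of $C_f(y)$ at $y$ sits inside the closed face of $C_f(z)$ at $y$, which sits inside the closed face of $C_f(z)$ at $z$, namely $\overline{D_f(z)}$. Combined with (i), this produces $q_y(\overline{D_f(z)})=1$ for $\alpha$-a.e.\ $y\in\overline{D_f(z)}$, accounting for $\alpha(\overline{D_f(z)})$. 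For $y\notin\overline{D_f(z)}$ the same property forces $D_f(y)\cap\overline{D_f(z)}=\emptyset$ (a common point $u$ would give $D_f(u)=D_f(y)\subseteq\overline{D_f(z)}$, contradicting $y\in D_f(y)\setminus\overline{D_f(z)}$), hence $q_y(\overline{D_f(z)})=q_y\big(\overline{D_f(z)}\cap(\overline{D_f(y)}\setminus D_f(y))\big)$, which is the claimed identity. For the absolutely continuous case I would split on $\dim D_f(z)$: if $\dim D_f(z)=d$, then $D_f(z)$ is open in $\R^d$ and cannot meet $\overline{D_f(y)}$ for any $y\notin\overline{D_f(z)}$, so the residual integral is bounded by $\beta(\overline{D_f(z)}\setminus D_f(z))=0$, the topological boundary of a $d$-dimensional convex body being Lebesgue-null; if $\dim D_f(z)<d$, then $\overline{D_f(z)}$ lies in a strict affine subspace and is Lebesgue-null, so the integral is bounded by $\beta(\overline{D_f(z)})=0$. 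The main obstacle I expect is the barycenter-on-a-face statement used in (i), which I would justify by a supporting-hyperplane argument inside the epigraph $E_f$: each $a\in\partial f(z)$ exposes the hyperplane $\{t=f(z)+a\cdot(y-z)\}$ at $(z,f(z))$, the contact sets project to $C_f^a(z)$, and combining them selects precisely the face $\overline{\mathrm{rf}_{(z,f(z))} E_f}$, whose projection is $\overline{D_f(z)}$.
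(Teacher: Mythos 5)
Your proposal is correct and follows essentially the same path as the paper's proof in all three parts. The only noticeable divergence is in the final step of~(i): after reducing to $q_z(C_f(z))=1$ via the Jensen rigidity and the countable-dense-subgradient argument (identical to the paper), the paper finishes by invoking the De March--Touzi decomposition of the closed convex set $C_f(z)$ into the two convex pieces $U=\overline{\mathrm{rf}_z C_f(z)}$ and $V=C_f(z)\setminus U$, with the segment property $[y_1,y_0)\subset V$, and derives a contradiction from the barycenter being in $U$; you instead appeal directly to the abstract fact that a probability measure on a closed convex set whose barycenter lies in the relative interior of a face is concentrated on (the closure of) that face. These are equivalent statements, and your supporting-hyperplane sketch inside $E_f$ would indeed prove it, so this is a cosmetic rather than substantive difference. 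In~(ii) and~(iii) your decomposition, the sub-face observation $y\in\overline{D_f(z)}\Rightarrow\overline{D_f(y)}\subseteq\overline{D_f(z)}$, the disjointness $D_f(y)\cap\overline{D_f(z)}=\emptyset$ for $y\notin\overline{D_f(z)}$, and the dimension split in the absolutely-continuous case all match the paper's argument step for step.
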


\begin{proof}[Proof of Proposition \ref{prop:partitionbis}] 
Let us prove the first property. Let $q\in \mathcal{M}(\alpha,\beta)$. 
Applying Jensen inequality, we get
\[
\int f\,\ud\beta = \int \left(\int f(y)\,\ud q_z(y)\right)\,\ud\alpha(z) \geq \int  f\left(\int y \,\ud q_z(y) \right)\,\ud\alpha(z) = \int f(z)\,\ud\alpha(z) = \int f\,\ud\beta.
\]
So in particular, for $\alpha$ almost all $z\in \R^d$, we get
 \[
 f(z)=f\left(\int y \,\ud q_z(y)\right) = \int f( y )\,\ud q_z(y).
 \]
 Let $z\in \mathrm{Dom}( f)$ be any such point.  If $a \in \partial  f(z)$, then 
 \[
 \int f( y )\,\ud q_z(y) \geq \int f( z )+a\cdot (z-y)\,\ud q_z(y) = f(z).
 \]
 Thus, for $q_z$ almost all $y$, $f( y )=f( z )+a\cdot (z-y)$. In other words, $q_z(C_f^a(z))=1$. If $(a_i)_{i \geq 1}$ is a dense sequence in $\partial f(z)$, then $C_{f}(z)=\cap_{i\geq 1} C_f^{a_i}(z)$ and therefore  
 \[
 q_z\big(\R^d \setminus C_{ f}(z)\big) \leq \sum_{i\geq 1} q_z\big(\R^d\setminus C_f^{a_i}(z)\big)=0,
 \] 
 and so $q_z( C_{ f}(z))=1$. 
 Since $ C_{f}(z)$ is convex, the subsets $U:=\overline{\mathrm{rf}_z C_{ f}(z)}=\overline{D_{f}(z)}$ and its complement in $C_{ f}(z)$, $V:=C_{ f}(z)\setminus \overline{\mathrm{rf}_z C_{ f}(z)}$, are known to be convex (see e.g \cite[Proposition 3.1]{DeMT19}). It follows that $q_z(V)<1$ since otherwise by the martingale property 
\[
z=\int_{C_{ f}(z)} y \,\ud q_z(y)=\int_{V} y \,\ud q_z(y)\in V,
\] 
which contradicts $z\in \overline{D_{ f}(z)}=U$. Actually, $q_z(V)=0$ and therefore as expected $q_z(U)=1$ since otherwise,   
 \[
 y_0=\frac1{q_z(U)} \int_U  y \,\ud q_z(y)\in U, \quad y_1=\frac1{q_z(V)} \int_V  y \,\ud q_z(y)\in V, 
 \]
 with  $q_z(U)y_0+q_z(V)y_1=z\in U$. This contradicts the fact that according to \cite[Proposition 3.1]{DeMT19}, if $y_1\in V$ and $y_0\in U$ then $[y_1,y_0)\subset V$.

 Let us now prove the second property.
 For $t\in [0,1]$, denote by $\alpha_t$ the law of $M_t$. Since $(M_t)_{t\in [0,1]}$ is a martingale, it holds $\alpha \leq_c\alpha_t\leq_c\beta$. Therefore, 
 \[
 \int f\,\ud \alpha \leq \int f\,\ud \alpha_t \leq \int f\,\ud \beta = \int f\,\ud \alpha,
 \]
 and so $\int f\,\ud \alpha = \int f\,\ud \alpha_t$. Let $q^t=(q^t_z)_{z\in \R^d}$ be the conditional law of $M_t$ knowing $M_0$. Then $q^t\in \mc{M}(\alpha,\alpha_t)$ and applying the first property, we get $q^t_z(\overline{D_f(z)})=1$ for $\alpha$ almost all $z\in \mathrm{Dom}(f)$. Integrating with respect to $\alpha$, yields $\mathbb{P}(M_t \in \overline{D_f(M_0)})=1$. By countable intersection of almost sure events, we get $\mathbb{P}(\bigcap_{t\in \mathbb{Q}\cap [0,1]}\{M_t \in \overline{D_f(M_0)} \})=1$, and finally, by right continuity of the trajectories, $\mathbb{P}(\{\forall t\in [0,1], M_t \in \overline{D_f(M_0)} \})=1$.

 The third statement of Proposition \ref{prop:partitionbis} is an easy consequence of the first one. Since $q$ is a martingale probability kernel transporting $\alpha$ onto $\beta$, the first statement ensures that
 \begin{multline*}
 \beta\big(\overline{D_{f}(z)}\big)=\int q_y\big(\overline{D_{f}(z)}\big)\, d\alpha(y)= \int_{\overline{D_{f}(z)}} q_y\big(\overline{D_{f}(z)}\cap\overline{D_{f}(y)}\big)\, d\alpha(y)+\int_{\R^d\setminus \overline{D_{f}(z)}} q_y\big(\overline{D_{f}(z)}\cap\overline{D_{f}(y)}\big)\, d\alpha(y).\end{multline*}
  If $y\in \overline{D_{f}(z)}$ then $y\in C_{f}(z)$ since according  to Lemma \ref{lemtopo}, $\overline{D_{f}(z)}=\overline{\mathrm{rf}_z C_{f}(z)}\subset \overline{ C_{f}(z)}=C_{f}(z)$. As a consequence one has $C_{f}(y)\subset C_{f}(z)$ and therefore $\overline{D_{f}(y)}=\overline{\mathrm{rf}_y C_{f}(y)}\subset \overline{\mathrm{rf}_z C_{f}(z)}= \overline{D_{f}(z)}$. It follows that $q_y\big(\overline{D_{f}(z)}\cap\overline{D_{f}(y)}\big)=q_y\big(\overline{D_{f}(y)}\big)=1$. Conversely, if  $y\notin \overline{D_{f}(z)}$ then necessarily  $D_{f}(y)\cap \overline{D_{f}(z)}=\emptyset$ (otherwise, for some $y'\in D_{f}(y)\cap \overline{D_{f}(z)}$,  same arguments as above imply  $y\in D_{f}(y)=D_{f}(y')\subset \overline{D_{f}(y')}\subset \overline{D_{f}(z)}$). These last observations lead to 
  \[
  \beta\big(\overline{D_{ f}(z)}\big)=\alpha\big(\overline{D_{ f}(z)}\big)+\int_{\R^d\setminus \overline{D_{f}(z)}} q_y\big(\overline{D_{ f}(z)}\cap\big(\overline{D_{f}(y)}\setminus D_{ f}(y)\big) \big)\, \ud\alpha(y).
  \]
  Now, let us assume that $\beta$ is absolutely continuous with respect to Lebesgue and let $z\in \mathrm{Dom}(f)$. If $D_f(z)$ has dimension less than or equal $d-1$, then $\beta(D_f(z))=0$, and so $\alpha(D_f(z))=0$ also. Let us assume that $D_f(z)$ has dimension $d$. 
  Note that, if $y\in \R^d\setminus \overline{D_{f}(z)}$, then $D_f(y) \subset \R^d\setminus D_{f}(z)$ and so $\overline{D_{ f}(z)}\cap\big(\overline{D_{f}(y)}\setminus D_{ f}(y)\big) \subset \overline{D_{ f}(z)}\cap\overline{D_{f}(y)} \subset \overline{D_{ f}(z)}\cap\overline{\R^d\setminus D_{f}(z)} = \partial D_f(z)$ and so 
\[
\int_{\R^d\setminus \overline{D_{f}(z)}} q_y\big(\overline{D_{ f}(z)}\cap\big(\overline{D_{f}(y)}\setminus D_{ f}(y)\big) \big)\, \ud\alpha(y)\leq \int_{\R^d\setminus \overline{D_{f}(z)}} q_y\big(\partial D_f(z)\big)\, \ud\alpha(y)\leq \int_{\R^d} q_y\big(\partial D_f(z)\big)\, \ud\alpha(y)=\beta(\partial D_f(z))=0,
\]
since $\beta$ is absolutely continuous.
So we have
\[
  \beta\big(\overline{D_{ f}(z)}\big)=\alpha\big(\overline{D_{ f}(z)}\big),
\]
which completes the proof.
\end{proof}

We can now immediately derive from the preceding result the announced martingale-invariant paving.
\begin{corollary}\label{cor:invariant-paving} Let $M=(M_t)_{t\in [0,1]}$ and $N=(N_t)_{t\in [0,1]}$ be two martingales with a.s. right continuous trajectories.
\begin{enumerate}
    \item If $(M_t)_{t\in [0,1]}$ is such that $M_0 \sim \bar{\mu}$ and $M_1 \sim \nu$, then with probability $1$, for all $t\in [0,1]$, $M_t \in \overline{D_{\bar f}(M_0)}.$
    \item If $(N_t)_{t\in [0,1]}$ is such that $N_0 \sim \mu$ and $N_1 \sim \tilde{\nu}$, then with probability $1$, for all $t\in [0,1]$, $N_t \in \overline{D_{\bar g}(N_0)}.$
\end{enumerate}
\end{corollary}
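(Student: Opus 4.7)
The plan is to deduce both assertions as direct applications of statement (ii) of Proposition \ref{prop:partitionbis}, matching the hypotheses via the results already recorded in Section \ref{Sec:Dualpot}.

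For the first assertion, I would take $\alpha = \bar{\mu}$, $\beta = \nu$, and $f = \bar{f}$. By construction of the backward projection (see around \eqref{eq:GJ20}), we have $\bar{\mu} \leq_c \nu$, and $\bar{f}$ is a lower semicontinuous convex function which is $\nu$-integrable (hence also $\bar{\mu}$-integrable because $\bar{\mu} \leq_c \nu$ and $\bar{f}(x) \geq -C(1+\|x\|)$ for some constant $C$, the standard affine lower bound for convex l.s.c. functions). The equality of integrals $\int \bar{f}\,\ud\bar{\mu} = \int \bar{f}\,\ud\nu$ required by Proposition \ref{prop:partitionbis} is exactly the content of Lemma \ref{lem:equality-barf}. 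Applying Proposition \ref{prop:partitionbis} (ii) therefore yields that, with probability $1$, $M_t \in \overline{D_{\bar{f}}(M_0)}$ for every $t\in[0,1]$.

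For the second assertion, I would take $\alpha = \mu$, $\beta = \tilde{\nu}$, and $f = \bar{g}$. By definition of the forward projection $\tilde{\nu}$, we have $\mu \leq_c \tilde{\nu}$. The function $\bar{g} = Q_2\bar{f}$ is convex (as an infimum convolution of convex functions) and finite everywhere, and since both $\mu$ and $\tilde{\nu}$ belong to $\mathcal{P}_2(\R^d)$ and $\bar{g}(x)\leq \bar{f}(y_0)+\tfrac{1}{2}\|x-y_0\|^2$ for any $y_0 \in \mathrm{Dom}(\bar{f})$, the function $\bar{g}$ is integrable against both measures. The required equality $\int \bar{g}\,\ud\mu = \int \bar{g}\,\ud\tilde{\nu}$ is precisely the first conclusion of Lemma \ref{lem:forward-proj}. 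Applying Proposition \ref{prop:partitionbis} (ii) once again gives that $N_t \in \overline{D_{\bar{g}}(N_0)}$ almost surely for all $t\in[0,1]$.

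There is essentially no obstacle: the whole content of Section \ref{sec:paving} up to this point has been engineered so that this corollary is immediate. The only mildly non-trivial check is confirming integrability of the potentials $\bar{f}$ and $\bar{g}$ against the relevant measures, but both follow from the standard affine minorization of convex functions combined with the second-moment assumption on $\mu$, $\nu$, and $\tilde{\nu}$.
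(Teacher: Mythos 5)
Your proof is correct and follows exactly the paper's argument: invoke Lemma \ref{lem:equality-barf} (resp.\ Lemma \ref{lem:forward-proj}) to verify the equality of integrals, recall the convex-order relations $\bar\mu\leq_c\nu$ and $\mu\leq_c\tilde\nu$, and apply Proposition \ref{prop:partitionbis}(ii). The extra integrability checks you spell out are a harmless (and reasonable) elaboration of what the paper leaves implicit.
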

\proof
According to Lemmas \ref{lem:equality-barf} and \ref{lem:forward-proj}, one gets $\int \bar{f}\,\ud \bar{\mu} = \int \bar{f}\, \ud \nu$ and $\int \bar{g}\,\ud \mu = \int \bar{g}\, \ud \tilde{\nu}$. Since $\bar{\mu} \leq_c \nu$ and $\mu \leq_c \tilde{\nu}$, the conclusion follows from Item 2. of Proposition \ref{prop:partitionbis}.
\endproof

To summarize, in what precedes, we observed that given two probability distributions $\mu,\nu \in \mc P_2(\R^d)$, any dual optimizer $\bar{f}$ in \eqref{eq:duality} is such that $\int \bar{f}\,\ud \bar{\mu} = \int \bar{f}\,\ud\nu$, which then yields, thanks to the general Proposition \ref{prop:partitionbis}, the martingale invariant paving $\big\{\overline{D_{ \bar f}(z)}\big\}_{z\in \mathrm{Dom}(\bar{f})}$.
Corollary \ref{cor:invariant-paving} is reminiscent of a deep result of de March and Touzi \cite{DeMT19} showing that for any $\alpha \leq_c \beta$ there exists a convex paving (minimal in some sense) adapted to all martingale kernels sending $\alpha$ onto $\beta$ (see Theorem 2.1 of \cite{DeMT19} for a precise statement). While our result only covers the case where $\alpha = \bar{\mu}$ is the projection of a given probability measure $\mu$ onto the set of all probability measures dominated by $\beta=\nu$ for the convex order, it has the interest of linking in a constructive way the paving with the dual potential $\bar{f}$.

The following proposition explores some converse construction.

\begin{proposition}\label{prop-constr-rec}
Let $\alpha,\nu$ be two compactly supported probability measures on $\R^d$ such that $\alpha \leq_c \nu$ and a  lower semicontinuous convex function $k:\R^d \to \R$ such that $\int k\,\ud \alpha = \int k\,\ud \nu$. There exists a compactly supported probability measure $\mu$ such that $\alpha = \bar{\mu}$ is the backward projection of $\mu$ on $\nu$ in the convex order, and such that $k$ is a dual optimizer for $\overline{\mc T_2}(\nu|\mu)$. 
\end{proposition}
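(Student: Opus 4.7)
The idea is to build $\mu$ by pushing $\alpha$ forward along a measurable section of $\mathrm{id}+\partial k$, designed so that $\varphi := (k + \|\cdot\|^2/2)^*$ plays the role of the convex potential in \eqref{eq:varphi}. Since $k + \|\cdot\|^2/2$ is real-valued and $1$-strongly convex on $\R^d$, $\varphi$ is finite, convex and continuously differentiable on $\R^d$ with $\nabla \varphi$ $1$-Lipschitz, and $\varphi^* = k + \|\cdot\|^2/2$. Since $k$ is a real-valued convex function, it is continuous and $\partial k(z)$ is non-empty, convex, compact and locally bounded in $z$; hence one can pick a measurable selection $u : \R^d \to \R^d$ of $z \mapsto \partial k(z)$ (for instance, the minimal-norm element). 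Setting $T(z) := z + u(z)$ and $\mu := T_\#\alpha$, compactness of the support of $\alpha$ forces $\mu$ to be compactly supported. Moreover $T(z) \in z + \partial k(z) = \partial \varphi^*(z)$, which, since $\varphi$ is differentiable, gives $z = \nabla\varphi(T(z))$; in particular $\nabla\varphi_\#\mu = \alpha$.

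Next, I would compare the dual value of $k$ to $W_2^2(\mu, \alpha)$. Completing the square yields the identity $Q_2 k(x) = \tfrac{1}{2}\|x\|^2 - \varphi(x)$. The Fenchel equality $\varphi(T(z)) + \varphi^*(z) = \langle T(z), z\rangle$, together with $\varphi^*(z) = k(z) + \tfrac{1}{2}\|z\|^2$, gives after a short computation
\[
\tfrac{1}{2}\|T(z)\|^2 - \varphi(T(z)) \;=\; \tfrac{1}{2}\|u(z)\|^2 + k(z).
\]
Integrating against $\alpha$ and using the hypothesis $\int k\, \ud\alpha = \int k\, \ud\nu$, this implies
\[
\int Q_2 k\,\ud\mu \;-\; \int k\,\ud\nu \;=\; \tfrac{1}{2}\int \|u\|^2 \,\ud\alpha.
\]
On the other hand, the coupling $(T, \mathrm{id})_\#\alpha$ between $\mu$ and $\alpha$ is supported on the graph of $\nabla \varphi$, hence is $W_2^2$-optimal by the Knott--Smith/Brenier criterion, so $W_2^2(\mu, \alpha) = \int \|u\|^2\,\ud\alpha$.

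Combining these two identities with \eqref{eq:GJ20} (valid since $\alpha \leq_c \nu$) and the dual inequality \eqref{eq:duality}, I obtain the chain
\[
\tfrac{1}{2}W_2^2(\mu, \alpha) \;=\; \int Q_2 k\,\ud\mu - \int k\,\ud\nu \;\leq\; \tfrac{1}{2}\overline{\mc{T}}_2(\nu|\mu) \;\leq\; \tfrac{1}{2}W_2^2(\mu, \alpha),
\]
so that equality holds throughout. The outer equality, $\overline{\mc{T}}_2(\nu|\mu) = W_2^2(\mu, \alpha)$ with $\alpha \leq_c \nu$, together with the uniqueness of the backward projection recalled just after \eqref{eq:GJ20}, forces $\bar\mu = \alpha$; the first equality in the chain expresses that $k$ achieves the supremum in \eqref{eq:duality}, hence is a dual optimizer. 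The only mildly technical steps are the measurable selection of $\partial k$ (routine since $k$ is continuous with compact convex subdifferentials) and the Fenchel bookkeeping; the substantive observation is that the compatibility condition $\int k\,\ud\alpha = \int k\,\ud\nu$ is precisely what converts the dual value at $k$ into the right Wasserstein distance.
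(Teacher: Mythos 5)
Your proof is correct and follows essentially the same route as the paper: set $\varphi=(k+\|\cdot\|^2/2)^*$, construct $\mu$ so that $\nabla\varphi_\#\mu=\alpha$, check $\tfrac12 W_2^2(\mu,\alpha)=\int Q_2k\,\ud\mu-\int k\,\ud\alpha$ via Fenchel duality, then squeeze against \eqref{eq:duality} and \eqref{eq:GJ20}. The only (cosmetic) difference is that you produce $\mu$ as the pushforward of $\alpha$ under a deterministic measurable selection $z\mapsto z+u(z)\in\partial\varphi^*(z)$, whereas the paper takes $\mu=\alpha r$ for an arbitrary kernel $r_z$ concentrated on $\partial\varphi^*(z)$ (e.g.\ the uniform one), of which your choice is a degenerate special case.
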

In other words, under mild assumptions on $\alpha\leq_c \nu$, all convex functions saturating the convex order between $\alpha$ and $\nu$ arise as dual optimizers of some quadratic barycentric transport problem realizing $\alpha$ as a backward projection on $\nu.$
\proof
Define $\varphi= \left(k+\frac{\|\,\cdot\,\|^2}{2}\right)^*$ so that $\varphi^* = k+\frac{\|\,\cdot\,\|^2}{2}$. The function $\varphi$ is continuously differentiable as a Fenchel-Legendre transform of a strictly convex function. Since $\varphi^*$ takes finite values on $\R^d$, for all $z\in \R^d$, $\partial \varphi^*(z)$ is a non-empty, compact and convex subset of $\R^d$. Moreover, it is not difficult to check that the diameter of $\partial \varphi^*(z)$ is uniformly bounded for $z$ in the support of $\alpha$, denoted $K_\alpha$ in what follows. 
Consider a kernel $r=(r_z)_{z\in K_\alpha}$ such that, for all $z\in K_\alpha$, $r_z(\partial \varphi^*(z))=1$. One can, for instance, take
for $r_z$ the uniform probability measure on $\partial \varphi^*(z)$ defined as $r_z(\,\cdot\,)=\frac{\mathcal{H}_\ell\left(\,\cdot\, \cap \partial \varphi^*(z)\right)}{\mathcal{H}_\ell(\partial \varphi^*(z))}$, where $\mathcal{H}_\ell$ denotes the $\ell$ dimensional Hausdorff measure with $\ell$ being the dimension of the affine hull of $\partial \varphi^*(z)$). Consider now $\mu = \alpha r$. It is easily seen that $\mu$ has compact support. We claim that $\nabla \varphi_\# \mu = \alpha$. Namely, if $x \in \partial \varphi^*(z)$, then by duality of subgradient, $z\in \partial \varphi(x) = \{\nabla \varphi(x)\}$ and so $\nabla \varphi(x)=z$. Thus, for any bounded test function $a$, 
\[
\int a(\nabla \varphi)\,\ud \mu = \int \left( \int a (\nabla \varphi (x)) \,\ud r_z(x)\right) \,\ud\alpha(z) = \int a(z) \,\ud \alpha(z).
\]
Since $\varphi$ is convex, $\nabla \varphi$ is an optimal transport map for $W_2^2(\mu,\alpha)$. Integrating $x\cdot \nabla \varphi(x) = \varphi(x)+\varphi^*(\nabla \varphi(x))$ with respect to $\ud \mu(x)$ yields
\[
\int \varphi\,\ud\mu + \int \varphi^*\,\ud\alpha = \int x\cdot \nabla \varphi(x)\,\ud \mu(x)
\]
from which, by subtracting $1/2$ times the second order moments of $\mu$ and $\alpha$, it follows that
\[
\frac{1}{2}W_2^2(\mu,\alpha) = \int Q_2k\,\ud\mu-\int k\,\ud\alpha.
\]
By assumption, $\int k\,\ud\alpha = \int k\,\ud\nu$ and $\alpha \leq_c \nu$, so
\[
\frac{1}{2}W_2^2(\mu,\alpha) = \int Q_2k\,\ud\mu-\int k\,\ud\nu \leq \frac{1}{2}\mc T_2(\nu|\mu) = \inf_{\eta \leq_c \nu} \frac{1}{2}W_2^2(\mu,\eta) \leq \frac{1}{2}W_2^2(\mu,\alpha).
\]
So, we conclude that, 
\[
W_2^2(\mu,\alpha) = \inf_{\eta \leq_c \nu} W_2^2(\mu,\eta)=\int Q_2k\,\ud\mu-\int k\,\ud\nu,
\]
and so $\alpha$ is the backward projection of $\mu$ on $\nu$ in the convex order, and $k$ is a dual optimizer.
\endproof

\subsection{Construction of the martingale-invariant paving from the optimal potential \texorpdfstring{$\bar g=Q_2\bar f$}{g=Q2fw}. }

According to Lemma \ref{dualitybis}, if $\bar{f}$ is an optimal potential in the dual equality \eqref{eq:duality}, then $\bar g=Q_2\bar f$ is also an optimizer of the dual equality \eqref{eq:dualitybis}. Moreover, $\bar f$ and $\bar g$ are also related by the following properties.

\begin{proposition}\label{prop-lienfg}
     According to the notation of the last section, $\bar g=Q_2 \bar f=\frac{\|\cdot\|^2}2-\varphi $ is continuously differentiable in $\R^d$ with $\nabla \bar g(x)=x-\nabla\varphi(x)$, $x\in \R^d$. Recall that $\psi=\varphi^*=\bar f+\frac{\|\cdot\|^2}2$. One has 
\begin{enumerate}
 \item For any $z\in\mathrm{Dom}( \bar f)$ and any $a\in \partial \bar f (z)$, \[C_f^a(z)=\{y\in\R^d : \nabla \bar g(a+z)=\nabla \bar g(a+y)\},\]
 where the definition of the set $C_f^a(z)$ is given by \eqref{defC_f}.
 \item For any $z\in \mathrm{Dom}( \bar f)$, $\partial \bar f(z)=\{\nabla \bar g(x): x\in (\nabla \varphi)^{-1} (z)\}$.
 \end{enumerate} 
\end{proposition}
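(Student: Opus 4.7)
The plan is to derive both statements from a single Fenchel--Moreau duality identity applied to $\psi := \bar f + \tfrac{1}{2}\|\cdot\|^2$ and its conjugate $\varphi = \psi^*$. Since $\bar f$ is proper l.s.c.\ convex, so is $\psi$, and the continuously differentiable conjugate $\varphi$ satisfies the standard inverse relation $a \in \partial \psi(z) \Longleftrightarrow z \in \partial \varphi(a) = \{\nabla \varphi(a)\}$. Using $\partial \psi(z) = z + \partial \bar f(z)$ and $\nabla \bar g = \mathrm{id} - \nabla \varphi$, this yields the key equivalence
\[
a \in \partial \bar f(z) \ \Longleftrightarrow\ \nabla \varphi(a+z) = z \ \Longleftrightarrow\ \nabla \bar g(a+z) = a.
\]

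For statement (i), I would apply this equivalence at $y$ in place of $z$: combined with the observation that $\nabla \bar g(a+z) = a$ whenever $a \in \partial \bar f(z)$, it identifies the right-hand side of (i) with $\{y \in \R^d : a \in \partial \bar f(y)\}$. The remaining point is the purely convex-analytic identity $C_f^a(z) = \{y : a \in \partial \bar f(y)\}$. One direction: if $a \in \partial \bar f(y)$ and $a \in \partial \bar f(z)$, the two subgradient inequalities evaluated at the opposite base point sandwich $\bar f(y) - \bar f(z)$ against $a\cdot(y-z)$, forcing equality, i.e.\ $y \in C_f^a(z)$. Conversely, given $y \in C_f^a(z)$, the identity $\bar f(y) = \bar f(z) + a\cdot(y-z)$ lets one rewrite the subgradient inequality of $\bar f$ at $z$ as the subgradient inequality of $\bar f$ at $y$ with the same slope $a$.

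For statement (ii), the key equivalence directly parametrizes the preimage: setting $x = a + z$, one has $\nabla \varphi(x) = z$ iff $a \in \partial \bar f(z)$, so that the translation $a \mapsto z + a$ is a bijection between $\partial \bar f(z)$ and $(\nabla \varphi)^{-1}(z)$. Under this bijection, $\nabla \bar g(x) = x - \nabla \varphi(x) = (z+a) - z = a$, so the image of $(\nabla \varphi)^{-1}(z)$ under $\nabla \bar g$ is exactly $\partial \bar f(z)$.

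There is no real conceptual obstacle here; the argument is a careful unpacking of Fenchel--Moreau duality for $\psi$. The only points requiring minor vigilance are the justification of $\partial \varphi = \{\nabla \varphi\}$ everywhere (already granted in Section \ref{Sec:Dualpot}) and the invocation of the inverse formula $\partial \psi^* = (\partial \psi)^{-1}$, which requires $\psi$ to be proper l.s.c.\ convex — this is immediate from the corresponding properties of $\bar f$.
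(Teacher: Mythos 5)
Your proposal is correct and follows essentially the same route as the paper: both proofs rest on the duality equivalence $a \in \partial \bar f(z) \Leftrightarrow \nabla\varphi(a+z)=z \Leftrightarrow \nabla\bar g(a+z)=a$ (derived from $\psi=\varphi^*$), use it to rewrite the right-hand side of (i) and then verify the two inclusions via the subgradient inequalities, and read off (ii) directly from the same equivalence. Your extraction of the intermediate identity $C_f^a(z)=\{y: a\in\partial\bar f(y)\}$ for $a\in\partial\bar f(z)$ is a slightly tidier packaging, but the mathematical content is identical.
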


\begin{proof}
    Let us prove Item $(i)$. For $z\in \mathrm{Dom}( \bar f)$, let $a\in \partial f(z)$. For clarity, let us first denote \[\widetilde C_f^a(z):=\{y\in\R^d : \nabla \bar g(a+y)=\nabla \bar g(a+z)\}.\] 
    As a useful tool for this proof, observe that since $\psi=\bar f+\frac{\|\cdot \|^2}2$ and $\psi^*=\varphi =\frac{\|\cdot \|^2}2-\bar g$, one has 
    \begin{equation}\label{equiv}a\in \partial\bar f(z)\Leftrightarrow a+z\in \partial \psi (z) \Leftrightarrow \nabla\varphi (a+z)=z\Leftrightarrow  \nabla \bar g(a+z)=a.
    \end{equation}
    Assume first that $y\in \widetilde C_f^a(z)$. From the last observation,  $a=\nabla \bar g(a+z)=\nabla \bar g(a+y)$ and therefore $a\in \partial \bar f(y)$. Since $a\in \partial \bar f(z)$, one has $\bar f(y)\geq \bar f(z)+a\cdot(y-z)$ and since $a\in\partial \bar f(z)$, one has $\bar f(z)\geq \bar f(y)+a\cdot(z-y)$, which implies that $y\in C_f^a(z)$.
Conversely, if $y\in C_f^a(z)$  then by definition of $C_f^a(z)$, it holds 
\[
\bar{f}(y) = \bar{f}(z)+a\cdot (y-z)
\]
and so 
\[
a\cdot y - \bar{f}(y) = a\cdot z - \bar{f}(z) = \bar{f}^*(a),
\]
where the second equality comes from $a \in \partial \bar{f}(z)$. The equality 
$
\bar{f}(y) = a\cdot y - \bar{f}^*(a)
$
 implies that $a \in \partial f(y)$.
According to \eqref{equiv}, it follows that  $\nabla \bar{g}(a+y)=a$. But, since $a \in \partial \bar{f}(z)$, we also have $\nabla \bar{g}(a+z)=a$, and so $\nabla \bar{g}(a+z)=\nabla \bar{g}(a+y)$ and $y\in \widetilde{C}^a(z).$ This ends the proof of the second part of Proposition \ref{prop-lienfg}.

 Item $(ii)$ is also a consequence of \eqref{equiv}.  On one side it gives that if $a\in \partial \bar f(z)$ then $a=\nabla \bar g(x)$ with $x=a+z\in (\nabla \varphi)^{-1}(z)$. Conversely if $a=\nabla \bar g(x)$ with $x\in (\nabla \varphi)^{-1}(z)$ then $a\in \partial \bar f (x-a)$ with $x-a=z$. The proof of  Proposition  \ref{prop-lienfg} is therefore completed.
\end{proof}

\section{Benamou-Brenier type formula for \texorpdfstring{$\overline{\mathcal{T}}_2$}{T2}}\label{sec:benamou}
This section develops a notion of geodesic on the space $\mc{P}_2(\R^d)$ equipped with the cost functional $\overline{\mathcal{T}}_2$.
If $x=(x_t)_{t\in [0,1]}$ is a continuous path taking values in some metric space $(E,d)$, the length of $x$ is usually defined as follows:
\[
\mathrm{Length}((x_t)_{t\in [0,1]}) := \sup_{0=t_0<t_1<\cdots<t_n=1}\sum_{i=0}^{n-1}d(x_{t_i},x_{t_{i+1}}).
\]
Even if $\overline{\mathcal{T}}_2^{1/2}(\,\cdot\,|\,\cdot\,)$ is not a distance, we will define the oriented length of a continuous path $(\mu_t)_{t\in[0,1]}$ interpolating between $\mu_0=\mu$ and $\mu_1=\nu$ as follows:
\[
\ell((\mu_t)_{t\in [0,1]}):=\sup_{0=t_0<t_1<\cdots<t_n=1}\sum_{i=0}^{n-1}\sqrt{\overline{\mc{T}}_2(\mu_{t_{i+1}}|\mu_{t_i})}.
\]
The following result shows that the space $\big(\mc{P}_2(\R^d),\overline{\mathcal{T}}^{1/2}_2\big)$ is a sort of geodesic space, in the sense that given two probability measures $\mu,\nu$ the ``distance" $\sqrt{\overline{\mc{T}}_2(\nu|\mu)}$ coincides with the shortest length of a path that joins $\mu$ to $\nu$. This result is inspired by the classical Benamou-Brenier formula for the standard quadratic Wasserstein distance $W_2$ on $\mc{P}_2(\R^d)$ \cite{Benamou-Brenier}.

In the following, we fix a filtered probability space $(\Omega,\mathcal{F},\mathbb{P})$ sufficiently rich to support the existence of a standard $d$ dimensional Brownian motion $B$ and an $\mathcal{F}_0$-measurable random vector $X_0$ with law $\mu$.

\newpage
\begin{theorem}\label{thm:BBbar}
Let $\mu,\nu\in\mc{P}_2(\R^d)$.
\begin{enumerate}
    \item It holds  
\begin{equation}\label{eq:BBbar}
\overline{\mc{T}}_2(\nu|\mu)=\inf\mathbb{E}\int_0^1\|v_t\|^2\ud t,
\end{equation}
where the infimum is taken over all progressively measurable processes  $(v_t)_{t\in [0,1]}$ with the following constraints: there exists an $\mathcal{F}_0$-measurable random variable $X_0\sim \mu$ and an $\mc{F}$-martingale $(M_t)_{t\in [0,1]}$ such that the process $X_t = X_0+\int_0^t v_s\,ds +M_t-M_0$, $t\in [0,1]$, is such that $X_1$ has law $\nu$. 
\item Moreover
\begin{equation}\label{eq:geod}
\overline{\mc{T}}_2^{1/2}(\nu|\mu)=\inf \ell((\mu_t)_{t\in [0,1]}),
\end{equation}
where the infimum runs over the set of paths $(\mu_t)_{t\in [0,1]}$ such that $\mu_0=\mu$ to $\mu_1=\nu$.
\item Equality in \eqref{eq:BBbar} is achieved for any  process of the form
\begin{equation}\label{eq:Xopt}
X_t = X_0+t(\nabla \varphi(X_0)-X_0) + M_t-\nabla\varphi(X_0),\qquad t\in [0,1]
\end{equation}
(which corresponds to $v_t=\nabla \varphi(X_0)-X_0$, $t\in [0,1]$)
with 
\begin{itemize}
    \item $\varphi$ is a continuously differentiable convex function such that $\nabla\varphi$ pushes forward $\mu$ onto $\overline{\mu}:= \mathrm{argmin}_{\eta \leq_c \nu} W_2(\mu,\eta)$,
    \item $(M_t)_{t\in [0,1]}$ is an $\mathcal{F}$-martingale with a.s. continuous trajectories such that $M_0=\nabla\varphi(X_0)$ a.s. and $M_1\sim \nu$.
\end{itemize}
In this case, $\mu_t=\mathrm{Law}(X_t)$, $t\in [0,1]$, achieves equality in \eqref{eq:geod} and it holds
\begin{equation}\label{eq:geodbar}
\overline{\mc{T}}_2^{1/2}(\mu_t|\mu_s) = (t-s)\overline{\mc{T}}_2^{1/2}(\nu|\mu),\qquad \forall 0\leq s\leq t \leq 1.
\end{equation}
\item Let $X$ be as in Item $(iii)$.  If $M$ is Markovian, then so is $X$.
\end{enumerate}
\end{theorem}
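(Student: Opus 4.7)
Once I establish that the drift $a := X_0 - \nabla\varphi(X_0)$ is $\sigma(X_t)$-measurable for every $t \in [0,1)$, the Markov property of $X$ follows quickly from the Markov property of $M$. Indeed, a direct computation from \eqref{eq:Xopt} shows that for $0 \leq s \leq u \leq 1$,
\[
X_u = X_s - (u-s)a + (M_u - M_s),
\]
so the conditional distribution of $(X_u)_{u\geq s}$ given $\mathcal{F}_s$ depends on $\mathcal{F}_s$ only through $(a, M_s)$. If both quantities are measurable functions of $X_s$, then the Markov property of $M$ transfers immediately to $X$.

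\textbf{Step 1: reduction to drift recovery.} Setting $z := \nabla\varphi(X_0)$, a simple rearrangement of \eqref{eq:Xopt} gives $X_t = (1-t)a + M_t$, so recovering $a$ from $X_t$ also yields $M_t = X_t - (1-t)a$. By Fenchel duality applied to $\varphi^* = \bar f + \|\cdot\|^2/2$, one has $a \in \partial \bar f(z)$. By the equivalence \eqref{equiv} used in the proof of Proposition \ref{prop-lienfg}, this is exactly $\nabla\bar g(a+z) = a$.

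\textbf{Step 2: the paving identity.} Since $M_0 = \nabla\varphi(X_0) \sim \bar\mu$ and $M_1 \sim \nu$, Corollary \ref{cor:invariant-paving} gives $M_t \in \overline{D_{\bar f}(z)}$ almost surely for every $t$. The set $C^a_{\bar f}(z)$ is closed by lower semi-continuity of $\bar f$, and Lemma \ref{lemtopo} gives $D_{\bar f}(z) \subset C_{\bar f}(z) \subset C^a_{\bar f}(z)$, so the inclusion passes to the closure. Proposition \ref{prop-lienfg}(i) then yields the key pointwise identity
\[
\nabla\bar g(a + M_t) = \nabla\bar g(a + z) = a, \quad \text{a.s.}
\]

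\textbf{Step 3: monotone inversion.} Substituting $a + M_t = X_t + ta$ and using $\nabla\bar g(y) = y - \nabla\varphi(y)$, the previous identity can be rewritten, with $b := X_t + ta$, as
\[
X_t = (1-t)\,b + t\,\nabla\varphi(b).
\]
Because $\nabla\varphi$ is monotone as the gradient of a convex function, the map $G_t : b \mapsto (1-t)b + t\nabla\varphi(b)$ is $(1-t)$-strongly monotone for $t \in [0,1)$, hence injective with Borel-measurable inverse. It follows that $b$, and therefore $a = (b-X_t)/t$ and $M_t = X_t - (1-t)a$, are $\sigma(X_t)$-measurable. The case $t=0$ is trivial, since $a = \nabla\bar g(X_0)$ directly. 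Combining with the opening observation completes the proof.

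\textbf{Main obstacle.} The genuinely non-trivial point is Step 2: the paving argument that produces the pointwise identity $\nabla\bar g(a + M_t) = a$ by combining the martingale-invariance of $\{\overline{D_{\bar f}(z)}\}$ (Corollary \ref{cor:invariant-paving}) with the subgradient relation $a \in \partial\bar f(z)$ and the characterization of faces in Proposition \ref{prop-lienfg}(i). This is precisely the ``crucial step'' highlighted in the introduction; once it is in hand, the monotone inversion and the Markov assembly are routine.
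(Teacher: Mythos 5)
Your proposal proves only Item (iv) of the theorem; Items (i), (ii), and (iii) are not addressed at all. This is a substantial gap. The paper establishes (i) and (ii) by the chain of inequalities $\overline{\mathcal{T}}_2^{1/2}(\nu|\mu)\leq \ell((\mu_t))\leq \big(\mathbb{E}\int_0^1\|v_s\|^2\ud s\big)^{1/2}$, obtained by two applications of Jensen's inequality plus Cauchy--Schwarz over any subdivision, and then closes the loop by exhibiting a process of the form \eqref{eq:Xopt} for which $\mathbb{E}\int_0^1\|v_s\|^2\ud s = W_2^2(\mu,\bar\mu)=\overline{\mathcal{T}}_2(\nu|\mu)$, which gives (iii) and \eqref{eq:geodbar} as well. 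Crucially, Item (iii) also requires showing that a continuous $\mathcal{F}$-martingale $M$ with $M_0=\nabla\varphi(X_0)$ and $M_1\sim\nu$ actually \emph{exists}; the paper constructs one from the stretched Brownian motion of Backhoff-Veraguas--Beiglb\"ock--Huesmann--K\"allblad. None of this appears in your write-up, and without the existence argument the theorem is vacuous.

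On Item (iv) itself, your argument is correct and closely parallels the paper's. Both proofs hinge on the same two ingredients: (a) the subgradient relation $a=X_0-\nabla\varphi(X_0)\in\partial\bar f(\nabla\varphi(X_0))$, and (b) the paving invariance $M_t\in\overline{D_{\bar f}(\nabla\varphi(X_0))}$ from Corollary \ref{cor:invariant-paving}, which together let one recover the drift from $X_t$ by a monotone inversion. The paper inverts the map $H_t:(m,u)\mapsto (1-t)u+m$ restricted to the graph of $\partial\bar f$, using cyclic monotonicity of the subgradient; you instead invert the single-valued map $G_t:b\mapsto(1-t)b+t\nabla\varphi(b)$ on all of $\mathbb{R}^d$, which is cleaner since $\nabla\varphi$ is an honest function and $G_t$ is $(1-t)$-strongly monotone. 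The price you pay is reliance on Proposition \ref{prop-lienfg}(i) to convert the face inclusion into the pointwise identity $\nabla\bar g(a+M_t)=a$, whereas the paper works directly with the subgradient inclusion $\partial\bar f(\nabla\varphi(X_0))\subset\partial\bar f(M_t)$ from Lemma \ref{lemtopo}. Both routes are valid; yours is a mild but genuine streamlining of the inversion step. Once you supply the missing proofs of Items (i)--(iii), the proposal would be complete.
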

The proof will in particular establish the existence of a time inhomogenous Markov process $X$ achieving equalities in \eqref{eq:BBbar} and \eqref{eq:geod}.
\begin{proof}
Let $\ud X_t = v_t\,\ud t+\ud M_t$ be admissible in \eqref{eq:BBbar}, denote by $\mu_t=\mathrm{Law}(X_t)$ and let $0=t_0<t_1<\ldots<t_n=1$ be a subdivision of $[0,1]$.
By definition of $\overline{\mathcal{T}}_2(\mu_{t_{i+1}} | \mu_{t_i})$, 
\[
\overline{\mathcal{T}}_2(\mu_{t_{i+1}} | \mu_{t_i}) \leq \mathbb{E}\left[\|\mathbb{E}[X_{t_{i+1}} - X_{t_i}|X_{t_i}]\|^2\right]
\]
Since $M$ is a martingale, we get
\[
\mathbb{E}[X_{t_{i+1}} - X_{t_i}|X_{t_i}] = \mathbb{E}[\int_{t_i}^{t_{i+1}} v_s\,\ud s|X_{t_i}].
\]
So, applying two times Jensen's inequality,
\[
\overline{\mathcal{T}}_2(\mu_{t_{i+1}} | \mu_{t_i}) \leq \mathbb{E}\left[\|\int_{t_i}^{t_{i+1}} v_s\,\ud s\|^2\right] \leq (t_{i+1}-t_i)\mathbb{E}\left[\int_{t_i}^{t_{i+1}} \|v_s\|^2\,\ud s\right].
\]
Hence, by Cauchy-Schwarz,
\begin{align*}
\sum_{i=0}^{n-1} \overline{\mathcal{T}}_2^{1/2}(\mu_{t_{i+1}} | \mu_{t_i}) &\leq \sum_{i=0}^{n-1}(t_{i+1}-t_i)^{1/2}\mathbb{E}\left[\int_{t_i}^{t_{i+1}} \|v_s\|^2\,\ud s\right]^{1/2}\\
& \leq \left(\sum_{i=0}^{n-1}t_{i+1}-t_i\right)^{1/2}\left(\sum_{i=0}^{n-1}\mathbb{E}\left[\int_{t_i}^{t_{i+1}} \|v_s\|^2\,\ud s\right]\right)^{1/2}\\
& = \left(\mathbb{E}\int_0^1\|v_s\|^2\,\ud s\right)^{1/2}.
\end{align*}
By taking the supremum over all partitions, we conclude that
\begin{equation}\label{eq:ineqBB}
\overline{\mathcal{T}}_2^{1/2}(\nu | \mu) \leq \ell((\mu_t)_{t\in [0,1]}) \leq \left(\mathbb{E}\int_0^1\|v_s\|^2\,\ud s\right)^{1/2}.
\end{equation}
Now, assume that $X$ is given by \eqref{eq:Xopt}. Then $v_s=\nabla \varphi (X_0) - X_0$ for all $s\in [0,1].$ Thus 
\[
\mathbb{E}\int_0^1\|v_s\|^2\,\ud s = \mathbb{E}\int_0^1\|\nabla \varphi (X_0) - X_0\|^2\,\ud s = W_2^2(\mu,\overline{\mu}) = \overline{\mathcal{T}}_2(\nu | \mu),
\]
so all inequalities in \eqref{eq:ineqBB} are saturated, proving $(i)$, $(ii)$ and $(iii)$ and \eqref{eq:geodbar}.
It remains to prove the existence of a process $X$ as in Item $(iii)$. In order to construct the martingale part, we will rely on the notion of stretched Brownian motion introduced in \cite{backhoff2020martingale}. 
Consider a standard Brownian motion $B=(B_t)_{t\in [0,1]}$ adapted to the filtration $\mathcal{F}$ and let $X_0$ be an $\mathcal{F}_0$-measurable random variable with distribution $\mu$ independent of $B$.
Consider the following weak optimal transport problem:
\[
\inf_{p} \int  W_2^2(p_x,\gamma)\,\ud\overline{\mu}(x),
\]
where $\gamma$ is the standard Gaussian distribution on $\mathbb{R}^d$ and the infimum runs over the set of probability kernels $p=(p_x)_{x\in \mathbb{R}^d}$ such that $\int y\,\ud p_x(y)=x$ for $\mu$ almost every $x\in \mathbb{R}^d$ and $\overline{\mu}p=\nu$. Since $\overline{\mu} \leq_c \nu$ this set of kernels is non-empty. According to \cite[Theorem 2.2]{backhoff2020martingale}, there exists a $\overline{\mu}$ a.s. unique minimizer $p^*$. For all $x\in \mathbb{R}^d$, let $F^x$ be a convex function such that $\nabla F^x$ pushes forward $\gamma$ to $p^*_x$ and set
\[
g_t(x,b) = \int \nabla F^x(y+b)\,\ud\gamma_{1-t}(y),
\]
where $\gamma_{1-t}$ is the centered Gaussian distribution with covariance $(1-t)I_d$. Setting 
\[
M_t = g_t(\nabla \varphi(X_0), B_t),\qquad t\in [0,1]
\]
it is easily seen that $M_0=\nabla\varphi(X_0)$ a.s. and $M_1 \sim \nu$. Moreover, since for all $x$, $g_t(x,B_t) = \mathbb{E}[\nabla F^x(B_1)|\mathcal{F}_t]$, one concludes that $M$ is an $\mathcal{F}$-martingale. It follows from Corollary 2.5 of \cite{backhoff2020martingale}, that $M$ is Markovian.

Finally, let us prove Item $(iv)$.
Let us first construct a function $w_t$ such that $w_t (X_t) = X_0 - \nabla \varphi (X_0)$ almost surely, that is
\[
w_t((1-t) (X_0-\nabla \varphi (X_0)) + M_t) = X_0-\nabla \varphi (X_0)
\]
almost surely.
We recall that 
\[
\varphi = \left(\bar{f} + \frac{1}{2}|\,\cdot\,|^2\right)^*.
\]
By duality of the subgradients, we have
\[
\nabla \varphi(x) \in \partial \varphi(x) \Rightarrow x \in \partial \varphi^*(\nabla \varphi(x)) = \partial \bar{f}(\nabla \varphi(x)) + \{\nabla \varphi (x)\}.
\]
Therefore, for all $x \in \R^d$,
\[
x-\nabla \varphi(x) \in \partial \bar{f}(\nabla \varphi(x)).
\]
So, almost surely, $X_0-\nabla \varphi(X_0) \in \partial \bar{f}(\nabla \varphi(X_0))$.

Since $M$ has a.s. continuous trajectories, applying Corollary \ref{cor:invariant-paving} yields that, with probability $1$, for all $t \in [0,1]$, $M_t \in \overline{D_{\bar{f}}(\nabla \varphi(X_0))}$.

Hence, it suffices to construct a function $w_t$ such that, 
\[
w_t((1-t) u + m)=u
\]
for all $u \in \partial \bar{f}(\nabla \varphi(x_0))$ and all $m \in \overline{D_{\bar{f}}(\nabla\varphi (x_0))}$ and all $x_0.$
According to Item $(iii)$ of Lemma \ref{lemtopo}, $\overline{D_{\bar{f}}(\nabla\varphi (x_0))} \subset C_{\bar{f}}(\nabla\varphi (x_0)))$. Thus, according to Item $(i)$ of Lemma \ref{lemtopo}, one has $\partial \bar{f}(\nabla \varphi(x_0)) \subset \partial \bar f (m)$.
Thus, it is enough to construct a function $w_t$ such that
\[
w_t((1-t) u + m)=u
\]
for all $u \in \partial \bar{f}(m)$ and for all $m \in \R^d$.
Let $\partial \bar{f} = \{(m,u) : m \in \R^d, u \in \partial \bar{f}(m)\}$ be the subgradient of $\bar{f}$.
Consider the map 
\[
H_t : \partial \bar{f} \to \R^d : (m,u) = (1-t)  u+ m.
\]
We claim that, for $t\in [0,1)$, $H_t$ is a bijection from $ \partial \bar{f}$ onto $H_t( \partial \bar{f})$.
Indeed, if $H_t((m,u)) = H_t((m',u'))$, then
\[
(1-t)  u+ m = (1-t) u'+ m'
\]
and so 
\[
(1-t)(u-u') = m'-m
\]
and so, taking the scalar product with $m-m'$ we get
\[
(1-t)(u-u')\cdot (m-m') = -\|m-m'\|^2.
\]
Since the function $\bar{f}$ is convex, the left hand side is $\geq0$, and so $m=m'$, and therefore $u=u'$ since $t\neq 1$. Finally, the function $w_t$ defined by
\[
w_t (x) = p_2 (H_t^{-1}(x)),\qquad \forall x \in H_t( \partial \bar{f}),
\]
with $p_2((m,u)) = u$, $(m,u) \in \R^d \times \R^d$, satisfies all the requirements.

Let us now show that $X$ is a Markov process w.r.t the filtration $\mathcal{F}$, as soon as $M$ is Markov.
We want to prove that, for all $0\leq t<u\leq 1$, it holds
\[
\esp [h(X_{u}) | \mc F_t] = \esp [h(X_{u}) | X_t],\qquad a.s,
\]
for all bounded continuous function $h$. It is enough to consider functions $h$ of the form $h_\lambda(x)=e^{i \lambda \cdot x}$, $x\in \R^d$, $\lambda \in \R^d$.
Note that
\[
X_u = (1-u)(X_0-\nabla \varphi(X_0)) + M_u = (1-u)\omega_t(X_t) + M_u
\]
and so
\[
\esp [h_\lambda(X_{u}) | \mc F_t] = h_{\lambda}((1-u)\omega_t(X_t))\esp [h_\lambda(M_{u}) | \mc F_t]=h_{\lambda}((1-u)\omega_t(X_t))\esp [h_\lambda(M_{u}) |M_t],
\]
using that $M$ is Markov.
Since $M_t=X_t-(1-t)w_t(X_t)$ is $\sigma(X_t)$-measurable, we conclude  that $\esp [h_\lambda(X_{u}) | \mc F_t]$ is $\sigma(X_t)$-measurable. This easily implies that $\esp [h_\lambda(X_{u}) | \mc F_t]=\esp [h_\lambda(X_{u}) |X_t]$ and ends the proof.
\end{proof}

 \begin{remark}\label{rem:alternative}Let $X$ be an optimal process as in Item $(iii)$ of Theorem \ref{thm:BBbar} and let $\mu_s = \mathrm{Law}(X_s)$, $s\in [0,1]$.
 \begin{itemize}
\item It is instructive to analyse the equality cases in the proof of \eqref{eq:geodbar}. For all $0\leq s<t\leq 1$, we get 
 \begin{align*}
 \overline{\mathcal{T}}_2(\mu_t | \mu_s) & \leq \esp[\|\esp [X_t-X_s | X_s]\|^2]\\
 & =  (t-s)^2 \esp[\|\esp [ \nabla \varphi (X_0)-X_0 | X_s]\|^2]\\
 & \leq (t-s)^2 \esp[\esp [\| \nabla \varphi (X_0)-X_0 \|^2 | X_s]]\\
 & = (t-s)^2  \overline{\mathcal{T}}_2^{1/2}(\nu | \mu)\\
 & =  \overline{\mathcal{T}}_2(\mu_t | \mu_s).
 \end{align*}
Thus, by the equality case of Jensen inequality at the third line, we obtain that $X_0-\nabla\varphi(X_0)$ is in fact $X_s$-measurable.
In other words, we recover that, for all $s\in[0,1)$, there exists a measurable function $w_s:\R^d\to\R^d$ such that $X_0-\nabla\varphi(X_0)=w_s(X_s)$ almost surely, a property that was used in the proof of Item $(iv)$ of Theorem \ref{thm:BBbar}.
\item The reasoning above also shows that $(X_s,X_t)$ is an optimal coupling between $\mu_s$ and $\mu_t$. 
\item Let $s\in [0,1]$ and denote by $\overline{\mu_s}$ the backward projection of $\mu_s$ on $\nu(=\mu_1)$ in the convex order. Since $(X_s,X_1)$ is an optimal coupling for $\overline{\mc T_2}(\nu|\mu_s)$, it follows from \eqref{eq:decomposition} that $\esp[X_1 | X_s] \sim \overline{\mu_s}.$
But, 
\[
\esp[X_1 | X_s] = \esp[M_1 | X_s] = \esp[\esp[M_1|\mathcal{F}_s]| X_s] = \esp[M_s|X_s] = M_s,
\]
where we used the fact that $M$ is a martingale such that  $M_s$ is $\sigma(X_s)$-measurable, as shown in the proof of Item $(iv)$ of Theorem \ref{thm:BBbar} or in the first Item of this remark. We thus conclude that $M_s \sim \overline{\mu_s}.$
\end{itemize}
\end{remark}

\section{Martingale transport toward forward projections}\label{sec:forward-proj}
The following proposition is the main result of this section. It establishes a correspondence between martingales joining $\mu$ to one of its forward projections $\tilde{\nu}$ and those joining $\bar{\mu}$ to $\nu.$
\begin{proposition}\label{prop:marti-transfer}
Let $\tilde{\nu}$ be a forward projection of $\mu$ on $\nu$ in the convex order.
If $(N_t)_{t\in [0,1]}$ is a martingale such that $N_0\sim \mu$ and $N_1\sim \tilde{\nu}$ then $M_t = \nabla \varphi(N_t)$, $t\in [0,1]$, is a martingale such that $M_0 \sim \bar{\mu}$ and $M_1 \sim \nu$.
\end{proposition}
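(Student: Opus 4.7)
The proof splits naturally into verifying the two marginal laws and then the martingale property. The first part is essentially bookkeeping using Lemma \ref{lem:forward-proj}. The second part rests on the observation that $\nabla \bar g$ is constant on each cell of the paving $\{\overline{D_{\bar g}(z)}\}_z$, which is precisely the paving that traps $N$.

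\textbf{Marginals.} The identity $M_0\sim \bar\mu$ is immediate from $\bar\mu = \nabla\varphi_\#\mu$. For $M_1\sim \nu$, I would use Lemma \ref{lem:forward-proj}: for any optimal coupling $\pi$ of $W_2^2(\tilde\nu,\nu)$, one has $x\in \partial\varphi^*(y)$ for $\pi$-almost every $(x,y)$. By Fenchel duality this is equivalent to $y\in \partial \varphi(x)$, and since $\varphi$ is continuously differentiable this forces $y=\nabla \varphi(x)$. Thus $\pi$ is concentrated on the graph of $\nabla\varphi$, so $\nabla \varphi_\#\tilde\nu = \nu$ and therefore $M_1 = \nabla \varphi(N_1)\sim \nu$.

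\textbf{Martingale property.} Denote $\alpha_t := \mathrm{Law}(N_t)$. Since $N$ is a martingale with $N_0\sim \mu$ and $N_1\sim \tilde\nu$, one has $\mu \leq_c \alpha_t \leq_c \tilde\nu$. Lemma \ref{lem:forward-proj} gives $\int \bar g\,\ud \mu = \int \bar g\,\ud\tilde\nu$, so $\int \bar g\,\ud \alpha_t = \int \bar g\,\ud \mu$ as well. Applying Item (i) of Proposition \ref{prop:partitionbis} with $\alpha = \mu$, $\beta = \alpha_t$ and $f = \bar g$ to the conditional law of $N_t$ given $N_0$, one obtains that, for each $t\in [0,1]$, $N_t \in \overline{D_{\bar g}(N_0)}$ almost surely. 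By Item (v) of Lemma \ref{lemtopo}, $\partial \bar g$ is constant on each cell $D_{\bar g}(z)$; since $\bar g$ is continuously differentiable (Proposition \ref{prop-lienfg}), this constant equals $\{\nabla \bar g(z)\}$, and continuity of $\nabla \bar g$ extends the equality to the closure $\overline{D_{\bar g}(z)}$. Consequently $\nabla \bar g(N_t) = \nabla \bar g(N_0)$ almost surely, for every $t$.

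\textbf{Conclusion.} Using the relation $\nabla \varphi(x) = x - \nabla \bar g(x)$ from Proposition \ref{prop-lienfg}, one may write $M_t = N_t - \nabla \bar g(N_0)$ a.s., so that for every $0\leq s\leq t\leq 1$,
\[
\esp[M_t\,|\,\mathcal{F}_s] = \esp[N_t\,|\,\mathcal{F}_s] - \nabla \bar g(N_0) = N_s - \nabla \bar g(N_0) = N_s - \nabla \bar g(N_s) = M_s,
\]
where in the first equality we used that $N_0$ is $\mathcal{F}_s$-measurable and in the second that $N$ is a martingale. The main delicate point is not of a technical nature but conceptual: one must apply the paving invariance with the dual potential $\bar g$ (relying on the saturation from Lemma \ref{lem:forward-proj}) rather than with $\bar f$, and then exploit the fact that the specific combination $\mathrm{Id} - \nabla \bar g = \nabla \varphi$ transforms $N$ into a martingale precisely because $\nabla \bar g$ has been frozen at its initial value $\nabla \bar g(N_0)$ along the trajectory.
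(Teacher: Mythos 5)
Your proof is correct and follows essentially the same route as the paper's: the marginal $M_1\sim\nu$ via Lemma \ref{lem:forward-proj}, and the martingale property via the $\bar g$-paving that traps $N$, after which the identity $\nabla\varphi = \mathrm{Id} - \nabla\bar g$ with $\nabla\bar g$ frozen along the cell gives $M_t - M_s = N_t - N_s$. The only cosmetic difference is that you derive the constancy of $\nabla\bar g$ on $\overline{D_{\bar g}(z)}$ directly from Lemma \ref{lemtopo}(v) plus continuity, whereas the paper packages the same fact as Proposition \ref{prop:Cbarg}.
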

Note that the conclusion of this result is surprising, since the image of a martingale under a non-linear transformation is in general not a martingale.
The proof of Proposition \ref{prop:marti-transfer}  relies on the following  simpler description of the sets $C_{\bar{g}}(x)$, $x\in \R^d$.

\begin{proposition}\label{prop:Cbarg}
For all $x\in \R^d$, 
\[
C_{\bar{g}}(x) = \{u \in \R^d : \nabla \varphi(u) = \nabla \varphi(x)+u-x\},
\]
where $\nabla \varphi$ is the transport map between $\mu$ and $\bar{\mu}$.
\end{proposition}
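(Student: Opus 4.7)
The proof plan is quite short because the statement follows almost immediately from Lemma \ref{lemtopo}(i) applied to the convex function $\bar g$, together with the explicit formula $\nabla \bar g(x)=x-\nabla\varphi(x)$ recorded in Proposition \ref{prop-lienfg}.

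First I would recall from Proposition \ref{prop-lienfg} that $\bar g = Q_2\bar f = \tfrac{\|\cdot\|^2}{2}-\varphi$ is continuously differentiable on the whole of $\R^d$ with gradient $\nabla \bar g(z)=z-\nabla\varphi(z)$. In particular $\mathrm{Dom}(\bar g)=\R^d$ and, for every $z\in\R^d$, the subdifferential reduces to a singleton $\partial \bar g(z)=\{\nabla\bar g(z)\}=\{z-\nabla\varphi(z)\}$.

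Next I would invoke the second characterization of $C_f(z)$ obtained in Lemma \ref{lemtopo}(i), namely
\[
C_{\bar g}(x)=\{u\in\mathrm{Dom}(\bar g):\partial \bar g(x)\subset \partial \bar g(u)\}.
\]
Because $\partial\bar g(x)$ and $\partial\bar g(u)$ are both singletons, the inclusion is equivalent to the equality $\nabla \bar g(u)=\nabla \bar g(x)$, i.e.
\[
u-\nabla\varphi(u)=x-\nabla\varphi(x).
\]
Rearranging gives $\nabla\varphi(u)=\nabla\varphi(x)+u-x$, which is exactly the condition describing the right-hand side of the proposition.

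There is no serious obstacle here; the only conceptual point worth emphasising in the written proof is that everywhere-differentiability of $\bar g$ turns the inclusion of subdifferentials supplied by Lemma \ref{lemtopo}(i) into an equality of gradients, so that the characterization of $C_{\bar g}(x)$ collapses to the simple affine relation $\nabla\varphi(u)-\nabla\varphi(x)=u-x$. I would not need to verify convexity of the resulting set separately, since Lemma \ref{lemtopo}(i) already guarantees this.
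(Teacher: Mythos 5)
Your proof is correct and matches the paper's own argument exactly: both recall $\nabla\bar g(z)=z-\nabla\varphi(z)$ from the formula $\bar g=\tfrac{\|\cdot\|^2}{2}-\varphi$, then apply the characterization $C_{\bar g}(x)=\{u:\partial\bar g(x)\subset\partial\bar g(u)\}$ from Lemma~\ref{lemtopo}(i), reducing to an equality of gradients since $\bar g$ is differentiable. No gaps.
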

In other words, the pieces of the space that are stable by the martingales transporting $\mu$ onto $\tilde{\nu}$ correspond to the cells where the transport map $\nabla \varphi$ sending $\mu$ onto $\bar{\mu}$ acts as a translation. 
\proof Recall that $\bar{g}=Q_2\bar{f} = \frac{|\,\cdot\,|^2}{2} - \varphi$ is a continuously differentiable convex function defined on $\R^d$, so that for any $x\in \R^d$, 
\[\partial \bar g(x)=\{\nabla \bar g(x)\}=\{x-\nabla\varphi(x)\}.\]
According to Lemma \ref{lemtopo} $(i)$, it follows that 
\[C_{\bar{g}}(x)= \{u \in \R^d : \partial \bar{g}(x)\subset \partial \bar{g}(u)\}
     =\{u \in \R^d : \nabla \varphi(u) = \nabla \varphi(x)+u-x\}.\]
     \endproof

\proof[Proof of Proposition \ref{prop:marti-transfer}.]
It is clear that $M_0=\nabla \varphi(N_0) \sim \bar{\mu}$. Let us show that $M_1=\nabla \varphi(N_1)$ has law $\nu$. This amounts to proving $\nabla \varphi_\#\tilde{\nu}=\nu$. Consider $(U,V)$ an optimal coupling for $W_2^2(\tilde{\nu},\nu)$. Then according to Lemma \ref{lem:forward-proj}, $U \in \partial \varphi^*(V)$ a.s. But, then $V\in \partial \varphi(U) = \{\nabla \varphi(U)\}$ and so $V= \nabla\varphi(U)$ which proves that $\nabla \varphi_\#\tilde{\nu}=\nu$ and $M_1 \sim \nu.$
Finally, let us show that $M$ is also a martingale. Let $0\leq s<t\leq 1$ ; since $N_s,N_t \in C_{\bar{g}}(N_0)$ a.s, we get according to Proposition \ref{prop:Cbarg} that
\[
\nabla \varphi(N_t) = \nabla \varphi(N_s) + N_t-N_s
\]
a.s, and thus $\mathbb{E}[M_t | \mathcal{F}_s] = M_s$ a.s, which completes the proof.
\endproof
In particular, for any martingale $N$ between $\mu$ and $\tilde{\nu}$, it holds $\nabla \varphi(N_t) \in C_{\bar{f}}(\nabla \varphi(N_0))$ a.s. This suggests that the sets $C_{\bar{f}}(z)$ could be described in terms of the sets $C_{\bar{g}}(x)$, as the following result confirms.
\begin{proposition}
For all $z\in \mathrm{Dom}(\bar{f})$, 
\[
C_{\bar{f}}(z) = \bigcap_{x\in \nabla \varphi^{-1}(\{z\})} \nabla \varphi(C_{\bar{g}}(x)).
\]
\end{proposition}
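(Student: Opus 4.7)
The plan is to reduce the claimed identity to the simpler cell-level statement that, for every $a\in\partial\bar f(z)$ and with $x:=a+z$, one has $C_{\bar f}^a(z)=\nabla\varphi(C_{\bar g}(x))$, and then to take the intersection of these equalities. First, I would use Proposition~\ref{prop-lienfg}~$(ii)$ together with the equivalence \eqref{equiv} to set up the bijection $a\leftrightarrow x:=a+z$ between $\partial\bar f(z)$ and $\nabla\varphi^{-1}(\{z\})$: indeed, $a\in\partial\bar f(z)$ is equivalent to $\nabla\varphi(a+z)=z$ and also to $\nabla\bar g(a+z)=a$. This bijection is what will allow me to replace the intersection over $a\in\partial\bar f(z)$ by the intersection over $x\in\nabla\varphi^{-1}(\{z\})$ that appears on the right-hand side.

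Next, I would fix $a\in\partial\bar f(z)$ and $x=a+z$ and establish the cell-level equality. On the $\bar g$-side, Proposition~\ref{prop:Cbarg} combined with the identity $\nabla\bar g=\mathrm{id}-\nabla\varphi$ recasts $C_{\bar g}(x)$ as $\{u\in\R^d:\nabla\bar g(u)=a\}$, so any $u\in C_{\bar g}(x)$ decomposes as $u=a+\nabla\varphi(u)$. Setting $y:=\nabla\varphi(u)$, I would deduce that $\nabla\varphi(C_{\bar g}(x))=\{y\in\R^d:\nabla\bar g(a+y)=a\}$, which by \eqref{equiv} means exactly $\{y:a\in\partial\bar f(y)\}$. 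On the $\bar f$-side, Proposition~\ref{prop-lienfg}~$(i)$ gives $C_{\bar f}^a(z)=\{y:\nabla\bar g(a+y)=\nabla\bar g(a+z)\}$, and using $\nabla\bar g(a+z)=a$ this coincides with the same set. The map $u\mapsto y=u-a$ then provides the explicit bijection between $C_{\bar g}(x)$ and $C_{\bar f}^a(z)$ realised by $\nabla\varphi$.

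Finally, I would intersect the cell-level equalities over $a\in\partial\bar f(z)$ (equivalently, over $x\in\nabla\varphi^{-1}(\{z\})$ via the bijection from the first step) and conclude with the definition $C_{\bar f}(z)=\bigcap_{a\in\partial\bar f(z)}C_{\bar f}^a(z)$. The main technical point is the bookkeeping with \eqref{equiv}: the key observation that makes everything collapse is that on $C_{\bar g}(x)$ the map $\nabla\varphi$ acts as translation by $-a$, so its image is a literal translate of $C_{\bar g}(x)$ and therefore easy to compare with $C_{\bar f}^a(z)$. There is no analytic difficulty; the proof is purely a consequence of convex duality encoded in \eqref{equiv} together with Propositions~\ref{prop-lienfg} and \ref{prop:Cbarg}.
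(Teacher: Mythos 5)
Your proof is correct and follows essentially the same route as the paper: reduce to the cell-level identity $C^a_{\bar f}(z)=\nabla\varphi(C_{\bar g}(x))$ for $x=a+z$ via the bijection from Proposition~\ref{prop-lienfg}~$(ii)$, then intersect over $a\in\partial\bar f(z)$. The only cosmetic difference is in the middle step: you identify both sides with the common set $\{y:\nabla\bar g(a+y)=a\}=\{y:a\in\partial\bar f(y)\}$, whereas the paper computes $C^a_{\bar f}(z)=C_{\bar g}(x)+\{z-x\}$ and then notes $C_{\bar g}(x)+\{z-x\}=\nabla\varphi(C_{\bar g}(x))$; both hinge on the same observation that $\nabla\varphi$ acts as translation by $-a=z-x$ on $C_{\bar g}(x)$.
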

\proof
According to Lemma \ref{prop-lienfg}, for all $a \in \partial \bar{f}(z)$, 
\[
C^a_{\bar{f}}(z) = \{y \in \R^d : \nabla \bar{g}(a+z) =\nabla \bar{g}(a+y)\} = \{y \in \R^d : \nabla \varphi (a+z) =\nabla \varphi(a+y) + z-y\} 
\]
and $\partial \bar{f}(z)=\{ \nabla \bar{g}(x) : \nabla \varphi(x)=z\}= \{x-z : \nabla \varphi(x)=z\}= \nabla \varphi^{-1}(\{z\})-\{z\}$. 
Thus, letting $a+z=x \in \nabla \varphi^{-1}(\{z\})$, we obtain
\begin{align*}
C^a_{\bar{f}}(z) &= \{y \in \R^d : \nabla \varphi(x+y-z) = \nabla  \varphi(x)   +y-z\}\\
& = \{y \in \R^d : x+y-z \in C_{\bar{g}}(x)\} \\
&= C_{\bar{g}}(x)+ \{z-x\}\\
& = \nabla \varphi(C_{\bar{g}}(x)),
\end{align*}
since, according to Proposition \ref{prop:Cbarg},
\begin{align*}
C_{\bar{g}}(x)& =\{u\in \R^d : \nabla \varphi(u) = \nabla \varphi(x) + u-x\}\\
& = \{u\in \R^d : \nabla \varphi(u) =  z + u-x\}\\
& = \nabla \varphi(C_{\bar{g}}(x)).
\end{align*}
Finally, we get
\[
C_{\bar{f}}(z) =\bigcap_{x\in \nabla \varphi^{-1}(\{z\})} \nabla \varphi(C_{\bar{g}}(x)).
\]
\endproof

If the function $\bar{f}$ is differentiable, then similar results can be stated for the martingales involved in the transport from $\bar{\mu}$ to $\nu$.

\begin{proposition}\label{prop:Cbarf-easy}
Suppose that  $\bar{f}:\R^d \to \R$ is a differentiable dual optimizer for $\bar{\mathcal{T}}_2(\nu|\mu)$.
\begin{enumerate}
\item For all $z\in \R^d$, 
\[
C_{\bar{f}}(z) = \{u \in \R^d : \nabla \varphi^*(u) = \nabla \varphi^*(z)+u-z\},
\]
where $\varphi^* = \bar{f} + \frac{\|\,\cdot\,\|^2}{2}$ with $\nabla \varphi^*$ being the optimal transport map between $\nu$ and $\tilde{\nu}$.
\item If $(M_t)_{t\in [0,1]}$ is a martingale such that $M_0\sim \bar{\mu}$ and $M_1\sim \nu$ then $N_t = \nabla \varphi^*(M_t)$, $t\in [0,1]$, is a martingale such that $N_0 \sim \mu$ and $N_1 \sim \tilde{\nu}$.
\end{enumerate}
\end{proposition}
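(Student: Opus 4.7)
The proof should proceed in close analogy with Propositions \ref{prop:Cbarg} and \ref{prop:marti-transfer}, simply exchanging the roles of $\bar{g}$ and $\bar{f}$ and exploiting the Fenchel duality between $\varphi$ and $\varphi^*$. The key new ingredient is the assumed differentiability of $\bar{f}$, which makes $\varphi^* = \bar{f} + \|\cdot\|^2/2$ differentiable on the whole $\R^d$, so that $\nabla \varphi^*$ is a well-defined single-valued map on $\R^d$.

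For Item $(i)$, the plan is to invoke Item $(i)$ of Lemma \ref{lemtopo}, which gives $C_{\bar{f}}(z)=\{u\in\mathrm{Dom}(\bar{f}) : \partial\bar{f}(z)\subset\partial\bar{f}(u)\}$. Since $\bar{f}$ is differentiable, both sides of the inclusion are singletons, so the condition reduces to $\nabla\bar{f}(u)=\nabla\bar{f}(z)$. Substituting $\nabla\bar{f}=\nabla\varphi^*-\mathrm{id}$ yields exactly the claimed identity $\nabla\varphi^*(u)-u=\nabla\varphi^*(z)-z$.

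For Item $(ii)$, the plan is to verify separately the three required properties of $N=\nabla\varphi^*(M)$:
\begin{itemize}
\item \emph{Initial marginal.} Since $\nabla\varphi$ pushes $\mu$ onto $\bar{\mu}$ and $\varphi^*$ is differentiable, Fenchel duality of subgradients gives $x\in\partial\varphi^*(\nabla\varphi(x))=\{\nabla\varphi^*(\nabla\varphi(x))\}$ for every $x$, so $\nabla\varphi^*\circ\nabla\varphi=\mathrm{id}$ pointwise. Pushing forward yields $\nabla\varphi^*_{\#}\bar{\mu}=\mu$, hence $N_0\sim\mu$.
\item \emph{Terminal marginal.} Let $\pi$ be an optimal coupling for $W_2^2(\tilde{\nu},\nu)$. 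By Lemma \ref{lem:forward-proj}, $x\in\partial\varphi^*(y)$ for $\pi$-a.e.\ $(x,y)$, and differentiability of $\varphi^*$ forces $x=\nabla\varphi^*(y)$. This identifies $\tilde{\nu}=\nabla\varphi^*_{\#}\nu$ without any absolute continuity hypothesis on $\nu$, so $N_1\sim\tilde{\nu}$.
\item \emph{Martingale property.} Fix $0\le s\le t\le 1$. Applying Proposition \ref{prop:partitionbis}(1) to the conditional kernels of $M_s$ and $M_t$ given $M_0$ (as in the proof of Corollary \ref{cor:invariant-paving}; this uses only the convex order relations $\bar{\mu}\le_c \mathrm{Law}(M_s)\le_c\mathrm{Law}(M_t)\le_c\nu$ and the saturation $\int\bar{f}\,\ud\bar{\mu}=\int\bar{f}\,\ud\nu$ from Lemma \ref{lem:equality-barf}), one obtains $M_s,M_t\in\overline{D_{\bar{f}}(M_0)}\subset C_{\bar{f}}(M_0)$ almost surely. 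By Item $(i)$ this gives $N_t=N_0+M_t-M_0$ and $N_s=N_0+M_s-M_0$ almost surely, so $N_t-N_s=M_t-M_s$. Taking $\mathbb{E}[\,\cdot\,|\mathcal{F}_s]$ and using that $M$ is a martingale yields $\mathbb{E}[N_t|\mathcal{F}_s]=N_s$.
\end{itemize}

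No serious obstacle is anticipated; the only point requiring care is that the trajectory-wise statement of Corollary \ref{cor:invariant-paving} relies on right-continuity, whereas here we need only the fixed-time version from Proposition \ref{prop:partitionbis}(1), which is available for any martingale without path regularity. Everything else is a direct translation of the arguments used for Propositions \ref{prop:Cbarg} and \ref{prop:marti-transfer}.
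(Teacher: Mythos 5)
Your proof is correct and carries out, in full detail, exactly the analogy the paper invokes when it declares the proof ``identical'' to those of Propositions~\ref{prop:marti-transfer} and~\ref{prop:Cbarg}: Item~$(i)$ follows from Lemma~\ref{lemtopo}$(i)$ plus differentiability of $\bar f$, the marginals of $N$ come from Fenchel duality of subgradients and Lemma~\ref{lem:forward-proj}, and the martingale property follows from $M_s,M_t\in C_{\bar f}(M_0)$ a.s.\ together with Item~$(i)$. Your observation that the fixed-time Proposition~\ref{prop:partitionbis}$(1)$ suffices here (so the right-continuity hypothesis of Corollary~\ref{cor:invariant-paving} is not needed) is a correct and worthwhile precision.
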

\proof
The proof is identical to the proofs of Propositions \ref{prop:marti-transfer} and \ref{prop:Cbarg}
 and is thus omitted.
\endproof

\section{Quadratic barycentric transport between two Gaussian measures}\label{sec:Gaussian}
If $\mu,\nu$ are gaussian probability measures on $\R^d$, the following result shows that the backward projection of $\mu$ on $\nu$ in the convex order is also a Gaussian measure, and provides a formula to calculate $\overline{{\mathcal{T}}}_2(\nu|\mu)$ in terms of their means and covariance matrices.
\begin{proposition}\label{Gauss}
If $\mu=\mathcal N(m_\mu, \Sigma_\mu)$ and $\nu=\mathcal N(m_\nu, \Sigma_\nu)$ are two Gaussian distributions on $\R^d$, then 
\begin{equation}\label{eq:Tbar-Gaussian}
    \overline{{\mathcal{T}}}_2(\nu|\mu)=\inf_{\Sigma\leq \Sigma_\nu} W_2^2(\mu,\mathcal N(m_\nu, \Sigma))
    =|m_\nu-m_\mu|^2 + \inf_{\Sigma\leq \Sigma_\nu} \mathrm{Tr}\Big(\Sigma+\Sigma_\mu-2\big(\Sigma_\mu^{1/2} \Sigma\Sigma_\mu^{1/2}\big)^{1/2}\Big),
\end{equation}
    where the infimum runs over all covariance matrices $\Sigma$ (symmetric with non negative eigenvalues), such that $\Sigma\leq \Sigma_\nu$ (which means that $\Sigma_\nu-\Sigma$ is also a covariance matrix). 
    The backward projection of $\mu$ on $\nu$ in the convex order is $\bar{\mu} = \mathcal{N}(m_\nu,\overline{\Sigma})$, with $\overline{\Sigma} \leq \Sigma_\nu$ the unique minimizer in \eqref{eq:Tbar-Gaussian}.
\end{proposition}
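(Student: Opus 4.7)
The plan is to combine the variational characterization $\overline{\mathcal{T}}_2(\nu|\mu) = \inf_{\eta \leq_c \nu}W_2^2(\mu,\eta)$ recalled in \eqref{eq:GJ20} with the classical Bures-Wasserstein formula, in order to reduce the infimum over all measures $\eta \leq_c \nu$ to an infimum over Gaussian competitors, for which the Wasserstein distance has an explicit expression in terms of moments.

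The first step is a moment-level description of the convex order against $\nu$: any measure $\eta$ with $\eta \leq_c \nu$ must have mean $m_\nu$ (testing $\leq_c$ on the affine functions $\pm u\cdot x$) and covariance $\Sigma_\eta$ satisfying $\Sigma_\eta \leq \Sigma_\nu$ in the Loewner order (testing on the convex quadratics $x\mapsto (u\cdot(x-m_\nu))^2$). Conversely, if $\Sigma \leq \Sigma_\nu$, then $\mathcal{N}(m_\nu,\Sigma)\leq_c \nu$, which follows from the convolution factorization $\nu = \mathcal{N}(m_\nu,\Sigma) \ast \mathcal{N}(0,\Sigma_\nu-\Sigma)$ and an application of Jensen's inequality to the inner integral for any convex test function. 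The second step is the Gelbrich lower bound: for arbitrary probability measures $\mu'$ and $\eta$ with finite second moments and moments $(m_{\mu'},\Sigma_{\mu'})$ and $(m_\eta,\Sigma_\eta)$,
\[
W_2^2(\mu',\eta) \geq |m_{\mu'}-m_\eta|^2 + \mathrm{Tr}\bigl(\Sigma_{\mu'}+\Sigma_\eta - 2(\Sigma_{\mu'}^{1/2}\Sigma_\eta \Sigma_{\mu'}^{1/2})^{1/2}\bigr),
\]
with equality whenever both measures are Gaussian. This can be derived by expanding $\|x-y\|^2$ in $W_2^2(\mu',\eta) = \inf \mathbb{E}\|X-Y\|^2$ and bounding the cross-covariance term via the classical inequality $\sup \mathrm{Tr}(\mathbb{E}[(X-m_{\mu'})(Y-m_\eta)^{\top}]) \leq \mathrm{Tr}((\Sigma_{\mu'}^{1/2}\Sigma_\eta \Sigma_{\mu'}^{1/2})^{1/2})$, which is saturated exactly by the Brenier coupling between the Gaussians sharing these moments.

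Combining the two ingredients would complete the proof. For any $\eta \leq_c \nu$, the first step forces $m_\eta = m_\nu$ and $\Sigma_\eta \leq \Sigma_\nu$, while Gelbrich together with its Gaussian equality case yields
\[
W_2^2(\mu,\eta) \geq W_2^2\bigl(\mu,\mathcal{N}(m_\nu,\Sigma_\eta)\bigr).
\]
Since $\mathcal{N}(m_\nu,\Sigma_\eta) \leq_c \nu$ is itself admissible, the infimum over $\eta \leq_c \nu$ coincides with the infimum over Gaussians $\mathcal{N}(m_\nu,\Sigma)$ with $\Sigma \leq \Sigma_\nu$, yielding \eqref{eq:Tbar-Gaussian} through the Bures-Wasserstein formula. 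The identification $\bar{\mu} = \mathcal{N}(m_\nu,\overline{\Sigma})$ then follows from the uniqueness of the backward projection recalled in Section \ref{Sec:Dualpot}. The only real subtlety is ensuring the Gelbrich bound is applied with its sharp equality case for Gaussian pairs; once this is in place, the reduction to Gaussian competitors is essentially automatic.
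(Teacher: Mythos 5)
Your proposal is correct and follows essentially the same route as the paper: both reduce the infimum in \eqref{eq:GJ20} to Gaussian competitors using the Gelbrich/Cuesta-Albertos lower bound (the paper's Lemma \ref{lem2}) together with the moment characterization of convex domination by $\nu$ (the paper's Lemma \ref{lem1}), and then invoke the closed form for $W_2$ between Gaussians. The only cosmetic difference is that you spell out the moment tests and the convolution argument inline, where the paper packages them as separate lemmas.
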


The proof of Proposition \ref{Gauss}, will rely on the following two classical lemmas.
\begin{lemma}\label{lem1} Let $\nu=\mathcal N(m_\nu, \Sigma_\nu)$ and $\eta=\mathcal N(m, \Sigma)$ then $\eta\leq_c \nu$ if and only if $m=m_\nu$ and $\Sigma\leq \Sigma_\nu$.  
\end{lemma}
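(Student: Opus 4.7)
The plan is to prove both implications of Lemma \ref{lem1} separately, using standard tools about the convex order applied to Gaussian distributions.

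For the necessary conditions (``only if''), I would simply test the convex order $\eta \leq_c \nu$ against well-chosen convex functions. First, for any $a\in\R^d$, the affine function $x \mapsto a\cdot x$ is both convex and concave, so applying the convex order inequality to $\pm(a\cdot x)$ yields $a\cdot m = a\cdot m_\nu$ for all $a$, forcing $m=m_\nu$. Then, applying the convex order to the convex function $x \mapsto (a\cdot x)^2$ and using the standard formula $\int (a\cdot x)^2\,\ud\rho = a^T \Sigma_\rho a + (a\cdot m_\rho)^2$ for a Gaussian $\rho$ with mean $m_\rho$ and covariance $\Sigma_\rho$, the mean terms cancel (since $m=m_\nu$) and one obtains $a^T \Sigma a \leq a^T \Sigma_\nu a$ for all $a\in\R^d$, i.e. $\Sigma \leq \Sigma_\nu$.

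For the sufficient conditions (``if''), assume $m=m_\nu$ and $\Sigma \leq \Sigma_\nu$, so that $\Sigma_\nu - \Sigma$ is a valid (symmetric, nonnegative) covariance matrix. The cleanest route is via Strassen's theorem: it suffices to exhibit a martingale coupling between $\eta$ and $\nu$. To this end, take $X \sim \eta = \mathcal{N}(m_\nu,\Sigma)$ and, independently, $Z \sim \mathcal{N}(0, \Sigma_\nu-\Sigma)$. Then by independence and additivity of Gaussian distributions, $X+Z \sim \mathcal{N}(m_\nu, \Sigma_\nu) = \nu$, while $\esp[X+Z\mid X] = X$ shows that $(X,X+Z)$ is a martingale coupling from $\eta$ to $\nu$. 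Alternatively, one may argue directly via Jensen: for any convex function $h$, $\int h\,\ud\nu = \esp[h(X+Z)] \geq \esp[h(\esp[X+Z\mid X])] = \esp[h(X)] = \int h\,\ud\eta$.

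There is no genuine obstacle here; the statement is a classical characterization and both directions reduce to one-line computations once the right test functions (for necessity) or auxiliary independent Gaussian (for sufficiency) are introduced. The only mild care needed is in the ``only if'' direction to handle the convex order inequality on affine functions in both orientations in order to extract the mean equality before separating out the covariance contribution in the quadratic test.
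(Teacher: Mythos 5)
Your proof is correct and follows essentially the same route as the paper: the sufficiency is obtained by exhibiting the martingale coupling $X\mapsto X+Z$ with $Z$ an independent $\mathcal N(0,\Sigma_\nu-\Sigma)$ and invoking Strassen (or Jensen directly), and the necessity by testing the convex order against affine and quadratic functions $a\cdot x$ and $(a\cdot x)^2$.
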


\begin{lemma}[Theorem 2.1, \cite{cuesta-albertosLowerBounds2Wasserstein1996}]\label{lem2} 
For any $\eta,\nu \in \mathcal{P}_2(\R^d)$, 
\[
W_2(\eta,\mu)\geq W_2\big(\mathcal N(m_\eta, \Sigma_\eta),\mathcal N(m_\mu, \Sigma_\mu)\big).
\]
\end{lemma}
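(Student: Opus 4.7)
The strategy is to combine the projection reformulation $\overline{\mc{T}}_2(\nu|\mu)=\inf_{\eta\leq_c\nu}W_2^2(\mu,\eta)$ recalled in \eqref{eq:GJ20} with the Gelbrich-type lower bound of Lemma~\ref{lem2} to show that the infimum over the convex-order sub-level set is already attained on its Gaussian slice.

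First I would observe that for any $\eta \leq_c \nu$, testing the convex-order inequality on affine functions (which are both convex and concave, so both $\varphi$ and $-\varphi$ are admissible test functions) forces $m_\eta = m_\nu$, and testing it on the quadratic forms $x\mapsto (v\cdot x)^2$ yields $v^\top \Sigma_\eta v \leq v^\top \Sigma_\nu v$, i.e.\ $\Sigma_\eta \leq \Sigma_\nu$. By Lemma~\ref{lem1}, the Gaussianisation $\eta^g := \mathcal{N}(m_\nu,\Sigma_\eta)$ therefore still satisfies $\eta^g \leq_c \nu$. The Gelbrich bound of Lemma~\ref{lem2} then gives $W_2(\mu,\eta) \geq W_2(\mathcal{N}(m_\mu,\Sigma_\mu),\mathcal{N}(m_\eta,\Sigma_\eta)) = W_2(\mu,\eta^g)$, so replacing $\eta$ by $\eta^g$ does not increase the cost while preserving admissibility. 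Combined with the trivially admissible direction from Lemma~\ref{lem1}, this shows that the infimum in \eqref{eq:GJ20} equals $\inf_{\Sigma\leq\Sigma_\nu} W_2^2(\mu,\mathcal{N}(m_\nu,\Sigma))$, proving the first equality.

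For the second equality, I would plug in the classical Bures--Wasserstein closed form $W_2^2(\mathcal{N}(m,\Sigma),\mathcal{N}(m',\Sigma'))=|m-m'|^2+\mathrm{Tr}(\Sigma+\Sigma'-2(\Sigma'^{1/2}\Sigma\Sigma'^{1/2})^{1/2})$ with $\mu$ and $\mathcal{N}(m_\nu,\Sigma)$, the $|m_\nu-m_\mu|^2$ term coming out of the infimum since it does not depend on $\Sigma$. For existence of $\overline{\Sigma}$, the set $\{\Sigma\geq 0 : \Sigma \leq \Sigma_\nu\}$ is a compact convex subset of the symmetric matrices and the functional $\Sigma\mapsto \mathrm{Tr}(\Sigma+\Sigma_\mu-2(\Sigma_\mu^{1/2}\Sigma\Sigma_\mu^{1/2})^{1/2})$ is continuous, so a minimiser exists; uniqueness follows immediately from the uniqueness of the backward projection $\bar\mu$ recalled in Section~\ref{Sec:Dualpot}, which forces any two Gaussian minimisers to be the same measure and hence to share the same covariance. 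The identification $\bar\mu=\mathcal{N}(m_\nu,\overline{\Sigma})$ is a by-product of this argument.

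The only genuine point requiring care is the Gaussianisation step — namely, verifying that $\eta^g$ remains dominated by $\nu$ in the convex order — but this is exactly what the combination of Lemma~\ref{lem1} and the moment constraints extracted from $\eta \leq_c \nu$ delivers. Everything else reduces to invoking the Gelbrich bound and the Bures--Wasserstein formula.
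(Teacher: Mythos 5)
Your proposal does not prove the statement it was asked to prove. The statement is Lemma~\ref{lem2} itself, i.e.\ the Gelbrich-type lower bound
\[
W_2(\eta,\mu)\;\geq\; W_2\big(\mathcal N(m_\eta,\Sigma_\eta),\mathcal N(m_\mu,\Sigma_\mu)\big),
\]
whereas what you have written is a proof of Proposition~\ref{Gauss} (the Gaussian formula for $\overline{\mathcal{T}}_2$), and that proof explicitly \emph{invokes} ``the Gelbrich bound of Lemma~\ref{lem2}'' as an ingredient. As an argument for Lemma~\ref{lem2} this is circular: the inequality is assumed, never established. (For what it is worth, your argument for Proposition~\ref{Gauss} is essentially the one the paper gives for that proposition; but that is not the target here. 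The paper itself does not reprove Lemma~\ref{lem2} and simply cites \cite{cuesta-albertosLowerBounds2Wasserstein1996}.)

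A self-contained proof of Lemma~\ref{lem2} is short and worth knowing. Let $\pi$ be any coupling of $\eta$ and $\mu$ with finite second moments, with mean vector $(m_\eta,m_\mu)\in\R^{2d}$ and covariance matrix $\left(\begin{smallmatrix}\Sigma_\eta & C\\ C^{T} & \Sigma_\mu\end{smallmatrix}\right)$. The Gaussian measure $\hat\pi=\mathcal N\big((m_\eta,m_\mu),\left(\begin{smallmatrix}\Sigma_\eta & C\\ C^{T} & \Sigma_\mu\end{smallmatrix}\right)\big)$ on $\R^d\times\R^d$ has marginals $\mathcal N(m_\eta,\Sigma_\eta)$ and $\mathcal N(m_\mu,\Sigma_\mu)$, so it is an admissible coupling for the right-hand side; moreover the quadratic cost $\int\|x-y\|^2\,\ud\rho(x,y)=\|m_\eta-m_\mu\|^2+\mathrm{Tr}(\Sigma_\eta)+\mathrm{Tr}(\Sigma_\mu)-2\,\mathrm{Tr}(C)$ depends on $\rho$ only through its first and second moments, hence is the same for $\pi$ and $\hat\pi$. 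Therefore $W_2^2(\mathcal N(m_\eta,\Sigma_\eta),\mathcal N(m_\mu,\Sigma_\mu))\leq\int\|x-y\|^2\,\ud\pi$, and taking the infimum over $\pi\in\Pi(\eta,\mu)$ gives the claim. You should supply an argument of this kind (or an equivalent dual argument with quadratic potentials) rather than reducing the lemma to itself.
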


\begin{proof}[Proof of Proposition \ref{Gauss}]
    The  second equality equality in \eqref{eq:Tbar-Gaussian} directly follows from the following well known expression for the Wasserstein distance between two Gaussian distributions:  (see e.g. \cite[Theorem 2.2]{Tak11}),
    \[
    W_2^2(\mathcal N(m, \Sigma),\mathcal N(m', \Sigma'))= |m-m'|^2 +\mathrm{Tr}\Big(\Sigma+   \Sigma'-2\big(\Sigma'^{1/2} \Sigma\Sigma'^{1/2}\big)^{1/2}\Big).
    \]
    Let us prove the first equality in \eqref{eq:Tbar-Gaussian}. According to \eqref{eq:GJ20} and  applying    Lemma \ref{lem2} and then Lemma \ref{lem1} yields
\begin{align*}
\overline{\mathcal{T}}_2(\nu|\mu)=\inf_{\eta\leq_c \nu } W_2^2(\mu,\eta)
\geq \inf_{\eta\leq_c \nu } W_2^2(\mu,\mathcal N
(m_\eta, \Sigma_\eta))
= \inf_{\Sigma\leq \Sigma_\nu }W_2^2(\mu,\mathcal N
(m_\nu, \Sigma))
&= \inf_{\eta' \text{ Gaussian},\ 
 \eta'\leq_c \nu }W_2^2(\mu,\eta'
)\\
&\geq \overline{\mathcal{T}}_2(\nu|\mu).
\end{align*}
Therefore, equality holds at every step.
\end{proof}

For the sake of completeness, we include the proof of Lemma \ref{lem1}.
\begin{proof}[Proof of Lemma \ref{lem1}] Let $\nu=\mathcal N(m_\nu, \Sigma_\nu)$ and $\eta=\mathcal N(m, \Sigma)$. Assume $m=m_\nu$ and $\Sigma\leq \Sigma_\nu$. If $X$ and $Y$ are two independent random vectors with respective laws $\mathcal \mc N(m,\Sigma)$ and $\mc N(0,\Sigma_\nu-\Sigma)$, then setting $Z=X+Y$, one easily checks that $(X,Z)$ is a martingale and that $Z\sim N(m_\nu,\Sigma_\nu)$. As a consequence, according to (the easy case) of Strassen's characterization of convex order \cite{Strassen}, one has $\eta\leq_c \nu$. Conversely if $\eta\leq_c \nu$ then  for any real convex function $f$,
\[\int f \,\ud\eta\leq \int f \,\ud\nu.\]
Applying this inequality with $f(x)=x_i$ or $f(x)=-x_i$, $i\in \{1,\ldots,d\}$, gives $m=m_\nu$. And choosing $f(x)=\langle x, v\rangle^2$ with  $v\in \R^d$ provides $\Sigma\leq\Sigma_\nu$.
\end{proof}

In the case where $\Sigma_\mu$ and $\Sigma_\nu$ commute, the backward projection $\bar{\mu}$ is easy to identify, as shows the following result. If $D,D'$ are two diagonal matrices, we will denote $\min(D,D') = [\min(D_{i,j},D'_{i,j})]_{1\leq i,j \leq d}$.  
\begin{proposition}\label{prop-Gaussian-Commute}
    Suppose that $\Sigma_\mu\Sigma_\nu=\Sigma_\nu\Sigma_\mu$ and let $P$ be an orthonormal matrix $P$ such that $D_\mu:=P\Sigma_\mu P^T$ and  $D_\nu:=P\Sigma_\nu P^T$ are both diagonal. Setting $\bar{D}:=\min(D_\mu, D_\nu)$, the backward projection of $\mu=\mc N(m_\mu,\Sigma_\mu)$ on $\nu=\mc N(m_\nu,\Sigma_\nu)$ is $\bar \mu = \mc N(m_\nu,\bar \Sigma)$, with $\bar \Sigma=P^T \bar{D} P$.
\end{proposition}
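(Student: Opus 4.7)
The plan is to reduce to the diagonal case by the joint orthogonal diagonalization and then identify $\bar \Sigma$ via a convex-analytic verification of the variational characterization in Proposition \ref{Gauss}. Since the Wasserstein distance and the convex order are invariant under simultaneous rigid motion (conjugating $\mu$ and $\nu$ by the isometry $x\mapsto Px$ transports $\overline{\mc{T}}_2(\nu|\mu)$ and the backward projection correspondingly), it suffices to treat the case $\Sigma_\mu = D_\mu$, $\Sigma_\nu = D_\nu$ diagonal and then push the answer back through $P^T$. In this reduced setting the claim becomes that the minimizer of
\[
F(\Sigma) := \mathrm{Tr}\!\left(\Sigma + D_\mu - 2\bigl(D_\mu^{1/2}\Sigma D_\mu^{1/2}\bigr)^{1/2}\right)
\]
over the convex set $\{\Sigma : 0 \le \Sigma \le D_\nu\}$ is the diagonal matrix $\bar D = \min(D_\mu,D_\nu)$, which by Proposition \ref{Gauss} is equivalent to the conclusion.

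Next I would establish that $F$ is \emph{convex} on the positive semidefinite cone. This rests on the operator concavity of $X\mapsto X^{1/2}$ (Löwner's theorem), which implies that $\Sigma \mapsto \mathrm{Tr}\bigl((D_\mu^{1/2}\Sigma D_\mu^{1/2})^{1/2}\bigr)$ is concave; together with the linearity of $\Sigma\mapsto \mathrm{Tr}(\Sigma)$ this gives convexity of $F$. Consequently, a feasible $\Sigma^{*}$ is a global minimizer as soon as the first-order condition
\[
\mathrm{Tr}\bigl(\nabla F(\Sigma^{*})(\Sigma-\Sigma^{*})\bigr)\ge 0,\qquad \forall\,0\le \Sigma\le D_\nu,
\]
holds. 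A direct computation gives $\nabla F(\Sigma) = I - D_\mu^{1/2}(D_\mu^{1/2}\Sigma D_\mu^{1/2})^{-1/2} D_\mu^{1/2}$, and evaluating at $\Sigma^{*}=\bar D$ one finds that $\nabla F(\bar D)$ is diagonal with $i$-th entry $1-\sqrt{\alpha_i/\min(\alpha_i,\beta_i)}$; this is $0$ whenever $\alpha_i\le\beta_i$ and strictly negative when $\alpha_i>\beta_i$.

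It remains to check the first-order inequality. Writing $M := -\nabla F(\bar D)\ge 0$, diagonal and supported on the index set $J := \{i : \alpha_i>\beta_i\}$, one reduces the question to showing $\mathrm{Tr}(M(\bar D - \Sigma))\ge 0$ for every $0\le \Sigma\le D_\nu$. On each $i\in J$ we have $\bar D_{ii}=\beta_i = (D_\nu)_{ii}$, and the constraint $D_\nu-\Sigma\ge 0$ forces the diagonal inequality $\Sigma_{ii}\le \beta_i$, so $(\bar D - \Sigma)_{ii}\ge 0$; since $M_{ii}\ge 0$ and both are diagonal, the trace is nonnegative. By convexity $\bar D$ is therefore a minimizer, and by the uniqueness asserted in Proposition \ref{Gauss} it equals $\bar\Sigma$. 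Conjugating back by $P$ yields the stated expression $\bar\Sigma = P^T \bar D P$.

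The main obstacle I anticipate is the possible degeneracy $\alpha_i=0$, where the gradient expression above breaks down. I would handle this either by a perturbation argument (replace $D_\mu$ by $D_\mu+\varepsilon I$, apply the above, and let $\varepsilon\downarrow 0$ using continuity of the backward projection in $\mu$ for $W_2$), or by decomposing $\R^d$ into the kernel and image of $\Sigma_\mu$ and observing that on the kernel direction both sides force the $i$-th variance to be $0=\min(\alpha_i,\beta_i)$, so the argument only needs to be run on the complementary invertible block.
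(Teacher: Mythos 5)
Your proof is correct in essence and arrives at the same variational problem as the paper (via Proposition \ref{Gauss}, after the same reduction to diagonal covariances), but the way you then solve that problem is genuinely different. The paper avoids convexity and KKT conditions entirely: it first shows that the minimizer must itself be diagonal by applying the elementary trace inequality of Lemma \ref{marseille17-06} (that $\mathrm{Tr}(\sqrt{\Gamma})\leq \mathrm{Tr}(\sqrt{D_\Gamma})$, proved by noting $\Gamma_{ii}\geq (\Gamma^{1/2})_{ii}^2$) to replace any feasible $\Sigma$ with its diagonal part without increasing the objective, and then minimizes the resulting $d$ decoupled scalar problems $\sum_i(\sigma_i(\mu)-\lambda_i^{1/2})^2$ explicitly. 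You instead establish convexity of $F$ from operator concavity of the square root and then verify the first-order condition at the candidate $\bar D$. Your route is more systematic and would generalize more readily (e.g., to other concave trace functionals), while the paper's is more elementary and sidesteps differentiability.

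There is a real gap around degeneracy that deserves more than a sketch, and it is slightly broader than you indicate. Your gradient formula requires $D_\mu^{1/2}\bar D D_\mu^{1/2}$ to be invertible, i.e.\ $\min(\alpha_i,\beta_i)>0$ for every $i$. This fails not only when some $\alpha_i=0$ (which your $D_\mu+\varepsilon I$ perturbation would cure) but also when some $\beta_i=0$, in which case the feasible set collapses in coordinate $i$ and perturbing $D_\mu$ does nothing; one-sided directional derivatives at $\bar D$ can be $-\infty$ in directions that are infeasible, so the first-order inequality must be checked only over the feasible cone. Your second suggestion (split $\R^d$ into the degenerate and nondegenerate blocks) is the right fix, but it should be applied with respect to $\ker\Sigma_\mu\cup\ker\Sigma_\nu$, not just $\ker\Sigma_\mu$. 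In contrast, the paper's argument has no such issue: both Lemma \ref{marseille17-06} and the scalar identity $\lambda_i+\sigma_i^2(\mu)-2\lambda_i^{1/2}\sigma_i(\mu)=(\sigma_i(\mu)-\lambda_i^{1/2})^2$ are valid at zero with no limiting argument, which is one concrete advantage of the more elementary route.
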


The proof of this result will make use of the following Lemma. 
\begin{lemma}\label{marseille17-06}
    For any symmetric positive semidefinite  matrix $\Gamma$,
   \[\mathrm{Tr}\big(\sqrt \Gamma\big)\leq \mathrm{Tr}\big(\sqrt {D_\Gamma} \big),\] 
   where $D_\Gamma$ denotes the diagonal matrix whose diagonal is the one of $\Gamma$.    
\end{lemma}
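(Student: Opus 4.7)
The plan is to use the spectral decomposition of $\Gamma$ together with Jensen's inequality applied rowwise via the orthogonality of the diagonalizing change of basis.

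First, since $\Gamma$ is symmetric positive semidefinite, write the spectral decomposition $\Gamma = U \Lambda U^T$, where $U$ is an orthogonal matrix with entries $U_{ik}$ and $\Lambda = \mathrm{diag}(\lambda_1,\dots,\lambda_d)$ with $\lambda_k \geq 0$. Then $\sqrt{\Gamma} = U\sqrt{\Lambda}\,U^T$, so the diagonal entries of $\Gamma$ and of $\sqrt{\Gamma}$ are
\[
\Gamma_{ii} = \sum_{k=1}^d U_{ik}^2\, \lambda_k,\qquad (\sqrt{\Gamma})_{ii} = \sum_{k=1}^d U_{ik}^2\, \sqrt{\lambda_k}.
\]
Using the trace identity $\mathrm{Tr}(\sqrt{\Gamma}) = \sum_i (\sqrt{\Gamma})_{ii}$, the inequality to prove reduces to
\[
\sum_{i=1}^d \sum_{k=1}^d U_{ik}^2\, \sqrt{\lambda_k} \ \leq\ \sum_{i=1}^d \sqrt{\sum_{k=1}^d U_{ik}^2\, \lambda_k}.
\]

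Next, for each fixed row index $i$, the orthogonality of $U$ gives $\sum_{k=1}^d U_{ik}^2 = 1$, so $(U_{ik}^2)_{k=1,\dots,d}$ is a probability distribution on $\{1,\dots,d\}$. Since $x \mapsto \sqrt{x}$ is concave on $[0,+\infty)$, Jensen's inequality yields
\[
\sum_{k=1}^d U_{ik}^2\, \sqrt{\lambda_k} \ \leq\ \sqrt{\sum_{k=1}^d U_{ik}^2\, \lambda_k} \ =\ \sqrt{\Gamma_{ii}}.
\]
Summing this inequality over $i\in\{1,\dots,d\}$ gives exactly the desired bound $\mathrm{Tr}(\sqrt{\Gamma}) \leq \mathrm{Tr}(\sqrt{D_\Gamma})$.

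There is no real obstacle; the only substantive ingredient is recognizing that the rows of an orthogonal matrix, squared entrywise, are probability vectors, which lets the concavity of the square root do all the work. An alternative, essentially equivalent, route would be to invoke the (known) concavity of $A \mapsto \mathrm{Tr}(\sqrt{A})$ on the PSD cone and use the averaging identity $D_\Gamma = 2^{-d}\sum_{\varepsilon \in \{-1,+1\}^d} D_\varepsilon \Gamma D_\varepsilon$ (where $D_\varepsilon$ is the diagonal sign matrix), noting that each conjugate $D_\varepsilon \Gamma D_\varepsilon$ has the same spectrum as $\Gamma$; but the Jensen argument above is more elementary and self-contained.
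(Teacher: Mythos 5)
Your proof is correct, but it takes a different route than the paper's. Both arguments ultimately reduce to the entrywise inequality $(\sqrt{\Gamma})_{ii} \leq \sqrt{\Gamma_{ii}}$ and then sum over $i$; they differ in how this is obtained. You diagonalize $\Gamma = U\Lambda U^T$, observe that the squared rows of the orthogonal matrix $U$ are probability vectors, and invoke Jensen's inequality for the concave function $\sqrt{\cdot}$. The paper avoids the spectral decomposition and Jensen entirely: writing $S = \Gamma^{1/2}$ (symmetric PSD), it notes that $\Gamma_{ii} = (S^2)_{ii} = \sum_j S_{ij}^2 \geq S_{ii}^2$, hence $S_{ii} \leq \sqrt{\Gamma_{ii}}$ directly by dropping the off-diagonal squares. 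The paper's argument is thus slightly more elementary (no convexity input, no eigendecomposition), while yours makes the probabilistic averaging structure explicit and connects naturally to the alternative you sketch, namely the concavity of $A \mapsto \mathrm{Tr}(\sqrt{A})$ combined with the pinching identity $D_\Gamma = 2^{-d}\sum_\varepsilon D_\varepsilon \Gamma D_\varepsilon$, which is the operator-level version of the same Jensen step.
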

\begin{proof}[Proof of Lemma \ref{marseille17-06}]
Let $\Gamma$ be a symmetric positive semidefinite  matrix and let $S=\Gamma^{1/2}$ be its square root. Then one has 
\[
\Gamma_{ii}=\sum_{j=1}^d S_{ij}^2=S_{ii}^2 +\sum_{j\neq i} S_{ij}^2 \geq S_{ii}^2.
\]
It follows that
\[
\mathrm{Tr}\big(\sqrt \Gamma\big)=\sum_{i=1}^d S_{ii}
\leq \sum_{i=1}^d\sqrt{\Gamma_{ii}}=\mathrm{Tr}\big(\sqrt {D_\Gamma}\big).
\]
\end{proof}

\proof[Proof of Proposition \ref{prop-Gaussian-Commute}.]
Without loss of generality, we can assume that $\Sigma_\mu=D_\mu$ and $\Sigma_\nu=D_\nu$ are diagonal, with diagonal elements denoted $\sigma_i^2(\mu)$ and $\sigma_i^2(\nu)$, $1\leq i\leq d$. According to Proposition \ref{Gauss}, $\bar \mu = \mc N(m_\nu,\bar \Sigma)$, with $\bar \Sigma$ a minimizer of 
\[
\mathcal{F}(\Sigma) = \mathrm{Tr}\left(\Sigma+D_\mu-2\left(D_\mu^{1/2} \Sigma D_\mu^{1/2}\right)^{1/2}\right),
\]
over the $\mc S_\nu:=\{\Sigma : 0\leq \Sigma \leq D_\nu\}$.
To prove that the minimizer is attained at a diagonal matrix, it is enough to show that for every $\Sigma \in \mc S_\nu$, the diagonal matrix $D_\Sigma$ obtained by canceling all the non-diagonal terms of $\Sigma$ is such that $\mc F(\Sigma) \geq \mc F(D_\Sigma)$. Note that $D_\Sigma \in \mc S_\nu$, since $0\leq\Sigma\leq D_\nu$ implies that for all $1\leq i\leq d$, $0\leq\Sigma_{ii}\leq \sigma_i^2(\nu)$ and so $0\leq D_{\Sigma}\leq D_\nu$.
Applying Lemma \ref{marseille17-06} to $\Gamma:=D_\mu^{1/2} \Sigma D_\mu^{1/2}$ for which $D_\Gamma = D_\mu^{1/2} D_\Sigma D_\mu^{1/2}$, yields
\[
\mathrm{Tr}\left(\left(D_\mu^{1/2} \Sigma D_\mu^{1/2}\right)^{1/2}\right) \leq \mathrm{Tr}\left(\left(D_\mu^{1/2} D_\Sigma D_\mu^{1/2}\right)^{1/2}\right)
\]
and so
\[
\mc F(\Sigma) =\mathrm{Tr}(\Sigma)+\mathrm{Tr}(D_\mu)-2\mathrm{Tr}\left(\left(D_\mu^{1/2} \Sigma D_\mu^{1/2}\right)^{1/2}\right) \geq \mathrm{Tr}(D_\Sigma)+\mathrm{Tr}(D_\mu)-2\mathrm{Tr}\left(\left(D_\mu^{1/2} D_\Sigma D_\mu^{1/2}\right)^{1/2}\right)=\mc F(D_\Sigma),
\]
which proves the claim. We conclude that $\bar{\Sigma}$ is a diagonal matrix. Now, if $D$ is a diagonal matrix with diagonal elements denoted $\lambda_1,\ldots,\lambda_d$, we have $D\in S_\nu$ if and only if $0\leq \lambda_i \leq \sigma_i^2(\nu)$ for all $i$, and in this case,
\[
\mc F(D) = \sum_{i=1}^d\left(\lambda_i+\sigma_i^2(\mu)-2\lambda_i^{1/2}\sigma_i(\mu)\right) = \sum_{i=1}^d\left(\sigma_i(\mu)-\lambda_i^{1/2}\right)^2 \geq \sum_{i=1}^d [\sigma_i(\mu)-\sigma_i(\nu)]_+^2,
\]
with equality if, and only if, $\lambda_i=\min(\sigma^2_i(\mu), \sigma_i^2(\nu))$, that is if $D=\min(D_\mu,D_\nu)=\bar{\Sigma}$.
\endproof
An immediate consequence of the proof above is the following closed formula for the quadratic barycentric optimal transport cost between two Gaussian laws with diagonal covariance matrices.
\begin{corollary}If $\mu=\mc N(m_\mu,\Sigma_\mu)$ and $\nu=\mc N(m_\nu,\Sigma_\nu)$ with $\Sigma_\mu$ and $\Sigma_\nu$ two diagonal matrices with diagonal elements denoted $\sigma_i^2(\mu)$ and $\sigma_i^2(\nu)$, $1\leq i\leq d$, then  
\begin{equation}\label{gaussdiag}
\overline{{\mathcal{T}}}_2(\nu|\mu)=|m_\nu-m_\mu|^2+\sum_{i=1}^d[\sigma_i(\mu)-\sigma_i(\nu)]_+^2.
\end{equation}
\end{corollary}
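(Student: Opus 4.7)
The plan is to extract the closed form directly from the computation already contained inside the proof of Proposition~\ref{prop-Gaussian-Commute}. Since $\Sigma_\mu$ and $\Sigma_\nu$ are both diagonal they commute, so Proposition~\ref{prop-Gaussian-Commute} applies with $P=I$, yielding that the backward projection $\bar\mu$ has covariance $\bar\Sigma=\min(\Sigma_\mu,\Sigma_\nu)$, i.e.\ the diagonal matrix with entries $\min(\sigma_i^2(\mu),\sigma_i^2(\nu))$. Combined with formula \eqref{eq:Tbar-Gaussian} of Proposition~\ref{Gauss}, this gives
\[
\overline{\mathcal{T}}_2(\nu|\mu) = |m_\nu - m_\mu|^2 + \mathrm{Tr}\Big(\bar\Sigma + \Sigma_\mu - 2\big(\Sigma_\mu^{1/2}\bar\Sigma\,\Sigma_\mu^{1/2}\big)^{1/2}\Big).
\]

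The remaining task is a direct diagonal computation. Since $\Sigma_\mu$ and $\bar\Sigma$ are both diagonal, so is $\big(\Sigma_\mu^{1/2}\bar\Sigma\,\Sigma_\mu^{1/2}\big)^{1/2}$, with $i$-th diagonal entry $\sigma_i(\mu)\min(\sigma_i(\mu),\sigma_i(\nu))$. Summing coordinate by coordinate then gives
\[
\mathrm{Tr}\Big(\bar\Sigma + \Sigma_\mu - 2\big(\Sigma_\mu^{1/2}\bar\Sigma\,\Sigma_\mu^{1/2}\big)^{1/2}\Big) = \sum_{i=1}^d \big(\sigma_i(\mu) - \min(\sigma_i(\mu),\sigma_i(\nu))\big)^2 = \sum_{i=1}^d [\sigma_i(\mu) - \sigma_i(\nu)]_+^2,
\]
which is exactly \eqref{gaussdiag}.

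I do not expect any substantive obstacle here: the identity \eqref{gaussdiag} is essentially already written out in the final display of the proof of Proposition~\ref{prop-Gaussian-Commute}, where the lower bound $\sum_i (\sigma_i(\mu)-\lambda_i^{1/2})^2 \geq \sum_i [\sigma_i(\mu)-\sigma_i(\nu)]_+^2$ was shown to be saturated precisely at $\lambda_i=\min(\sigma_i^2(\mu),\sigma_i^2(\nu))$. Stating it as a corollary amounts to repackaging that computation in the diagonal case.
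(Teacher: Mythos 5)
Your proposal is correct and follows essentially the same route as the paper: the paper states that \eqref{gaussdiag} is an immediate consequence of the proof of Proposition~\ref{prop-Gaussian-Commute}, whose final display already exhibits the minimum value $\sum_i [\sigma_i(\mu)-\sigma_i(\nu)]_+^2$ attained at $\lambda_i=\min(\sigma_i^2(\mu),\sigma_i^2(\nu))$, and combining this with \eqref{eq:Tbar-Gaussian} gives the corollary. Your version merely repackages the same computation by first naming $\bar\Sigma=\min(\Sigma_\mu,\Sigma_\nu)$ and then substituting it into the trace formula, which is sound.
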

    
Let us do a few observations to complete the picture and connect with the results of the first parts. Suppose that $\mu = \mc N(m_\mu, \Sigma_\mu)$ and  $\nu = \mc N(m_\nu, \Sigma_\nu)$ and let $\bar \Sigma$ be the unique solution of \eqref{eq:Tbar-Gaussian}. We assume, for simplicity, that $\Sigma_\mu$ and $\bar{\Sigma}$ are invertible. 
\begin{itemize}
    \item The optimal transport map $\nabla \varphi$ sending $\mu$ onto $\bar \mu = \mc N(m_\nu,\bar\Sigma)$ is the linear map
    \[
    \nabla \varphi(x) = Ax,\qquad \text{with}\qquad A=\bar{\Sigma}^{1/2}(\bar{\Sigma}^{1/2}\Sigma_\mu \bar{\Sigma}^{1/2})^{-1/2} \bar{\Sigma}^{1/2}.
    \]
    It corresponds to the quadratic form $\varphi(x) = \frac12 \langle x, Ax\rangle$, $x\in \R^d.$
    \item Since $A$ is invertible, the conjugate of $\varphi$ is given by $\varphi^*(y) = \frac12 \langle y, A^{-1}y\rangle$, $y\in \R^d$. The associated dual optimizer is $\bar{f}(y)= \frac12 \langle y, A^{-1}y\rangle + \frac12 \|y\|^2$, $y\in \R^d$.
    \item Since $\varphi^*$ is smooth, one can apply Proposition \ref{prop:Cbarf-easy}, and conclude that, for all $z\in \R^d$,
    \[
    C_{\bar{f}}(z) = \{z\}+ \mathrm{Ker}(A^{-1}-I_d) = \{z\}+ \mathrm{Ker}(A-I_d).
    \]
    Since this is an affine subspace of $\R^d$, one gets $D_{\bar{f}}(z)=C_{\bar{f}}(z)$, for all $z\in \R^d$.
    In other words, the martingales transporting $\bar \mu$ on $\nu$ move only along the directions that are left fixed by the transport map $A$ from $\mu$ to $\bar{\mu}$.
    \item If $B=(B_t)_{t\in [0,1]}$ is a standard Brownian motion, and $X_0$ is a random vector with law $\mu$ independent of $B$, then
    \[
    M_t = AX_0 + (\Sigma_\nu-\bar \Sigma)^{1/2} B_t
    \]
    is a martingale such that $M_0 \sim \bar{\mu}$ and $M_1 \sim \nu$.
    The process $X$ defined by
    \[
    X_t=(1-t)X_0 +tAX_0+(\Sigma_\nu-\bar \Sigma)^{1/2} B_t,\qquad t\in [0,1]
    \]
    is a geodesic from $\mu$ to $\nu$ in the sense of Theorem \ref{thm:BBbar}.
    \item Combining the last two points, we conclude that the matrix $\bar{\Sigma}$ is such that 
    \[
    \mathrm{Im}(\Sigma_\nu-\bar{\Sigma}) \subset \mathrm{Ker}(A-I_d).
    \]
    or equivalently $(A-I_d)\times (\Sigma_\nu-\bar{\Sigma}) = (\Sigma_\nu-\bar{\Sigma})(A-I_d)=0$. This condition can be alternatively recovered as a first order optimality condition for the optimization problem \eqref{eq:Tbar-Gaussian}. We omit details.
    Let us check this condition in the particular case where $\Sigma_\mu$ and $\Sigma_\nu$ are diagonal and invertible.  Then, as shown above, $\bar{\Sigma} = \min(\Sigma_\mu,\Sigma_\nu)$. So $A=\bar{\Sigma}^{1/2} \Sigma_\mu^{-1/2} = \min(I_d, \Sigma_\nu^{1/2}\Sigma_\mu^{-1/2})$. On the other hand, $\Sigma_\nu-\bar{\Sigma}= \max(\Sigma_\nu-\Sigma_\mu,0)$. So we see that $\mathrm{Im}(\Sigma_\nu-\bar{\Sigma})= \mathrm{Ker}(A-I_d)$.
\end{itemize}

To identify the backward projection, one can use the following result.
\begin{theorem}{(\cite[Theorem 2.1]{gozlan2020mixture})}\label{thm:GJ20}
    The following are equivalent:
    \begin{itemize}
        \item $\nu = \bar{\mu}$
        \item The Brenier transport map $T$ sending $\mu$ onto $\nu$ is $1$-Lipschitz.
    \end{itemize}
\end{theorem}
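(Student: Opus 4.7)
The plan is to use the Kantorovich duality \eqref{eq:duality} together with Brenier's theorem. The starting point is the inequality $\overline{\mathcal{T}}_2(\nu|\mu) \leq W_2^2(\mu,\nu)$, which follows from Jensen's inequality applied to any $\pi \in \Pi(\mu,\nu)$:
\[
\left\|x - \int y\,\ud\pi_x(y)\right\|^2 \leq \int \|x-y\|^2\,\ud\pi_x(y).
\]
Combining this with the identity $\overline{\mathcal{T}}_2(\nu|\mu) = W_2^2(\mu,\bar{\mu})$ from \eqref{eq:GJ20}, and the fact that $\bar{\mu}$ is the \emph{unique} minimizer of $\eta\mapsto W_2^2(\mu,\eta)$ among $\eta\leq_c \nu$ (and $\nu$ is an admissible competitor since $\nu \leq_c \nu$), the problem reduces to proving the equivalence
\[
\nu = \bar{\mu} \quad\Longleftrightarrow\quad \overline{\mathcal{T}}_2(\nu|\mu) = W_2^2(\mu,\nu) \quad\Longleftrightarrow\quad T \text{ is } 1\text{-Lipschitz}.
\]

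For the implication $\nu = \bar{\mu} \Rightarrow T$ is $1$-Lipschitz, I would invoke the description from Section \ref{Sec:Dualpot}: the function $\varphi$ defined at \eqref{eq:varphi} is convex with $\nabla \varphi$ being 1-Lipschitz, and $\nabla \varphi_\# \mu = \bar{\mu} = \nu$. Since $\nabla\varphi$ is the gradient of a convex function pushing $\mu$ onto $\nu$, under the standing assumption that $\mu$ is absolutely continuous, the uniqueness part of Brenier's theorem yields $T=\nabla\varphi$ $\mu$-a.e., and hence $T$ is 1-Lipschitz.

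The interesting direction is $T$ 1-Lipschitz $\Rightarrow \overline{\mathcal{T}}_2(\nu|\mu) = W_2^2(\mu,\nu)$. Here I write $T = \nabla F$ for some lower semicontinuous convex function $F$, and exhibit a dual competitor saturating the inequality. The candidate is
\[
f := F^* - \tfrac{1}{2}\|\,\cdot\,\|^2.
\]
Convexity of $f$ is equivalent to $F^*$ being $1$-strongly convex, which by the Baillon--Haddad type duality is equivalent to $\nabla F = T$ being 1-Lipschitz. Then a direct computation gives
\[
Q_2 f(x) = \inf_y\left\{F^*(y) - x\cdot y + \tfrac{1}{2}\|x\|^2\right\} = \tfrac{1}{2}\|x\|^2 - F^{**}(x) = \tfrac{1}{2}\|x\|^2 - F(x),
\]
using $F^{**}=F$. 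Plugging into \eqref{eq:duality} and invoking the Brenier identity $F(x)+F^*(T(x)) = x\cdot T(x)$ for $\mu$-a.e.\ $x$:
\[
\int Q_2 f\,\ud\mu - \int f\,\ud\nu = \tfrac{1}{2}\!\int\|x\|^2\ud\mu + \tfrac{1}{2}\!\int\|y\|^2\ud\nu - \int F\,\ud\mu - \int F^*\,\ud\nu = \tfrac{1}{2}\!\int\|x-T(x)\|^2\ud\mu = \tfrac{1}{2}W_2^2(\mu,\nu).
\]
This gives $\overline{\mathcal{T}}_2(\nu|\mu)\geq W_2^2(\mu,\nu)$ and the reverse inequality from Jensen finishes the argument.

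The main obstacle is really the single step of checking that $f=F^*-\tfrac{1}{2}\|\,\cdot\,\|^2$ is convex, i.e.\ that $F^*$ is $1$-strongly convex precisely when $T=\nabla F$ is 1-Lipschitz. This is the Baillon--Haddad theorem (or, equivalently, a direct consequence of the fact that $\nabla F^*=(\nabla F)^{-1}$ has gradient bounded below by the identity as soon as $\nabla^2 F \leq I$). Everything else is either a bookkeeping computation or a routine invocation of Brenier's theorem and the structural results already recalled in Section \ref{Sec:Dualpot}.
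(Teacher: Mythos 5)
Your proof is correct and follows what is, modulo presentation, the route taken in \cite{gozlan2020mixture}: reduce both conditions to the equality $\overline{\mathcal{T}}_2(\nu|\mu)=W_2^2(\mu,\nu)$, handle one implication via the $1$-Lipschitz property of $\nabla\varphi=\nabla(\bar f+\tfrac12\|\cdot\|^2)^*$ together with Brenier uniqueness, and obtain the other by plugging the dual competitor $f=F^*-\tfrac12\|\cdot\|^2$ (convex by the Baillon--Haddad equivalence) into the Kantorovich duality for $\overline{\mathcal{T}}_2$. Both the identification $Q_2f=\tfrac12\|\cdot\|^2-F$ and the closing computation via the Fenchel--Young equality along the Brenier map are exactly the mechanism behind \eqref{eq:varphi}, so this is the same argument rather than a genuinely different one; the only hypothesis you rightly flag as implicit is the absolute continuity of $\mu$, which is needed both to invoke Brenier uniqueness and for the phrase ``the Brenier transport map'' to make sense.
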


\begin{proposition}\label{prop:barmu=nu}
    Suppose that $\mu=\mathcal{N}(0,\Sigma_\mu)$ and $\nu=\mathcal{N}(0,\Sigma_\nu)$. If $\Sigma_\mu$ is invertible and $\Sigma_\nu \leq \Sigma_\mu$, then $\bar{\mu}=\nu$.
\end{proposition}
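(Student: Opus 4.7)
The plan is to apply Theorem \ref{thm:GJ20}: it suffices to show that the Brenier transport map $T$ sending $\mu$ onto $\nu$ is $1$-Lipschitz. Since both $\mu$ and $\nu$ are centered Gaussians with $\Sigma_\mu$ invertible, the Brenier map is the linear map $T(x)=Ax$ with
\[
A=\Sigma_\mu^{-1/2}\bigl(\Sigma_\mu^{1/2}\Sigma_\nu\Sigma_\mu^{1/2}\bigr)^{1/2}\Sigma_\mu^{-1/2},
\]
which is a well-known computation (see, e.g., the explicit formulas used in the proof of Proposition \ref{Gauss}). Since $A$ is symmetric positive semidefinite, $T$ is $1$-Lipschitz if and only if $A\leq I_d$ in the sense of symmetric operators.

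Next I would reduce the inequality $A\leq I_d$ to the assumption $\Sigma_\nu\leq \Sigma_\mu$. Conjugating by $\Sigma_\mu^{1/2}$, the inequality $A\leq I_d$ is equivalent to
\[
\bigl(\Sigma_\mu^{1/2}\Sigma_\nu\Sigma_\mu^{1/2}\bigr)^{1/2}\leq \Sigma_\mu,
\]
and, squaring, this will follow from $\Sigma_\mu^{1/2}\Sigma_\nu\Sigma_\mu^{1/2}\leq \Sigma_\mu^{2}$ provided one can take square roots on both sides. But this last inequality is an immediate consequence of $\Sigma_\nu\leq \Sigma_\mu$: conjugating the latter by $\Sigma_\mu^{1/2}$ yields $\Sigma_\mu^{1/2}\Sigma_\nu\Sigma_\mu^{1/2}\leq \Sigma_\mu^{1/2}\Sigma_\mu\Sigma_\mu^{1/2}=\Sigma_\mu^{2}$.

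The only subtle point (and arguably the main obstacle) is the passage from $\Sigma_\mu^{1/2}\Sigma_\nu\Sigma_\mu^{1/2}\leq \Sigma_\mu^{2}$ to $(\Sigma_\mu^{1/2}\Sigma_\nu\Sigma_\mu^{1/2})^{1/2}\leq \Sigma_\mu$, which relies on the \emph{operator monotonicity of the square root}: for symmetric positive semidefinite matrices $X,Y$, one has $X\leq Y\Rightarrow X^{1/2}\leq Y^{1/2}$. This is a classical fact (L\"owner--Heinz theorem with exponent $1/2$) that I would simply cite. Combining these steps gives $A\leq I_d$, hence $T$ is $1$-Lipschitz, and Theorem \ref{thm:GJ20} concludes that $\bar{\mu}=\nu$.

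As a sanity check, this is consistent with Proposition \ref{prop-Gaussian-Commute}: when $\Sigma_\mu$ and $\Sigma_\nu$ commute and $\Sigma_\nu\leq \Sigma_\mu$, the minimum $\min(\Sigma_\mu,\Sigma_\nu)$ of Proposition \ref{prop-Gaussian-Commute} is exactly $\Sigma_\nu$, so $\bar{\mu}=\mathcal{N}(0,\Sigma_\nu)=\nu$, in agreement with the statement.
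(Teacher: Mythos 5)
Your proof is correct, but it takes a genuinely different route from the paper's. Both strategies reduce to showing that the Brenier map $T(x)=Ax$ is $1$-Lipschitz and then invoke Theorem \ref{thm:GJ20}, but they diverge on how to establish $A\leq I_d$. You write down the closed formula $A=\Sigma_\mu^{-1/2}(\Sigma_\mu^{1/2}\Sigma_\nu\Sigma_\mu^{1/2})^{1/2}\Sigma_\mu^{-1/2}$, conjugate to reduce to $(\Sigma_\mu^{1/2}\Sigma_\nu\Sigma_\mu^{1/2})^{1/2}\leq\Sigma_\mu$, and then appeal to operator monotonicity of the square root (L\"owner--Heinz with exponent $1/2$) applied to $\Sigma_\mu^{1/2}\Sigma_\nu\Sigma_\mu^{1/2}\leq\Sigma_\mu^2$. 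The paper's proof (Lemma \ref{lem:convexorder->contraction}) never writes down the explicit formula for $A$: it only uses the defining relation $A\Sigma_\mu A=\Sigma_\nu$ together with $\Sigma_\nu\leq\Sigma_\mu$, tests the resulting inequality $A\Sigma_\mu A\leq\Sigma_\mu$ against an eigenvector $x$ of $A$ with eigenvalue $\lambda$, and reads off $\lambda^2 x^T\Sigma_\mu x\leq x^T\Sigma_\mu x$, whence $\lambda\leq 1$. The eigenvector argument is more elementary, requiring neither the explicit Brenier formula nor the L\"owner--Heinz theorem; your argument is slightly more structural and, if one already knows L\"owner--Heinz, arguably shorter. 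Both are correct and complete.
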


The proof follows immediately from Theorem \ref{thm:GJ20} and  the following lemma.
\begin{lemma}\label{lem:convexorder->contraction}
Suppose that $\mu=\mathcal{N}(0,\Sigma_\mu)$ and $\nu=\mathcal{N}(0,\Sigma_\nu)$. If $\Sigma_\mu$ is invertible and $\Sigma_\nu \leq \Sigma_\mu$, then the Brenier transport map $T$ sending $\mu$ onto $\nu$ is $1$-Lipschitz.
\end{lemma}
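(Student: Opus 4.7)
The plan is to use the fact that between two centered, non-degenerate Gaussian measures, the Brenier map is explicit and linear. Since $\mu = \mathcal{N}(0,\Sigma_\mu)$ with $\Sigma_\mu$ invertible and $\nu = \mathcal{N}(0,\Sigma_\nu)$, the optimal transport map is $T(x) = Ax$, where
\[
A = \Sigma_\mu^{-1/2}\bigl(\Sigma_\mu^{1/2}\Sigma_\nu\Sigma_\mu^{1/2}\bigr)^{1/2}\Sigma_\mu^{-1/2}
\]
is the unique symmetric positive semi-definite matrix satisfying the identity $A\Sigma_\mu A = \Sigma_\nu$. Since $A$ is symmetric, $T$ being $1$-Lipschitz is equivalent to the spectral condition $A \leq I_d$, i.e. every eigenvalue of $A$ lies in $[0,1]$.

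The heart of the argument is then to deduce $A \leq I_d$ from the assumption $\Sigma_\nu \leq \Sigma_\mu$, using the quadratic identity $A\Sigma_\mu A = \Sigma_\nu$. I would introduce the auxiliary matrix
\[
B := \Sigma_\mu^{1/2}\,A\,\Sigma_\mu^{-1/2},
\]
which is similar to $A$ and hence has the same spectrum. A direct computation, using the symmetry of $A$ and the identity $A\Sigma_\mu A = \Sigma_\nu$, yields
\[
B^T B = \Sigma_\mu^{-1/2}\,A\,\Sigma_\mu\,A\,\Sigma_\mu^{-1/2} = \Sigma_\mu^{-1/2}\,\Sigma_\nu\,\Sigma_\mu^{-1/2}.
\]
The hypothesis $\Sigma_\nu \leq \Sigma_\mu$, combined with conjugation by the invertible matrix $\Sigma_\mu^{-1/2}$, gives exactly $\Sigma_\mu^{-1/2}\,\Sigma_\nu\,\Sigma_\mu^{-1/2} \leq I_d$. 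Therefore $B^T B \leq I_d$, so $\|B\|_{\mathrm{op}} \leq 1$, and every eigenvalue of $B$ (and hence of $A$) has modulus at most one. Since $A$ is positive semi-definite its eigenvalues are also nonnegative, so they lie in $[0,1]$, yielding $A \leq I_d$ as desired.

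I do not anticipate a serious obstacle here; the only mild subtlety is that the classical formula for $A$ requires $\Sigma_\mu$ to be invertible (which is part of the hypothesis) to define $\Sigma_\mu^{-1/2}$, while $\Sigma_\nu$ may be singular without any issue (the expression $(\Sigma_\mu^{1/2}\Sigma_\nu\Sigma_\mu^{1/2})^{1/2}$ remains well-defined as the positive square root of a symmetric positive semi-definite matrix). The key algebraic step is the identity $B^T B = \Sigma_\mu^{-1/2}\Sigma_\nu\Sigma_\mu^{-1/2}$, which converts the quadratic relation $A\Sigma_\mu A = \Sigma_\nu$ into a linear inequality to which the hypothesis $\Sigma_\nu \leq \Sigma_\mu$ directly applies.
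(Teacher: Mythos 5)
Your proof is correct and uses the same key facts as the paper: that the Brenier map between centered Gaussians is linear, $T(x)=Ax$ with $A$ symmetric positive semidefinite satisfying $A\Sigma_\mu A = \Sigma_\nu$, and that $\Sigma_\nu\leq\Sigma_\mu$ then forces the eigenvalues of $A$ into $[0,1]$. The paper reaches this conclusion a bit more directly by plugging an eigenvector of $A$ into the quadratic form inequality $x^{T}A\Sigma_\mu Ax\leq x^{T}\Sigma_\mu x$, whereas you conjugate by $\Sigma_\mu^{1/2}$ and invoke the spectral radius bound $\rho(B)\leq\|B\|_{\mathrm{op}}$; both are valid, and the content is the same.
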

\proof
The map $T$ is linear: $T(x)=Ax$, $x\in \R^d$, with $A$ symmetric positive semidefinite matrix such that $A\Sigma_\mu A=\Sigma_\nu$.
By assumption, one gets
\[
A\Sigma_\mu A\leq \Sigma_\mu.
\]
If $x$ is an eigenvector of $A$ with associated eigenvalue $\lambda$, then
\[
x^T\Sigma_\mu x\geq x^T A\Sigma_\mu Ax=\lambda^2x^T\Sigma_\mu x,
\]
which shows that $\lambda^2\le 1$ since $\Sigma_\mu$ is positive definite, and thus $0\le \lambda\le 1$ since $0\leq A$. 
\endproof

\begin{remark}
According to Lemma \ref{lem1}, the condition $\Sigma_\nu \leq \Sigma_\mu$ is equivalent to $\nu \leq_c \mu$. One could thus ask, if the conclusion of Proposition \ref{prop:barmu=nu}, could be extended to general probability measures, under the condition $\nu \leq_c \mu$. This is not the case. Namely, take $\mu=\frac{1}{4}\delta_{-2}+\frac{1}{4}\delta_{2}+\frac{1}{2}\delta_{0}$ and $\nu=\frac{1}{2}\delta_{-1}+\frac{1}{2}\delta_{1}$. One has $\nu\le_c\mu$ but there is no transport map sending $\mu$ onto $\nu$.
\end{remark}

Let us finally discuss the conclusion of Lemma \ref{lem:convexorder->contraction}. 

First, observe that the converse implication is false in general: there exist $\Sigma_\mu$ and $\Sigma_\nu$ such that $T$ is $1$-Lipschitz but $\Sigma_\nu\nleq\Sigma_\mu$. Namely, take $T(x)=Ax$, $x\in \R^d$, with
\[
A=\begin{bmatrix}
1 - \delta & 0 \\
0 & 1
\end{bmatrix}\qquad \text{and}\qquad \Sigma_\mu=\begin{bmatrix}
1 + \epsilon & \epsilon - 1 \\
\epsilon - 1 & 1 + \epsilon
\end{bmatrix},
\]
for some $\delta,\epsilon \in (0,1)$.
A simple calculation shows that
\[
\Sigma_\nu=\begin{bmatrix}
(1 - \delta)^2 (\epsilon + 1) & (1 - \delta)(\epsilon - 1) \\
(1 - \delta)(\epsilon - 1) & \epsilon + 1
\end{bmatrix}
\]
and 
\[
\Sigma_\nu-\Sigma_\mu=\begin{bmatrix}
(1 - \delta)^2 (\epsilon + 1) - \epsilon - 1 & - \delta(\epsilon - 1) - \epsilon + 1 \\
-\delta(\epsilon - 1) - \epsilon + 1 & 0
\end{bmatrix}
\]
whose determinant is negative.
\begin{remark}
This example also shows that the projection of $\mu$ onto the measures $\le_c \nu$ is not the same as the projection of $\nu$ onto the measures $\le_c \mu$, i.e. the problem is not symmetric in $(\mu,\nu)$. Indeed, for this example, since $T$ is $1$-Lipschitz, according to Theorem \ref{thm:GJ20}, the projection of $\mu$ onto $\{ \eta :  \eta \le_c \nu\}$ is $\nu$ itself.  However, $\nu$ cannot be the projection of $\nu$ onto $\{ \eta : \eta \le_c \mu\}$, since $\Sigma_\nu\npreceq\Sigma_\mu$ and so $\nu \nleq_c \mu$. 
\end{remark}

Nevertheless, one has the following partial converse to Lemma \ref{lem:convexorder->contraction}, under very specific hypotheses.
\begin{proposition}\ 
\begin{enumerate}
\item Let $\mu=\mathcal{N}(0,\Sigma_\mu)$ and $\nu=\mathcal{N}(0,\Sigma_\nu)$ with $\Sigma_\mu$  invertible and $\Sigma_\nu\Sigma_\mu=\Sigma_\mu\Sigma_\nu$. Then the Brenier transport map $T$ sending $\mu$ onto $\nu$ is $1$-Lipschitz if and only if $\Sigma_\nu \leq \Sigma_\mu$.
\item Let $\mu=\mathcal{N}(0,I_d)$ be the standard Gaussian measure and $\nu \in \mc P_2(\R^d)$ a centered probability measure. If the Brenier map $T$ sending $\mu$ onto $\nu$ is $1$-Lipschitz, then $\nu \leq_c \mu$.
\end{enumerate}
\end{proposition}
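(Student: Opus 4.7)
The plan for part (i) is to use the converse of Lemma \ref{lem:convexorder->contraction} combined with a simultaneous diagonalization argument. Since $\Sigma_\mu$ and $\Sigma_\nu$ commute, there is an orthogonal matrix $P$ such that $D_\mu := P \Sigma_\mu P^T$ and $D_\nu := P \Sigma_\nu P^T$ are both diagonal, with diagonal entries $\sigma_i(\mu)^2$ and $\sigma_i(\nu)^2$. The Brenier map between two centered Gaussians is linear, $T(x) = Ax$, with $A$ symmetric positive semidefinite satisfying $A\Sigma_\mu A = \Sigma_\nu$; in the eigenbasis of $P$ the matrix $A$ is diagonal with entries $\sigma_i(\nu)/\sigma_i(\mu)$. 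Being $1$-Lipschitz is equivalent to $\|A\|_{op} \leq 1$, i.e.\ $\sigma_i(\nu) \leq \sigma_i(\mu)$ for every $i$, which by orthogonality of $P$ amounts to $\Sigma_\nu \leq \Sigma_\mu$ in the Loewner order.

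For part (ii) the strategy is to build explicitly a martingale coupling of $\nu$ and $\mu$, from which $\nu \leq_c \mu$ follows via Strassen's theorem. Let $(B_t)_{t\in[0,1]}$ be a standard $d$-dimensional Brownian motion with $B_0 = 0$, so that $B_1 \sim \mu$. I would consider the $\R^d$-valued martingale $M_t := \esp[T(B_1) \mid \mc F_t^B]$, which satisfies $M_0 = \esp[T(B_1)] = 0$ (centering of $\nu$) and $M_1 = T(B_1) \sim \nu$. The Clark-Ocone formula provides the representation $M_t = \int_0^t \sigma_s\, dB_s$ with $\sigma_s := (P_{1-s}\nabla T)(B_s)$, where $P_t$ denotes the heat semigroup. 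Writing $T = \nabla F$ with $F$ convex, $\nabla T = \nabla^2 F$ is a.e.\ symmetric positive semidefinite with operator norm $\leq 1$ (this is precisely the $1$-Lipschitz hypothesis), and both properties are preserved by the heat semigroup. Hence $\sigma_s$ takes values in the symmetric positive semidefinite matrices with $\sigma_s \leq I$ in the Loewner order.

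Next, on an enlarged probability space, introduce an independent Brownian motion $W$ on $\R^d$ and set
\[
Z_t := M_t + \int_0^t (I - \sigma_s^2)^{1/2}\,dW_s, \qquad t \in [0,1].
\]
The matrix-valued quadratic variation of $Z$ is
\[
\langle Z \rangle_t \;=\; \int_0^t \sigma_s^2\,ds + \int_0^t (I - \sigma_s^2)\,ds \;=\; t\, I,
\]
so L\'evy's characterization ensures that $Z$ is itself a standard Brownian motion, and in particular $Z_1 \sim \mc N(0, I_d) = \mu$. Writing $Z_1 = T(B_1) + \zeta$ with $\zeta := \int_0^1 (I - \sigma_s^2)^{1/2}\,dW_s$, one observes that conditionally on $B$ the random vector $\zeta$ is a centered Gaussian, hence $\esp[\zeta \mid B] = 0$ and in particular $\esp[\zeta \mid T(B_1)] = 0$. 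It follows that $\esp[Z_1 \mid T(B_1)] = T(B_1)$, meaning that $(T(B_1), Z_1)$ is a martingale coupling of $\nu$ and $\mu$; Strassen's theorem then yields $\nu \leq_c \mu$.

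The main technical hurdle is that $T = \nabla F$ is only assumed $1$-Lipschitz, so $\nabla^2 F$ exists merely almost everywhere (Alexandrov's theorem), and the Clark-Ocone identity with the precise form $\sigma_s = P_{1-s}(\nabla^2 F)(B_s)$ requires a bit of regularity. The natural remedy is to approximate $F$ by smooth convex functions $F_\varepsilon$ (for instance by Gaussian convolution) still satisfying $\nabla^2 F_\varepsilon \leq I$, to prove the conclusion $\nu_\varepsilon \leq_c \mu$ for $\nu_\varepsilon := (\nabla F_\varepsilon)_\# \mu$ via the construction above, and then pass to the limit $\varepsilon \to 0$ using the stability of the convex order under weak convergence of probability measures with uniformly bounded second moments.
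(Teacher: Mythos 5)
Part (i) follows essentially the paper's route: the paper records the closed form $T(x)=\Sigma_\nu^{1/2}\Sigma_\mu^{-1/2}x$ in the commuting case and leaves the equivalence $\|T\|_{\mathrm{op}}\le 1 \Leftrightarrow \Sigma_\nu\le\Sigma_\mu$ to the reader; your simultaneous diagonalization argument is the same computation spelled out.

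For part (ii) you genuinely diverge from the paper. The authors simply cite Harg\'e \cite{harge2001inequalities} and omit the details, whereas you give a self-contained probabilistic proof: you Clark--Ocone--decompose the martingale $M_t=\esp[T(B_1)\mid\mathcal F_t^B]$ to get an integrand $\sigma_s=(P_{1-s}\nabla^2 F)(B_s)$ that is symmetric with $0\le\sigma_s\le I_d$ (using that heat-semigroup averaging preserves the Loewner constraints inherited from $0\le\nabla^2F\le I_d$), then superpose the independent noise $\int_0^t(I-\sigma_s^2)^{1/2}\,dW_s$ so that $Z$ has quadratic variation $tI_d$ and is Brownian by L\'evy, and finally note that $\esp[Z_1\mid T(B_1)]=T(B_1)$ because the $W$-integral is conditionally centered given $B$. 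This yields a martingale coupling of $\nu$ and $\mu$ and Strassen gives $\nu\le_c\mu$. The argument is correct, including the mollification step to handle the a.e.\ differentiability of $T$ (just note that the approximants $\nu_\varepsilon$ need not be exactly centered, so one obtains $\nu_\varepsilon\le_c\mathcal N(m_\varepsilon,I_d)$ with $m_\varepsilon=\esp[T_\varepsilon(B_1)]\to 0$, and passes to the limit). Your route is more constructive and directly exhibits a martingale coupling, which fits nicely with the martingale-theoretic viewpoint of the paper; Harg\'e's original argument proceeds instead by an interpolation/monotonicity argument along the heat flow without producing an explicit coupling. Both buy the same statement, but yours gives slightly more.
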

\begin{proof}
\begin{enumerate}
\item In this commuting case, $T(x) = \Sigma_\nu^{1/2}\Sigma_\mu^{-1/2} x$, $x\in \R^d$, so that the conclusion is straightforward.
\item The second point is a straightforward adaptation of  \cite{harge2001inequalities}. Details are left to the reader.
\end{enumerate}
\end{proof}

\end{document}